\newcommand{\del}{\partial}
\newcommand{\CL}{\mathcal{L}}
\newcommand{\R}{\mathbb{R}}
\renewcommand{\S}{\mathbb{S}}
\newcommand{\Scal}{\operatorname{R}}
\newcommand{\C}{\operatorname{\mathcal{C}}}
\newcommand{\Ric}{\operatorname{Rc}}
\renewcommand{\div}{\operatorname{div}}
\newcommand {\mc} {H}
\renewcommand {\H} {\mathcal{H}}
\newcommand {\spt} {\operatorname{spt}}
\newtheorem{theorem}{Theorem}[section]
\newtheorem{lemma}[theorem]{Lemma}
\newtheorem{definition}[theorem]{Definition}
\newtheorem{corollary}[theorem]{Corollary}
\newtheorem{proposition}[theorem]{Proposition}
\title[Stable CMC surfaces]{On large volume preserving stable CMC surfaces in initial data sets}
\author{Michael Eichmair \and Jan Metzger}
\address{Michael Eichmair, Departement Mathematik, ETHZ, 8092 Z\"urich, Switzerland}
\email{michael.eichmair@math.ethz.ch}
\address{Jan Metzger, Universit\"at Potsdam, Institut f\"ur Mathematik, Am
Neuen Palais 10, 14469 Potsdam, Germany}
\email{jan.metzger@uni-potsdam.de}
\thanks{Michael Eichmair gratefully acknowledges the support of the NSF grant DMS-0906038.}
\begin{document}

\begin{abstract} Let $(M, g)$ be a complete $3$-dimensional
  asymptotically flat manifold with \emph{everywhere positive} scalar
  curvature. We prove that, given a compact subset $K \subset M$, all
  volume preserving stable constant mean curvature surfaces of
  sufficiently large area will avoid $K$. This complements the results
  of G.~Huisken and S.-T.~Yau \cite{Huisken-Yau:1996} and of J.~Qing and
  G.~Tian \cite{Qing-Tian:2007} on the uniqueness of large volume
  preserving stable constant mean curvature spheres in initial data
  sets that are asymptotically close to Schwarzschild with mass $m >
  0$. The analysis in \cite{Huisken-Yau:1996} and
  \cite{Qing-Tian:2007} takes place in the asymptotic regime of
  $M$. Here we adapt ideas from the minimal surface proof of the
  positive mass theorem \cite{Schoen-Yau:1979-pmt1} by R.~Schoen and
  S.-T.~Yau and develop geometric properties of volume preserving
  stable constant mean curvature surfaces to handle surfaces that run
  through the part of $M$ that is far from Euclidean.
\end{abstract}

\maketitle

\section {Introduction}

A classical result in the calculus of variations is that the
isoperimetric regions of $\R^n$ are precisely the round balls. A surface which is critical for the associated variational problem has constant mean curvature. In 1951, H.~Hopf
proved that immersed two-spheres of constant mean curvature in $\R^3$
are necessarily round, and then, in 1958, A. D. Alexandrov showed that so are compact embedded constant mean curvature hypersurfaces in
$\R^n$.  Note that
translations preserve such surfaces and fully
account for their non-uniqueness, once we fix their area. We recommend the wonderful article \cite{Osserman:1978} by
R. Osserman for a survey on the isoperimetric problem.

In 1988, G. Huisken and S.-T. Yau made the crucial observation that in
a large and physically important class of asymptotically flat manifolds, this ambiguity disappears: they proved that certain large, volume preserving stable
constant mean curvature spheres exist and are \emph{unique} (given their area) within a large class of surfaces, including all nearby ones. Their insight has started a long line of remarkable
research linking the geometric properties of such surfaces to the
physical properties of the asymptotically flat manifold when viewed as
(time-symmetric) initial data for the Einstein equations.

The present paper complements the existing results, which we discuss
in more detail below, on the uniqueness of large, volume preserving
stable constant mean curvature surfaces in initial data sets. We set the stage with the relevant definitions.

\begin{definition} \label{def:initial_data_sets} An initial data set $(M, g)$ is a connected complete Riemannian $3$-manifold, possibly with compact boundary, such that there exists a bounded open set $U \subset M$ so that $M \setminus U \cong_x \R^3 \setminus B(0, \frac{1}{2})$ and such that in the coordinates induced by $x = (x_1, x_2, x_3)$ we have that $$ r |g_{ij} - \delta_{ij}| + r^2 |\partial_k g_{ij}| + r^3 |\partial^2_{k l} g_{ij}| \leq C$$ where $r:= \sqrt {x_1^2 + x_2^2 + x_3^2}$. The boundary $\partial M$ of $M$ -- if non-empty -- is a minimal surface, and we assume that there are no other compact minimal surfaces in $M$. The boundary of $M$ is called the horizon of $(M, g)$. Given $m \in [0, \infty)$ and an integer $k\geq0$ we say that an initial data set is $\C^k$-asymptotic to Schwarzschild of mass $m$ if 
\begin{eqnarray} \label{eqn:decaygm}
\sum_{l=0}^k  r^{2 + l}|\partial^l(g - g_m)_{ij}| \leq C
\end{eqnarray} where $(g_m)_{ij} = (1 + \frac{m}{2|x|})^4 \delta_{ij}$. 
\end{definition}

We do not assume here that an initial data set has non-negative scalar
curvature. For convenience, we extend $r$ as a smooth regular function to the entire initial data set $(M, g)$ such that $r(U) \subset [0, 1)$. Note that if $(M, g)$ is an initial data set that is
$\C^1$-asymptotic to Schwarzschild with mass $m$, then $m$ equals the
ADM-mass of $(M, g)$. 

\begin{definition} [cf. \cite{Barbosa-DoCarmo-Eschenburg}] Let $(M,
  g)$ be a Riemannian $3$-manifold and let $\Sigma \subset M$ be a
  complete embedded two-sided boundaryless surface. Let $\nu$ be a smooth
  unit normal vector field of $\Sigma$. The mean curvature
  $\mc$ of $\Sigma$ (with respect to $\nu$) is defined as the
  tangential divergence of $\nu$. We say that $\Sigma$ has constant
  mean curvature if $\mc$ is constant along $\Sigma$. We say that a
  constant mean curvature surface $\Sigma \subset M$ is volume
  preserving stable if
  \begin{multline*}
    \int_\Sigma (|h|^2 + \Ric(\nu, \nu)) u^2 d
    \H^2_g \leq \int_\Sigma |\bar \nabla u|^2 d \H^2_g
    \\ \text { for every
    } u \in \C^1_c(\Sigma) \text { with } \int_\Sigma u d \H^2_g= 0.
  \end{multline*}
  Here, $\Ric$ is the ambient Ricci tensor, $\bar \nabla$ is the
  tangential gradient along $\Sigma$, and $h$ denotes the second
  fundamental form tensor of $\Sigma$. We say that a constant mean
  curvature surface $\Sigma \subset M$ is strongly stable if $$
  \int_\Sigma (|h|^2 + \Ric(\nu, \nu)) u^2 d\H^2_g \leq \int_\Sigma
  |\bar \nabla u|^2d \H^2_g \text { for every } u \in
  \C^1_c(\Sigma).$$
\end{definition}

We caution the reader that in \cite{Huisken-Yau:1996, Qing-Tian:2007}, volume preserving stable constant mean curvature surfaces are referred to as (weakly) stable. The distinction between volume preserving stable and strongly stable constant mean curvature surfaces is important in this paper.  

The notions of constant mean curvature and volume preserving stability extend to isometrically immersed surfaces in $(M, g)$ that are two-sided in the sense that the surface has a global ``unit normal" in the pull-back of the tangent bundle of $M$ to the surface. Note that non-zero constant mean curvature is a local notion for a connected surface, and two-sidedness is a consequence.   

Volume preserving stable constant mean curvature surfaces are precisely the stable critical points for volume-constrained area minimization in $(M, g)$, i.e., the isoperimetric problem \cite{Barbosa-DoCarmo-Eschenburg}. Arguably, they are the surfaces that generalize the variational properties of the outermost minimal surface in $(M, g)$ -- the (apparent) horizon -- most naturally. In \cite{Christodoulou-Yau:1988} it was shown that in initial data sets with non-negative scalar curvature, a connected volume preserving stable constant mean curvature \emph{sphere} has non-negative Hawking mass. In their seminal paper \cite{Huisken-Yau:1996} (see also the announcement in \cite[p. 14]{Christodoulou-Yau:1988}) G. Huisken and S.-T. Yau showed that the exterior of a large compact subset of an initial data set $(M, g)$ that is $\C^4$-asymptotic to Schwarzschild with mass $m>0$ is foliated by volume preserving stable constant mean curvature spheres. These spheres become rounder as they diverge to infinity, and their centers of mass with respect to the Euclidean coordinate system at infinity converge to a unique point in the limit: the Huisken-Yau ``geometric center of mass" of the initial data set. The existence of a constant mean curvature foliation has also been shown by R. Ye \cite{Ye:1996} using a different approach. Importantly, G. Huisken and S.-T. Yau were able to show that the volume preserving stable constant mean curvature spheres are unique within a large class of surfaces:

\begin{theorem} [\cite{Huisken-Yau:1996}] \label{thm:Huisken-Yau} Let $(M, g)$ be $\mathcal{C}^4$-asymptotic to Schwarzschild with mass $m > 0$ and let $q \in (\frac{1}{2},1]$ be given. There exists $\mc_0> 0$ depending only on $m$ and $C$ as in
\eqref{eqn:decaygm} and $q$  such that for every $\mc \in (0, \mc_0)$ there is a unique volume preserving stable constant mean curvature sphere of mean curvature $\mc$ that contains the ball $B_{\mc^{-q}}$. These constant mean curvature spheres foliate the exterior of a compact subset of $M$. 
\end{theorem}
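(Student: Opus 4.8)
The plan is to combine an existence argument by perturbation off the exact Schwarzschild metric with a priori position and shape estimates for large volume preserving stable constant mean curvature spheres; uniqueness then follows from a quantitative invertibility of the volume preserving Jacobi operator modulo its translational near-kernel. The governing length scale is fixed by the fact that in Schwarzschild of mass $m$ the coordinate sphere of area radius $\rho$ centered at the origin has mean curvature $\mc = 2\rho^{-1} + O(m\rho^{-2})$, so a surface of mean curvature $\mc$ lives at scale $\rho \approx 2\mc^{-1}$; since $q \in (\tfrac12,1]$ we have $\mc^{-q} \leq \mc^{-1}$ while $\mc^{1-2q} \to \infty$ as $\mc \to 0$, which is the sort of compatibility between the enclosed ball $B_{\mc^{-q}}$ and the scale $\mc^{-1}$ that the hypothesis on $q$ is meant to supply.

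\emph{A priori estimates.} Let $\Sigma$ be a volume preserving stable constant mean curvature sphere with small mean curvature $\mc$ that contains $B_{\mc^{-q}}$. Testing the stability inequality with a constant minus its average and combining with the Gauss equation and Gauss--Bonnet, I would first bound $\int_\Sigma |\hcirc|^2\,\dmu$ and $\int_\Sigma |\Ric(\nu,\nu)|\,\dmu$ and show that the area radius satisfies $\rho(\Sigma)\,\mc \to 2$. Next I would exploit that the restrictions to $\Sigma$ of the ambient coordinate functions $x_i$ are \emph{approximate} Jacobi fields, since Euclidean translations are near-isometries of the asymptotic region: testing stability against suitably renormalized combinations of $1$ and the $x_i$ upgrades the integral bound to $\sup_\Sigma |\hcirc| = o(\mc)$ and forces the Euclidean center $a(\Sigma) := |\Sigma|^{-1}\int_\Sigma x\,\dmu$ to satisfy $|a(\Sigma)| = o(\rho(\Sigma))$, and in fact $|a(\Sigma)|$ bounded independently of $\mc$. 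Bootstrapping through the constant mean curvature equation with the asymptotics \eqref{eqn:decaygm} and elliptic regularity, $\Sigma$ is then a $\C^{2,\alpha}$ normal graph of size $o(\rho)$ over the coordinate sphere of radius $2\mc^{-1}$ centered at $a(\Sigma)$. Here $m>0$ is essential: it is the $m$-term in the expansion of $g$ that makes the centering functional non-degenerate, and without it off-center spheres could not be excluded.

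\emph{Existence and foliation.} The coordinate spheres centered at the origin in exact Schwarzschild have constant mean curvature. I would linearize the mean curvature operator there: on functions of mean zero the linearization is $L u = \bar\Delta u + (|h|^2 + \Ric(\nu,\nu))\,u$, whose kernel is spanned to leading order by the three lowest spherical harmonics coming from translations. A Lyapunov--Schmidt reduction -- solve orthogonally to this $3$-dimensional near-kernel by the implicit function theorem, then solve the reduced $3$-dimensional equation using the $m>0$ term, which provides the needed non-degeneracy -- yields for every small $\mc$ a constant mean curvature sphere $\Sigma_\mc$. By the a priori estimates it is volume preserving stable and contains $B_{\mc^{-q}}$; smooth dependence on $\mc$, disjointness, and exhaustion then show that $\{\Sigma_\mc\}_{\mc \in (0,\mc_0)}$ foliates the complement of a compact subset of $M$. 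Alternatively one runs volume preserving mean curvature flow from large coordinate spheres as in \cite{Huisken-Yau:1996}, or uses the implicit function theorem as in \cite{Ye:1996}, with the curvature estimates ensuring convergence.

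\emph{Uniqueness.} Suppose $\Sigma$ and $\Sigma_\mc$ are volume preserving stable constant mean curvature spheres of the same mean curvature $\mc$ both containing $B_{\mc^{-q}}$. By the a priori estimates, both are $\C^{2,\alpha}$ normal graphs of size $o(\mc^{-1})$ over coordinate spheres of radius $2\mc^{-1}$ with uniformly bounded centers. Writing $\Sigma$ as a normal graph of a function $w$ over $\Sigma_\mc$ and subtracting the identical constant mean curvature equations, $w$ solves $-L_{\Sigma_\mc} w = Q(w)$, with $Q$ quadratically small in $w$ and its derivatives up to second order. The constant mode of $w$ is controlled because $\Sigma$ and $\Sigma_\mc$ have equal mean curvature; the translational modes along $x_i|_{\Sigma_\mc}$ are controlled by the centering estimate together with the Lyapunov--Schmidt reduction; and on the remaining modes the spectral gap of $L_{\Sigma_\mc}$, which is of order $\mc^2$, dominates the quadratically small error $Q(w)$. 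An absorption argument then forces $w \equiv 0$, so $\Sigma = \Sigma_\mc$. I expect the main obstacle to be the second stage of the a priori estimates -- extracting the roundness decay and, above all, the uniform bound on $a(\Sigma)$ from the stability inequality. This requires choosing test functions precisely enough that the $O(m\rho^{-2})$ contribution of the Schwarzschild term dominates both the error from the non-Schwarzschild part of $g$ and the error from $\Sigma$ being only approximately a coordinate sphere; it is exactly here that the positive lower bound on $m$ is used.
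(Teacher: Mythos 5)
This theorem is quoted from \cite{Huisken-Yau:1996}; the paper you were given contains no proof of it, so the only meaningful comparison is with that reference, and your sketch follows essentially the same strategy (stability-based integral and position estimates exploiting $m>0$, existence by constructing the foliation, uniqueness by writing one sphere as a graph over the other and using the spectral gap of order $\mc^2$). One caveat: your claim that the stability argument yields $|a(\Sigma)|$ \emph{bounded independently of} $\mc$ is really the content of the later refinement of Qing--Tian (Theorem \ref{thm:Qing-Tian}); Huisken--Yau only obtain a sublinear center estimate $|a(\Sigma)| = o(\rho(\Sigma))$ with a rate tied to $q$, which is precisely why their uniqueness statement requires the surface to contain the large ball $B_{\mc^{-q}}$ with $q>\tfrac12$ rather than a fixed compact set, and your bookkeeping for the role of $q$ should be anchored there rather than asserted as an absolute bound.
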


J. Qing and G. Tian strengthened the uniqueness result of G. Huisken and S.-T.~Yau as follows:

\begin{theorem} [\cite{Qing-Tian:2007}] \label{thm:Qing-Tian} Let $(M, g)$ be $\mathcal{C}^4$-asymptotic to Schwarzschild with mass $m > 0$. There exist $r_0 \geq 1$ and $\mc_0 >0$ depending only on $m$ and $C$ as in \eqref{eqn:decaygm} such that for every $\mc \in (0, \mc_0)$ there exists a unique volume preserving stable constant mean curvature sphere of mean curvature $\mc$ in $(M,g)$ that contains the ball $B_{r_0}$. \end{theorem}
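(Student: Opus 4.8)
The plan is to deduce Theorem~\ref{thm:Qing-Tian} from Theorem~\ref{thm:Huisken-Yau} by establishing a priori estimates. Fix $q = \tfrac34$ and let $H_0^{HY}$ be the threshold from Theorem~\ref{thm:Huisken-Yau} for this $q$. The Huisken--Yau sphere of mean curvature $H \in (0, H_0^{HY})$ contains $B_{H^{-3/4}}$, hence $B_{r_0}$ as soon as $H^{-3/4} \ge r_0$, so the existence part is immediate. For uniqueness it suffices to produce $r_0 \ge 1$ and $H_0 \in (0, H_0^{HY}]$, depending only on $m$ and $C$, such that every volume preserving stable constant mean curvature sphere $\Sigma$ of mean curvature $H \in (0, H_0)$ whose enclosed region contains $B_{r_0}$ satisfies $\min_\Sigma r \ge H^{-3/4}$: two such spheres then both contain $B_{H^{-3/4}}$ and hence agree by Theorem~\ref{thm:Huisken-Yau}. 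Observe that $\min_\Sigma r \ge r_0$ holds automatically, since $\Sigma$ bounds a region containing the open ball $B_{r_0}$, and that on such a $\Sigma$ the metric $g$ is $\C^4$-close to the Schwarzschild metric $g_m$ by \eqref{eqn:decaygm}, so Schwarzschild expansions may be used freely near $\Sigma$. In fact I would prove the stronger bound $\min_\Sigma r \ge c(m, C)\, H^{-1}$.

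The first step is rough a priori control. Comparing $\Sigma$ at its innermost and outermost points with the coordinate spheres $\{r = t\}$, whose outward mean curvature is $\tfrac2t(1 + O(m/t))$, the maximum principle gives $\max_\Sigma r \ge (2 - o(1))H^{-1}$ and $\min_\Sigma r \le (2 + o(1))H^{-1}$. Next, test the volume preserving stability inequality against the three components $F_1, F_2, F_3$ of a conformal parametrization $F \colon \Sigma \to \S^2 \subset \R^3$ which has been balanced by a conformal automorphism of $\S^2$ so that $\int_\Sigma F \, d\H^2 = 0$ (Hersch's lemma); each $F_i$ is then admissible. Summing over $i$, using $\sum_i F_i^2 \equiv 1$ and $\sum_i \int_\Sigma |\bar\nabla F_i|^2 \, d\H^2 = 2\,|\S^2| = 8\pi$, one obtains
\[
  \int_\Sigma \bigl( |\hcirc|^2 + \tfrac12 H^2 + \Ric(\nu, \nu) \bigr) \, d\H^2 \;\le\; 8\pi .
\]
Expressing $\Ric(\nu, \nu)$ through the Gauss equation and invoking Gauss--Bonnet, this reads $\tfrac12 \int_\Sigma |\hcirc|^2 + \tfrac34 H^2 |\Sigma| + \tfrac12 \int_\Sigma \Scal \le 12\pi$; the scalar curvature term is controlled because $|\Scal| = |\Scal - \Scal(g_m)| = O(r^{-4})$ on $\Sigma$ and $\Sigma$ obeys the usual area estimates. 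Hence $\int_\Sigma |\hcirc|^2 \le C$ and $|\Sigma| \le C\, H^{-2}$; with the constant mean curvature equation, bounded total curvature and area force a diameter bound, so $\max_\Sigma r \le C\, H^{-1}$. Finally, a compactness argument shows that $\Sigma$ is, after rescaling the metric by $H$, $\C^{2,\alpha}$-close to a round Euclidean sphere $\partial B_\rho(a)$ with $\rho = (2 + o(1))H^{-1}$ and some center $a = a(\Sigma) \in \R^3$: along any sequence $H \to 0$ the rescaled surfaces subconverge to a closed volume preserving stable constant mean curvature surface of bounded total curvature in Euclidean space, which by Alexandrov's theorem is a round sphere. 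Carrying this roundness out rigorously is the technically heaviest step and parallels the a priori analysis of \cite{Huisken-Yau:1996, Qing-Tian:2007}.

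It remains to bound the center, $|a(\Sigma)| \le C(m, C)$; granting this, $\min_\Sigma r \ge \rho - 2|a(\Sigma)| \ge H^{-1} \ge H^{-3/4}$ once $H$ is small, and the reduction above completes the proof. This is the step where $m > 0$ is decisive: for $m = 0$ the statement fails, as Euclidean translates of a large coordinate sphere show. I would argue by a Lyapunov--Schmidt reduction for the constant mean curvature equation near the three-parameter family $\{\partial B_\rho(a)\}_{a \in \R^3}$. The $\ell = 1$ spherical harmonics form the approximate cokernel of the linearized operator --- they generate the Jacobi fields of a Euclidean round sphere --- and the reduced obstruction map $\Phi \colon \R^3 \to \R^3$ takes, to leading order, the form $\Phi(a) = \rho^{-2}\bigl(c\, m\, a + b\bigr) + (\text{lower order})$, where $c > 0$ is universal and $b = b(g, \rho)$ is bounded uniformly in $\rho$: indeed $\partial B_\rho(0)$ is rotationally symmetric in $g_m$, so its mean curvature there is exactly constant, and the remaining $\ell = 1$ defect comes only from $g - g_m = O(r^{-2})$, which by \eqref{eqn:decaygm} contributes at order $\rho^{-2}$. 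A constant mean curvature surface near the family corresponds to a zero of $\Phi$; since $m > 0$ makes $a \mapsto c\, m\, a$ invertible, that zero satisfies $|a| \le C(m, C)$, and $\Sigma$ is close to it. The two principal obstacles are therefore the a priori roundness of $\Sigma$ and the extraction of the bound on $|a|$ from this degenerate linearization --- the latter being exactly where \cite{Qing-Tian:2007} improves the requirement $\Sigma \supseteq B_{H^{-q}}$ of \cite{Huisken-Yau:1996} to $\Sigma \supseteq B_{r_0}$.
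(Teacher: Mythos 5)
This statement is quoted from \cite{Qing-Tian:2007}; the paper under review does not prove it (its own contribution in this direction is Theorem \ref{thm:Huisken-Yau-Qing-Tian_improved}, proved in \cite{Eichmair-Metzger:2012hdiso}), so your proposal can only be judged on its own terms. Your overall strategy --- derive a priori estimates forcing any volume preserving stable CMC sphere of small mean curvature $\mc$ enclosing $B_{r_0}$ to be nearly centered, then invoke Theorem \ref{thm:Huisken-Yau} --- is indeed the right high-level plan, and your preliminary steps are sound: the maximum-principle bounds $\min_\Sigma r \lesssim \mc^{-1} \lesssim \max_\Sigma r$, the Hersch/Christodoulou--Yau test of stability giving $\int_\Sigma |\hcirc|^2 \le C$ and $\H^2_g(\Sigma) \le C\mc^{-2}$ (the scalar curvature term being absorbed via Lemma \ref{lem:surface_integral_decay}), and the Simon-type diameter bound $\operatorname{diam}(\Sigma) \le C\sqrt{\H^2_g(\Sigma)\int_\Sigma \mc^2}\le C\mc^{-1}$.

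The genuine gap is at the decisive step, the exclusion of far-off-center spheres, which is precisely the content of \cite{Qing-Tian:2007}. Your compactness argument, even granting it, only yields that $\Sigma$ is close to a sphere $S_\rho(a)$ with $\rho \sim 2\mc^{-1}$ and $|a| \le (1+o(1))\rho$; it does not rule out $|a|/\rho \to 1$, i.e.\ a sphere whose near point hovers at bounded distance from $B_{r_0}$. Your Lyapunov--Schmidt expansion $\Phi(a) = \rho^{-2}(c\,m\,a + b) + \text{l.o.t.}$ with $b$ bounded is the expansion valid in the nearly centered regime $|a| \ll \rho$ --- exactly the regime already covered by Theorem \ref{thm:Huisken-Yau} with $q > \tfrac12$. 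In the regime $|a| \sim \rho$ the sphere passes through the strongly curved core of $(M,g)$, where $g - g_m = O(r^{-2})$ is not small, the contribution to the $\ell=1$ obstruction concentrates near the near point and does not have the stated form, and moreover the curvature estimate $|h_\Sigma| \le c/\max\{1,r\}$ degenerates to a mere bound there, so after rescaling by $\mc$ the roundness (hence the graphicality needed for the reduction) is not controlled near the origin. Asserting the global form of $\Phi$ in this regime assumes the conclusion. Closing this gap requires a separate argument for off-center configurations --- in \cite{Qing-Tian:2007} a delicate splitting of integral identities into near and far regions of the sphere; in \cite{Eichmair-Metzger:2012hdiso} a different mechanism --- and without it your proof establishes nothing beyond Theorem \ref{thm:Huisken-Yau}.
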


These theorems have been partially extended to more general asymptotic expansions of the metric at infinity in \cite[Theorem 3]{Huang:2010} and \cite[Theorem 1.5]{Ma:2011}. 

In this paper, we set out to complete the description of large, volume preserving stable constant mean curvature surfaces in initial data sets with positive scalar curvature, complementing the remarkable works of \cite{Huisken-Yau:1996} and \cite{Qing-Tian:2007}. Our main results here are as follows: 

\begin{theorem}\label{thm:volume_stable_center} Let $(M, g)$ be an initial data set with non-negative scalar curvature. Assume that the scalar curvature is positive in a neighborhood of a non-empty compact subset $K \subset M$. For every $\Theta > 0$ there exists a constant $A = A(M, g, \Theta, K)>0$ such that there are no connected closed volume preserving stable constant mean curvature surfaces $\Sigma \subset M$ with $\H^2_g(\Sigma \cap B_\sigma) \leq \Theta \sigma^2$ for all $\sigma \geq 1$ and with $\H^2_g(\Sigma) \geq A$ such that $\Sigma \cap K \neq \emptyset$.  
\end{theorem}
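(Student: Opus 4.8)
The plan is to argue by contradiction. Suppose there is a sequence $\Sigma_i \subset M$ of connected closed volume preserving stable CMC surfaces with $\H^2_g(\Sigma_i \cap B_\sigma) \leq \Theta \sigma^2$ for all $\sigma \geq 1$, with $\H^2_g(\Sigma_i) \to \infty$, and with $\Sigma_i \cap K \neq \emptyset$. The area growth bound is exactly the hypothesis needed to extract, via a standard compactness argument for stable CMC surfaces (curvature estimates away from the mean curvature scale, or a monotonicity/Schoen-type estimate adapted to the volume preserving stable setting), a subsequential limit. The first point to establish is a uniform curvature bound: on any fixed compact region, the second fundamental forms $|h_i|$ of the $\Sigma_i$ are bounded independently of $i$. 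This should follow from the volume preserving stability inequality combined with the area growth hypothesis by a blow-up argument: if $|h_i|$ blew up at points of a fixed compact set, one rescales to get, in the limit, a complete nonzero-area stable (in the strong sense, since the volume constraint disappears under blow-up — the test functions can be taken with small support and the Lagrange multiplier term is controlled) minimal surface in $\R^3$ with Euclidean area growth, hence a plane by the Schoen--do Carmo--Peng / Fischer-Colbrie--Schoen theorem, contradicting that the curvature was exactly $1$ at the origin after rescaling. Since the mean curvatures $\mc_i$ are bounded (e.g.\ because $\Sigma_i$ meets $K$ and has controlled area, one derives $\mc_i \to 0$ or at least $\mc_i$ bounded from an integrated stability/Gauss equation estimate together with area growth), the surfaces $\Sigma_i$ subconverge smoothly on compact subsets of $M$, with finite multiplicity, to a complete embedded surface $\Sigma_\infty$ (possibly empty or a union of sheets) passing through $K$, of polynomial area growth, and satisfying the strong stability inequality (the volume constraint passes, in the limit, to an honest strong stability because compactly supported variations can be balanced by pushing mass out to infinity along the divergent sheets; alternatively $\mc_\infty = 0$ so $\Sigma_\infty$ is a strongly stable minimal surface).

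The heart of the argument, adapting Schoen--Yau \cite{Schoen-Yau:1979-pmt1}, is then the following: $\Sigma_\infty$ is a complete noncompact (it cannot be compact, as a closed minimal surface would violate the no-compact-minimal-surfaces hypothesis in Definition 1.1, while a closed nonzero-CMC surface bounding a region whose volume is controlled forces, via stability and Gauss--Bonnet, a contradiction with $\mc_\infty$ bounded and area $\to \infty$) strongly stable surface. If $\mc_\infty = 0$: a complete noncompact strongly stable minimal surface $\Sigma_\infty$ in a $3$-manifold of nonnegative scalar curvature that is positive somewhere on $\Sigma_\infty$ cannot exist — this is precisely the Schoen--Yau argument. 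One uses the stability inequality with $u \equiv 1$ cut off (valid since the area growth is at most quadratic, so a logarithmic cutoff has vanishing Dirichlet energy) together with the rewriting
\begin{equation*}
|h|^2 + \Ric(\nu,\nu) = \half |h|^2 + \half \mc^2 + \half(\Scal - 2 K_{\Sigma_\infty})
\end{equation*}
coming from the Gauss equation, where $K_{\Sigma_\infty}$ is the intrinsic Gauss curvature and $\Scal$ the ambient scalar curvature. Integrating the stability inequality against the cutoff of $u\equiv 1$ and using Gauss--Bonnet (together with the quadratic area bound to control $\int K_{\Sigma_\infty}$ and the topology) forces $\int_{\Sigma_\infty} \Scal\, d\H^2 \leq 0$ and in fact $\Scal \equiv 0$ along $\Sigma_\infty$, contradicting that $\Sigma_\infty$ meets $K$ where $\Scal > 0$.

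The main obstacle — and the place where the most care is needed — is making rigorous the passage from the \emph{volume preserving} stability of the $\Sigma_i$ to a usable inequality for the limit, since a priori one only controls test functions of mean zero, and the Lagrange-multiplier term $\int \mc \cdot (\text{something}) $ in the second variation is not obviously of the right sign. The strategy is to show that the unconstrained stability deficit is at most one-dimensional and is controlled by a single function (essentially the normal speed of a foliation, or $\la \del_j, \nu\ra$-type coordinate functions), so that for the cut-off constant function the failure of strong stability is quantitatively small — of lower order than the $\int \Scal$ term we are trying to produce — when the area is large. This requires the geometric structure theory of volume preserving stable CMC surfaces developed in the body of the paper (area bounds, diameter estimates, the fact that such surfaces are close to coordinate spheres in the asymptotic region, and a dichotomy: either the surface is large and mostly in the asymptotic region, or it has a definite-size piece in a region of controlled geometry where strong-stability-type estimates hold). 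Handling the case $\mc_i \not\to 0$ — where the limit might genuinely be a nonzero CMC surface rather than a minimal one — is the other delicate point; here one argues that a large-area volume preserving stable CMC surface through $K$ with mean curvature bounded away from zero is impossible by an isoperimetric/Heintze--Karcher-type comparison, or by observing that the enclosed region's volume grows like area${}^{3/2}$ while CMC with $\mc$ bounded below forces linear-in-area volume, a contradiction.
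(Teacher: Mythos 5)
Your overall architecture matches the paper's: contradiction, curvature estimates and compactness to extract a limit, the observation that the volume constraint can be relaxed by pushing a mean-zero correction out to infinity (this is exactly the paper's $\chi_\epsilon$ construction, and your sketch of it is correct -- the elaborate ``one-dimensional stability deficit'' machinery you propose in your last paragraph is not needed), and finally a Schoen--Yau argument on the limit. Two of your worries dissolve immediately in the paper's treatment: the Christodoulou--Yau estimate $\mc_{\Sigma_i}^2\,\H^2_g(\Sigma_i) \leq \tfrac{64\pi}{3}$ gives $\mc_{\Sigma_i}\to 0$ outright once $\H^2_g(\Sigma_i)\to\infty$, so the case $\mc_i\not\to 0$ never arises and no Heintze--Karcher detour is needed; and the limit is then genuinely minimal, so there is no ``nonzero CMC limit'' branch.

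The genuine gap is in your final step. From strong stability tested against the logarithmic cutoff of $u\equiv 1$ and your (correct) Gauss-equation identity, what you obtain is $\int_{\Sigma_\infty}(\Scal_g + |h|^2)\,d\H^2_g \leq 2\int_{\Sigma_\infty}\kappa\,d\H^2_g$. To conclude you need $\int_{\Sigma_\infty}\kappa\,d\H^2_g \leq 0$, and ``Gauss--Bonnet together with the quadratic area bound and the topology'' does not deliver this: the quadratic area growth and $\int|h|^2<\infty$ only give $\int|\kappa|<\infty$, and the Cohn--Vossen inequality for $\Sigma_\infty\cong\mathbb{C}$ only gives $\int\kappa\leq 2\pi$, which leaves the useless bound $\int(\Scal_g+|h|^2)\leq 4\pi$. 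The paper closes this gap (Lemma \ref{lem:annular_pieces} and Proposition \ref{prop:PMT}) by a blow-down argument in the style of Fleming: the monotonicity of the density at infinity (Corollary \ref{cor:density_at_infinity}) forces the blow-down to be a plane with multiplicity $m$, the intersections of $\Sigma_\infty$ with large cylinders are then $m$ nearly-round circles each bounding a disk, their total geodesic curvatures tend to $2\pi$, and Gauss--Bonnet on these exhausting disks yields $\int_{\Sigma_\infty}\kappa\,d\H^2_g = 0$. Without some such quantitative control of the asymptotic geometry of $\Sigma_\infty$ (or the alternative conformal-coordinates test-function argument of Schoen--Yau's first proof), your argument does not reach the contradiction with $\Scal_g>0$ on $K$.
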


We show in Corollary \ref{cor:quadratic_area_growth} that the assumption of quadratic area growth is not nearly as stringent as it might appear. We can drop this assumption altogether if we assume that the scalar curvature of $(M, g)$ is \emph{everywhere} positive: 

\begin{theorem} \label{thm:volume_stable_center_positive} Let $(M, g)$ be an initial data set with everywhere positive scalar curvature. Given $r \geq 1$ there exists a constant $A_r > 0$ so that every connected closed volume preserving stable constant mean curvature surface $\Sigma \subset M$ with area $\H^2_g(\Sigma) \geq A_r$ is disjoint from the ball $B_r$.    
\end{theorem}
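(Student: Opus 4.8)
The plan is to reduce this statement to Theorem \ref{thm:volume_stable_center} by showing that, in an initial data set with everywhere positive scalar curvature, a connected closed volume preserving stable constant mean curvature surface of large area automatically satisfies a quadratic area bound $\H^2_g(\Sigma \cap B_\sigma) \leq \Theta \sigma^2$ for a universal $\Theta$ depending only on $(M,g)$. Indeed, once such a bound is in hand, we may apply Theorem \ref{thm:volume_stable_center} with $K = \bar B_r$ and $\Theta$ as produced, obtaining an area threshold $A = A(M,g,\Theta,\bar B_r)$; setting $A_r := A$ finishes the argument, because any $\Sigma$ with $\H^2_g(\Sigma)\geq A_r$ that is \emph{not} disjoint from $B_r$ would violate the conclusion of Theorem \ref{thm:volume_stable_center}. (This is presumably the content of the promised Corollary \ref{cor:quadratic_area_growth}, so one option is simply to invoke that corollary; here I sketch why such quadratic area growth should hold.)

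The quadratic area growth should come from the stability inequality combined with positivity of the scalar curvature. The standard trick: for a two-sided constant mean curvature surface one has the Gauss equation $2\,\Ric(\nu,\nu) = \Scal - 2K_\Sigma + |h|^2 - \mc^2$, where $K_\Sigma$ is the Gauss curvature of $\Sigma$ and $\Scal$ the ambient scalar curvature; substituting into the volume preserving stability inequality and testing with functions that are constant on large pieces of $\Sigma$ (after correcting to have mean zero, which costs only lower-order terms because $\mc$ is small / the corrections are controlled) yields, via Gauss--Bonnet, a bound of the form $\int_{\Sigma\cap B_\sigma}(\Scal + |h|^2 + \mc^2)\,d\H^2_g \lesssim \chi(\Sigma) + (\text{gradient terms})$. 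Using $\Scal > 0$ on the (bounded) region where $(M,g)$ is not close to Euclidean, together with the fact that in the asymptotically flat end the metric is close to flat and a monotonicity/comparison argument controls area of minimal-like surfaces, one extracts $\H^2_g(\Sigma\cap B_\sigma)\leq \Theta\,\sigma^2$ with $\Theta$ independent of $\Sigma$. The genus enters only through $\chi(\Sigma)$, but for volume preserving stable CMC surfaces in positive scalar curvature the topology is controlled (they are spheres, or at worst of bounded genus), so this term is harmless; alternatively one uses a cutoff localized to an annulus to avoid needing global topological control and obtains a differential inequality for $\sigma \mapsto \H^2_g(\Sigma\cap B_\sigma)$ that integrates to quadratic growth.

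The main obstacle I anticipate is making the quadratic area estimate genuinely uniform — i.e., ensuring $\Theta$ depends only on $(M,g)$ and not on the surface $\Sigma$, its area, its mean curvature $\mc$, or its position. The delicate points are: (i) handling the mean-zero correction in the volume preserving stability inequality without introducing constants that depend on $\H^2_g(\Sigma)$ — this requires that the test function, after correction, still concentrates its Dirichlet energy in a controlled annulus and that the correction term scales favorably as the surface grows; (ii) controlling the Euler characteristic contribution uniformly, for which the cleanest route is the annular cutoff giving a self-improving inequality $f(2\sigma) \leq f(\sigma) + c\sigma^2$ for $f(\sigma) := \H^2_g(\Sigma\cap B_\sigma)$, valid once $\sigma$ exceeds the radius beyond which $(M,g)$ is suitably close to Schwarzschild; and (iii) the base case, i.e., bounding $\H^2_g(\Sigma \cap B_1)$, which needs a local area bound for volume preserving stable CMC surfaces passing through a fixed compact set — this is itself a consequence of stability plus a monotonicity-type argument, but must be stated carefully because a priori $\mc$ could be large. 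Once these uniformity issues are settled, the reduction to Theorem \ref{thm:volume_stable_center} is immediate.
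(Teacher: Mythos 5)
Your proposal is correct and is essentially the paper's own proof, which is the one-line reduction ``use Corollary \ref{cor:quadratic_area_growth} to show that Theorem \ref{thm:volume_stable_center} applies'' with $K=\bar B_r$. Your sketch of why the quadratic area growth holds also tracks the paper's route for that corollary: the Christodoulou--Yau stability estimate (Lemma \ref{lem:Christodoulou-Yau}) gives the uniform bound $\int_\Sigma(\tfrac23\Scal_g+\mc^2)\,d\H^2_g\le \tfrac{64\pi}{3}$, which settles your worries (ii) and (iii) at once --- the genus is absorbed into that universal constant, and the positive lower bound on $\Scal_g$ near $\bar B_{r_0}$ turns it into the base-case area bound $\H^2_g(\Sigma\cap B_{r_0+\delta})\le C$ --- after which the Simon-type bending-energy estimates of Appendix \ref{sec:Willmore_Appendix} together with the curvature decay of Proposition \ref{prop:curvature_estimates_decay} yield the uniform $\Theta$.
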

The existence of a foliation of the asymptotic regime through constant
mean curvature surfaces in initial data sets that are only
$\mathcal{C}^2$-asymptotic to Schwarzschild with mass $m>0$ has been
established in \cite{Metzger:2007}. The assertions about the
uniqueness of volume preserving stable constant mean curvature
surfaces also extends to such initial data sets, see \cite{Eichmair-Metzger:2012hdiso}:

\begin{theorem} \label{thm:Huisken-Yau-Qing-Tian_improved} Theorems
  \ref{thm:Huisken-Yau} and \ref{thm:Qing-Tian} hold for initial data
  sets that are $\mathcal{C}^2$-asymptotic to Schwarzschild with mass
  $m>0$. The uniqueness statements apply to volume preserving stable
  constant mean curvature surfaces of arbitrary genus (not only
  spheres).
\end{theorem}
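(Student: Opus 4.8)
The existence of the constant mean curvature foliation of the asymptotic region under the weaker hypothesis of being $\mathcal{C}^2$-asymptotic to Schwarzschild with mass $m>0$ is exactly the content of \cite{Metzger:2007}, so the task is to reprove the uniqueness parts of Theorems \ref{thm:Huisken-Yau} and \ref{thm:Qing-Tian} in this setting and without assuming that the surfaces are spheres. The plan is to show that, once $\mc_0$ is chosen small enough in terms of $m$ and the constant $C$ in \eqref{eqn:decaygm} (and $r_0$ in terms of these data), every connected closed volume preserving stable constant mean curvature surface $\Sigma \subset M$ of mean curvature $\mc \in (0,\mc_0)$ that contains $B_{r_0}$ (respectively $B_{\mc^{-q}}$) must coincide with a leaf of the foliation. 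Since the leaves form a uniqueness class, this yields both theorems.

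The first step --- the heart of the matter --- is to establish a priori localization and roundness estimates for such a $\Sigma$, with all constants depending only on $m$ and $C$. Comparing $\Sigma$ with the leaves of the foliation, used as barriers, together with isoperimetric comparison, confines $\Sigma$ to an annulus $\{c_1\mc^{-1} \le r \le c_2\mc^{-1}\}$ and forces $\mc^2\,\H^2_g(\Sigma) \to 16\pi$ as $\mc \to 0$. Testing the volume preserving stability inequality with functions built from the ambient coordinate functions $x_i$ --- admissible after subtracting their $\Sigma$-averages so that $\int_\Sigma u\,d\H^2_g = 0$ --- and combining with a Simons-type identity then bounds the scale-invariant quantity $\mc^{-2}\int_\Sigma|\hcirc|^2\,d\H^2_g$ by a constant that tends to $0$ as $\mc\to 0$. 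This has two consequences. By the Gauss equation and Gauss--Bonnet,
\[ 2\pi\chi(\Sigma) = \int_\Sigma K_{\mathrm{amb}}\,d\H^2_g + \tfrac{1}{4}\,\mc^2\,\H^2_g(\Sigma) - \tfrac{1}{2}\int_\Sigma|\hcirc|^2\,d\H^2_g \longrightarrow 4\pi, \]
so $\chi(\Sigma) = 2$ for $\mc$ small: $\Sigma$ is a sphere, and the genus hypothesis is vacuous. And $L^p$-elliptic regularity for the single constant mean curvature equation --- which is all that is available, since we may differentiate $g$ only twice --- upgrades the integral smallness of $\hcirc$ to the statement that $\Sigma$ is a $W^{2,p}$-small normal graph $r = \rho(1 + \phi)$ over a Euclidean coordinate sphere $S_\rho(a)$ with $\rho$ comparable to $\mc^{-1}$ and $|a|$ bounded.

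Given this, uniqueness follows by a linearization near the leaf $\Sigma_0$ of mean curvature $\mc$. Writing $\Sigma$ and $\Sigma_0$ as graphs of the same mean curvature over a common coordinate sphere and rescaling to unit size, the difference $\psi$ of the graph functions satisfies a linear elliptic equation whose principal part is a small perturbation of the round-sphere Jacobi operator $\bar\Delta_{\S^2} + 2$ and whose inhomogeneity collects lower-order errors. Decomposing $\psi$ into spherical harmonics: the $\ell = 0$ component is fixed by requiring the two surfaces to enclose the same volume; on $\ell \ge 2$ the operator $\bar\Delta_{\S^2} + 2$ is invertible with a fixed spectral gap, so this component is dominated by the errors and drops out; and the $\ell = 1$ component --- the neutral ``infinitesimal translation'' direction of volume preserving stability on the exact round sphere --- is eliminated using that $m > 0$ renders the corresponding eigenvalue of the stability operator on the leaf strictly positive, of size comparable to $m\mc^3$, exactly as in the key computation of \cite{Huisken-Yau:1996} but now carried out with only $\mathcal{C}^2$-control of $g$. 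Hence $\psi \equiv 0$, i.e.\ $\Sigma = \Sigma_0$; the details are in \cite{Eichmair-Metzger:2012hdiso}.

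The main obstacle is the a priori step under merely $\mathcal{C}^2$-decay. The uniqueness proof of \cite{Huisken-Yau:1996} obtains pointwise control of the second fundamental form and its derivatives by bootstrapping evolution equations along the foliation, which requires $\mathcal{C}^4$-bounds on $g$; here that bootstrap must be replaced by integral estimates extracted from the stability inequality together with $L^p$-elliptic regularity for the single mean curvature equation. One has to check that the resulting smallness of $\mc^{-2}\int_\Sigma|\hcirc|^2$ and of $\|\phi\|_{W^{2,p}}$ comes with a quantitative rate in $\mc$ strong enough for the spectral argument of the last step to close, and --- crucially --- that every constant produced along the way depends only on $m$ and $C$, never on $\Sigma$ itself.
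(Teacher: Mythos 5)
Your proposal matches the paper's treatment: the paper offers no proof of this theorem beyond citing \cite{Metzger:2007} for the existence of the constant mean curvature foliation under $\mathcal{C}^2$-decay and \cite{Eichmair-Metzger:2012hdiso} for the uniqueness statement and the extension to arbitrary genus, which is exactly where you also defer the details. Your intermediate outline (stability-based integral smallness of $\hcirc$, Gauss--Bonnet forcing genus zero, graphical representation over a coordinate sphere, and the $m>0$ spectral gap on the $\ell=1$ modes) is consistent with the strategy of the cited reference.
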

In conjunction with Theorem
\ref{thm:volume_stable_center_positive} this leads to the following
complete description of large, volume preserving stable constant mean
curvature surfaces in initial data sets, which complements the
existing results on the ``global uniqueness problem for stable
constant mean curvature surfaces" (cf. \cite[bottom of
p. 301]{Huisken-Yau:1996} and \cite[footnote on
p. 1092]{Qing-Tian:2007}):

\begin{theorem} \label{thm:global_uniqueness} Let $(M, g)$ be an initial data set that is $\C^2$-asymptotic to Schwarz\-schild with mass $m>0$ and whose scalar curvature is everywhere positive. For a fixed point $p \in M$ there exists a constant $A > 0$ such that every connected closed volume preserving stable constant mean curvature surface in $M$ that is (together with the horizon, if it is non-empty) the boundary of a compact set containing $p$ and whose area is at least $A$ is uniquely determined by its area. In fact, these surfaces foliate the exterior of some bounded region in $M$.  
\end{theorem}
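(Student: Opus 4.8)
The plan is to deduce the statement from Theorem~\ref{thm:volume_stable_center_positive}, the existence of the constant mean curvature foliation of the asymptotic region of $(M,g)$ established in \cite{Metzger:2007}, and the uniqueness assertion of Theorem~\ref{thm:Huisken-Yau-Qing-Tian_improved}. Fix $p \in M$. Write $\{\Sigma_\mc\}_{\mc \in (0,\mc_0)}$ for the foliation of the complement of a compact subset of $(M,g)$ by volume preserving stable constant mean curvature spheres, indexed so that $\Sigma_\mc$ has mean curvature $\mc$; shrinking $\mc_0$ if necessary, we may assume that the uniqueness statement of Theorem~\ref{thm:Huisken-Yau-Qing-Tian_improved} holds for every $\mc \in (0,\mc_0)$, with some fixed $r_0 \geq 1$. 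I will use the standard properties of this foliation: the leaves are nested, with $\Sigma_{\mc'}$ lying inside $\Sigma_{\mc}$ whenever $\mc < \mc'$; they exhaust $\{r \geq r_1\}$ for some $r_1 \geq 1$; $\mc \mapsto \H^2_g(\Sigma_\mc)$ is continuous, strictly decreasing, and tends to $\infty$ as $\mc \to 0$; there is $r_2 \geq r_1$ with $\Sigma_\mc \subset B_{r_2}$ whenever $\mc \geq \mc_0/2$; and $\Sigma_\mc$ encloses $B_\rho$ once $\mc$ is small, for every $\rho$. Now choose $r > \max\{r_0,r_1,r_2\}$ large enough that $p \in B_r$ and, if $\partial M \neq \emptyset$, $\partial M \subset B_r$; let $A_r$ be the constant from Theorem~\ref{thm:volume_stable_center_positive}; fix $\mc_\ast \in (0,\mc_0/2]$ so small that $\Sigma_\mc$ encloses $B_r$ for all $\mc \leq \mc_\ast$; and set $A := \max\{A_r, \H^2_g(\Sigma_{\mc_\ast})\}$.

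Let $\Sigma$ be as in the statement with $\H^2_g(\Sigma) \geq A$. By Theorem~\ref{thm:volume_stable_center_positive}, $\Sigma$ is disjoint from $B_r$. Since $\Sigma$ together with the horizon bounds a compact region $\Omega \ni p$, while $B_r$ is connected, disjoint from $\Sigma$, and meets $\Omega$ in the point $p$, it follows that $B_r \subset \Omega$; in particular $\Sigma$ encloses $B_{r_0}$, and $\Sigma \neq \partial M$, so $\Sigma$ is not a closed minimal surface and, after a choice of unit normal, its mean curvature $\mc := \mc(\Sigma)$ is positive. Moreover $\Sigma$ is a compact subset of the foliated region $\{r \geq r_1\}$ disjoint from $B_{r_2}$, so the set of $\mc \in (0,\mc_0)$ with $\Sigma_\mc \cap \Sigma \neq \emptyset$ is a non-empty compact subset of $(0,\mc_0/2)$: it contains no $\mc \geq \mc_0/2$, because $\Sigma_\mc \subset B_{r_2}$ there, and it is bounded away from $0$, because the leaves diverge while $\Sigma$ is bounded. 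Let $\mc_2 < \mc_0/2$ be its largest element.

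Near a point where $\Sigma$ touches $\Sigma_{\mc_2}$, the surface $\Sigma$ lies on the outer side of $\Sigma_{\mc_2}$, since $\Sigma$ meets no leaf $\Sigma_\mc$ with $\mc > \mc_2$ and such leaves foliate a one-sided neighbourhood of $\Sigma_{\mc_2}$ on its inner side. Comparing mean curvatures with respect to the unit normals pointing towards infinity, the geometric maximum principle gives $\mc \leq \mc_2$: were $\mc > \mc_2 = \mc(\Sigma_{\mc_2})$, the strong maximum principle would force $\Sigma = \Sigma_{\mc_2}$ near the contact point, hence everywhere, contradicting $\mc > \mc_2$. Thus $\Sigma$ is a connected closed volume preserving stable constant mean curvature surface of mean curvature $\mc \in (0,\mc_0)$ enclosing $B_{r_0}$, so by Theorem~\ref{thm:Huisken-Yau-Qing-Tian_improved} it is the unique such surface, namely the leaf $\Sigma_\mc$. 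Since $\mc \mapsto \H^2_g(\Sigma_\mc)$ is strictly monotone, $\mc$---and hence $\Sigma$---is determined by $\H^2_g(\Sigma)$. Conversely, by the choice of $A$, every leaf $\Sigma_\mc$ with $\H^2_g(\Sigma_\mc) \geq A$ satisfies $\mc \leq \mc_\ast$, hence encloses $B_r \ni p$ and meets the hypotheses of the statement; these leaves foliate the complement of a bounded region in $M$.

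The crux is the inequality $\mc(\Sigma) < \mc_0$, i.e. that a large volume preserving stable constant mean curvature surface must have small mean curvature. This is exactly where the main work of the paper, Theorem~\ref{thm:volume_stable_center_positive}, enters: once $\Sigma$ is known to avoid the fixed ball $B_r$, it is confined to the asymptotic regime and can be trapped between leaves of the foliation, after which the maximum principle does the rest. Beyond this, the only delicate points are the orientation conventions in the comparison with the leaves and, when $\partial M \neq \emptyset$, keeping track of the region bounded by $\Sigma$ together with the horizon.
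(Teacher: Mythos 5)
Your argument is correct and is exactly the route the paper intends: Theorem \ref{thm:volume_stable_center_positive} expels a large surface from a fixed ball, after which it is trapped between leaves of the foliation of \cite{Metzger:2007} and identified with one of them via the uniqueness assertion of Theorem \ref{thm:Huisken-Yau-Qing-Tian_improved}. The only point to tidy is the sign of $\mc(\Sigma)$: rather than merely choosing a normal that makes it positive (which need not be the normal pointing to infinity), apply your touching argument also to the \emph{outermost} contacting leaf $\Sigma_{\mc_1}$ to get $0 < \mc_1 \leq \mc(\Sigma) \leq \mc_2 < \mc_0$ with respect to the normal pointing to infinity, which pins down the sign needed to invoke the uniqueness statement.
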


The analysis of \cite{Huisken-Yau:1996} and \cite{Qing-Tian:2007} takes place in the asymptotic regime of the initial data set. That $m>0$ is crucial at many points in their arguments. While the particular expansion of the metric at infinity is immaterial in our analysis, we depend on our assumption that the scalar curvature is non-negative. At the heart of the proofs of our main theorems is a fundamental mechanism discovered by R. Schoen and S.-T. Yau in their celebrated proof  of the positive mass theorem \cite{Schoen-Yau:1979-pmt1}: positive ambient scalar curvature is not compatible with the existence of (certain) strongly stable minimal surfaces. In order to exploit this mechanism, we need several additional insights on the behavior of large, volume preserving stable constant mean curvature surfaces in initial data sets, including refined curvature estimates, bounds on their area growth, and the observation that they limit to strongly stable minimal surfaces. 

The following example in Schwarzschild shows that we can only expect
to prove a uniqueness result for \emph{large} volume preserving stable
constant mean curvature surfaces. 

\noindent {\bf Example} (Round spheres in Schwarzschild): Let $(g_m)_{ij} = (1 + \frac{m}{2r})^4 \delta_{ij} = \phi_m^4 \delta_{ij}$ be the Schwarzschild metric of mass $m>0$ on $\R^3 \setminus \{0\}$ and let $S_r := \{x \in \R^3 : |x| = r\}$ denote the centered spheres for $r>0$. Recall that $g_m$ is a complete scalar flat metric, and that $x \to \left(\frac{m}{2}\right)^2\frac{x}{|x|^2}$ is a reflection isometry about the horizon $S_{\frac{m}{2}}$. The spheres $S_r$ are umbilic and have constant mean curvature  $\frac{1 - m/(2r)}{\phi_m^3}\frac{2}{r}$. Note that the mean curvature is increasing for $\frac{m}{2} \leq r \leq \frac{m (2 + \sqrt{3})}{2}$ and decreasing for $r \geq  \frac{m (2 + \sqrt{3})}{2}$. The sphere $S_{ \frac{m (2 + \sqrt{3})}{2}}$  of largest mean curvature is called the photonsphere. The stability operator $L_{S_r} = - \bar \Delta - (|h|^2 + \Ric(\nu, \nu))$, where $\bar \Delta$ is the (negative definite) Laplacian for the induced metric on $S_r$, is easily computed to be $- \phi^{-4} r^{-2} \Delta_{\mathbb{S}^2} + \frac{- 4r^2 + 8rm - m^2}{2r^4 \phi^6}$, where $\Delta_{\mathbb{S}^2}$ is the Laplacian on the round unit sphere. The eigenfunctions of this operator are those of $\Delta_{\S^2}$. It follows that $\lambda_0 = \frac{- 4r^2 + 8rm - m^2}{2r^4 \phi^6}$ (with the constants spanning the eigenspace), and that $\lambda_1 = \lambda_2 = \lambda_3 =  \frac{6m}{r^3 \phi^6}$. Hence $S_r$ is strongly stable for $\frac{m}{2} \leq r <  \frac{m (2 + \sqrt{3})}{2} $ and volume preserving stable for $r \geq  \frac{m (2 + \sqrt{3})}{2}$.   

The preceding example does not have positive scalar curvature and hence does not quite satisfy the conditions of Theorem \ref{thm:volume_stable_center_positive}. We can choose $\epsilon, m >0$ such that the metrics $(1 + \frac{m }{2r} - \frac{\epsilon}{r^2})^4 \delta_{ij}$ on $\{x \in \R^3 : |x| > \underline {r} \}$ have positive scalar curvature, where $\underline r$ is the largest zero of $1 + \frac{m }{2r} - \frac{\epsilon}{r^2}$, and such that for some $r_h > \underline r$ the coordinate sphere $S_{r_h}$ is a minimal surface. The properties of the coordinate spheres $S_r$ for $r \geq r_h$ with respect to this metric are similar to those in the Schwarzschild example above.

S. Brendle has shown recently \cite{Brendle:2011} that the only closed constant mean curvature surfaces in Schwarzschild that do not intersect the horizon $S_{\frac{m}{2}}$ are the umbilic spheres $S_r$. The first author and S. Brendle \cite{Brendle-Eichmair:2011} have classified the``null-homologous" isoperimetric surfaces in  Schwarzschild and showed that there exist small, null-homologous, volume preserving stable constant mean curvature surfaces in Schwarzschild that intersect the horizon.  

H. Bray observed in his thesis \cite{Bray:1998} that the Hawking mass is monotone along any area-increasing foliation through connected volume preserving stable constant mean curvature spheres in initial data sets with non-negative scalar curvature. He used this to prove a special case of the Riemannian Penrose inequality using isoperimetric surface techniques. He conjectured that the volume preserving stable constant mean curvature spheres found by G. Huisken and S.-T. Yau are isoperimetric and proved this for the exact Schwarzschild metric. In \cite{Eichmair-Metzger:2010} we confirmed H. Bray's conjecture and in fact proved something stronger: 

\begin{theorem} [\cite{Eichmair-Metzger:2010}] Let $(M, g)$ be an initial data set that is $\C^0$-asymptotic to Schwarzschild with mass $m>0$ in the sense of Definition \ref{def:initial_data_sets}. There exists $V_0 >0$ such that for every $V \geq V_0$ the infimum in 
\begin{eqnarray} \label{eqn:isoperimetric_profile}
  A_g(V) := \inf \{ \H^2_g(\del^*\Omega) : \Omega \subset M \text { is Borel,} \\ \text{contains the horizon, has finite perimeter, and } \CL^3_g(\Omega) = V \} \nonumber
\end{eqnarray}
is achieved by a smooth isoperimetric region $\Omega \subset M$ containing the horizon and of volume $V$. The boundary  $\partial \Omega$ of every minimizer $\Omega$ is close to a centered coordinate sphere.   
\end{theorem}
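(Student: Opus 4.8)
The plan is to deduce the theorem from an effective comparison between the isoperimetric profile $A_g$ of $(M,g)$ and that of the exact Schwarzschild manifold of mass $m$, using in an essential way that $m > 0$ makes centered coordinate spheres strictly more efficient than Euclidean spheres at enclosing large volume. Let $\alpha(V)$ denote the area of the region in exact Schwarzschild of mass $m$ bounded by the horizon $S_{m/2}$ and a centered coordinate sphere of enclosed volume $V$. An elementary computation gives $\alpha(V) = (36\pi)^{1/3}V^{2/3} - c(m)V^{1/3} + O(1)$ with $c(m)>0$, and by Bray's theorem \cite{Bray:1998} this is the isoperimetric value of Schwarzschild for large $V$, uniquely realized by the centered coordinate sphere regions. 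Since $g$ is $\C^0$-asymptotic to $g_m$, transplanting large coordinate spheres between $(M,g)$ and Schwarzschild yields $A_g(V) = \alpha(V) + o(V^{2/3})$ and, with more care, \emph{strict subadditivity of $A_g$ at infinity}: $A_g(V) < A_g(V_1) + (36\pi)^{1/3}(V-V_1)^{2/3}$ for all $0 < V_1 < V$ once $V$ is large, the slack being precisely the negative $c(m)V^{1/3}$ term, i.e. the first-order effect of positive mass.

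Existence of a minimizer of volume $V \ge V_0$ then follows by concentration-compactness. Given a minimizing sequence $\Omega_i$ of Borel sets of locally finite perimeter containing the horizon with $\CL^3_g(\Omega_i)=V$ and $\H^2_g(\del^*\Omega_i) \to A_g(V)$, the perimeter bound yields, along a subsequence, $\Omega_i \to \Omega_\infty$ in $L^1_{\mathrm{loc}}$ with $\Omega_\infty$ containing the horizon and $\CL^3_g(\Omega_\infty) =: V_1 \le V$; the only obstruction to $\Omega_\infty$ being a minimizer of volume $V$ is that the volume $V - V_1$ may escape into the asymptotically Schwarzschild end, where it occupies a region of perimeter at least $(36\pi)^{1/3}(V-V_1)^{2/3} - o(1)$ since the metric out there is nearly Euclidean. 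Lower semicontinuity of perimeter then forces $A_g(V) \ge A_g(V_1) + (36\pi)^{1/3}(V-V_1)^{2/3} - o(1)$, contradicting the strict subadditivity unless $V_1 = V$ and no perimeter is lost; hence $\Omega_\infty$ is a minimizer.

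Regularity is standard geometric measure theory: a volume-constrained perimeter minimizer in a smooth Riemannian $3$-manifold has smooth embedded boundary (there is no singular set, the ambient dimension being below $8$), this boundary has constant mean curvature, and $\del\Omega$ is automatically volume preserving stable. To identify its shape, observe first that the growth of the enclosed volume forces $\del\Omega$ to reach coordinate radius $\sim V^{1/3}$ in the asymptotic end. Rescaling $\del\Omega$ by $V^{-1/3}$ and transplanting it to the Schwarzschild end via the asymptotic identification gives a sequence of volume preserving stable constant mean curvature surfaces of bounded area ratio enclosing a fixed limiting volume; a compactness argument — relying on curvature estimates for volume preserving stable constant mean curvature surfaces in the spirit of those developed in this paper and on the Huisken--Yau/Qing--Tian analysis of the Schwarzschild end — identifies the limit as a round centered coordinate sphere, so that $\del\Omega$ is, after rescaling, $\C^1$-close to $S_{\rho(V)}$ with $\rho(V) = (3V/4\pi)^{1/3}(1+o(1))$; the near-equality $A_g(V) \approx \alpha(V)$ together with a quantitative-rigidity form of Bray's characterization of the isoperimetric regions of Schwarzschild places the center near the origin. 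Connectedness of $\Omega$ and of $\del\Omega$ is obtained by discarding superfluous components, which can only decrease area.

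\textbf{Main obstacle.} The crux is the quantitative strict subadditivity of $A_g$ — equivalently, ruling out loss of volume into the end — proved with only $\C^0$ control of $g - g_m$, so that no curvature or derivative bounds on the metric are available in the asymptotic region and every comparison must be carried out at the level of areas and volumes of explicit competitors. One needs the sharp first-order contribution of the mass $m$ to the area of large coordinate spheres, uniformly as $V \to \infty$, and one must treat separately the delicate regime where $V - V_1$ is small (where the efficient competitor is an outward push of a near-minimizer of $A_g(V_1)$, cheap because its mean curvature is $\sim V^{-1/3}$) and the regime where $V - V_1$ is comparable to $V$ (where one compares directly with the coordinate-sphere region of volume $V$). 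The closeness-to-coordinate-sphere conclusion is a secondary obstacle, requiring a rigidity/uniqueness input for volume preserving stable constant mean curvature surfaces in the Schwarzschild end and a stability version of Bray's uniqueness of the isoperimetric regions of Schwarzschild.
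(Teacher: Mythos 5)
This statement is quoted from \cite{Eichmair-Metzger:2010} and is not proved in the present paper, so there is no internal proof to compare against; your proposal does, however, follow essentially the strategy of that reference: the expansion $\alpha(V)=(36\pi)^{1/3}V^{2/3}-c(m)V^{1/3}+O(1)$ with $c(m)>0$ (your computation of the sign is correct), Bray's characterization of the isoperimetric regions of exact Schwarzschild, an effective volume/area comparison at the $\C^0$ level to rule out escape of volume into the end along a minimizing sequence, standard GMT regularity in ambient dimension $3$, and a compactness-plus-rigidity argument to identify the boundary as nearly a centered coordinate sphere. Your identification of the main obstacle -- the quantitative no-loss-of-volume step, split into the regimes where the escaping volume is small (compare against pushing the main component outward at cost $H\sim V^{-1/3}$ per unit volume) and where it is comparable to $V$ -- is exactly where the real work lies in the cited paper.

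Two points need repair. First, your final sentence, that connectedness is obtained ``by discarding superfluous components, which can only decrease area,'' is wrong as stated: deleting a component also deletes its volume and so violates the constraint $\CL^3_g(\Omega)=V$. One must instead reabsorb that volume into the main component (cheap, since its mean curvature is $O(V^{-1/3})$) and show that the area saved by deleting the extra component exceeds the cost of the outward push; this is the same mechanism as the no-escape argument, and in the compact region one also needs an a priori bound on the number and size of components (compare Proposition \ref{prop:number_components_bounded} and Corollary \ref{cor:number_of_components_bounded} of this paper). Second, your framing that ``no curvature or derivative bounds on the metric are available in the asymptotic region'' overstates the difficulty: Definition \ref{def:initial_data_sets} already provides $r^2|\partial g|+r^3|\partial^2 g|\leq C$; it is only the difference $g-g_m$ that is controlled merely in $\C^0$. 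This matters because the curvature estimates and the Huisken--Yau/Qing--Tian-type rigidity you invoke in the last step do require derivative control of $g$ itself (and, for the uniqueness statements, $\C^2$-closeness to Schwarzschild, cf.\ Theorem \ref{thm:Huisken-Yau-Qing-Tian_improved}); under the mere $\C^0$ hypothesis of the theorem the ``close to a centered coordinate sphere'' conclusion must be extracted from the isoperimetric deficit itself (smallness of $\int_\Sigma|\hcirc|^2$ via the Hawking mass and a De Lellis--M\"uller-type roundness estimate, then the $\C^0$ mass term to locate the center), not from a uniqueness theorem for volume preserving stable CMC spheres.
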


\subsection*{Structure of this paper} In Section \ref{sec:curvature_estimates} we derive useful curvature estimates \emph{with decay} for large, volume preserving stable constant mean curvature surfaces by refining an argument of R. Ye. These curvature estimates lead to a useful monotonicity formula ``at infinity" for non-compact minimal surfaces of quadratic area growth that implies that their blow-down is conical, as in W. Fleming's proof of the Bernstein theorem in $\R^3$. This will be important in Section \ref{sec:PMT} where we refine and extend the scope of an argument of R. Schoen and S.-T. Yau in their minimal surface proof of the positive mass theorem to apply to certain limits of volume preserving stable constant mean curvature surfaces. We collect some insights on the structure of large, volume preserving stable constant mean curvature surfaces -- in particular bounds on the number of their components -- in Section \ref{sec:number_of_components}. In Appendix \ref{sec:integral_decay_estimates} we collect (and slightly extend) some well-known estimates on integrals of polynomially decaying quantities over surfaces with bounded bending energy. In Appendix \ref{sec:Willmore_Appendix} we explain why the area growth of a surface of bounded bending energy is bounded, essentially by its area in a fixed compact set. To obtain such an initial area bound for the volume preserving stable constant mean curvature surfaces considered in this paper we analyze the proof of an estimate on the Hawking mass of such surfaces due to D. Christodoulou and S.-T. Yau (cited here as Lemma \ref{lem:Christodoulou-Yau}). This is where the assumption that the scalar curvature is positive enters crucially.    

In this paper we will distinguish conscientiously between immersed and properly immersed surfaces. A surface $\Sigma \subset M$, unless otherwise specified, is complete with no boundary, properly embedded, orientable, and two-sided.

\subsection*{Acknowledgements} We would like to thank Gerhard Huisken, Bennett Palmer, Jie Qing, Antonio Ros, Richard Schoen, Gang Tian, Shing-Tung Yau, and Rugang Ye for helpful discussions and their kind support. Michael Eichmair would like to also thank Johannes Nepomuk Gro\ss ruck, Thomas Kern, and Stefan Schmidinger for their great hospitality during times when important parts of this paper were written, and Otis Chodosh for carefully proofreading the final version of this paper.


\section{Curvature estimates through blow up} \label{sec:curvature_estimates}

In this section we discuss curvature estimates for volume preserving
stable constant mean curvature surfaces in homogeneously regular
$3$-manifolds. The arguments described here are by blow up and use a
characterization of volume preserving stable constant mean curvature immersions in Euclidean
space from  \cite{Barbosa-DoCarmo:1984}, \cite{Palmer:1986},
\cite{DaSilveira:1987}, \cite{Lopez-Ros:1989}: they are either spheres
or planes. The earliest reference containing this general line of
reasoning that we have been able to find is \cite[Theorem
7]{Ye:1996}. Variations of this argument (``local curvature estimates
via blow up and a Bernstein type theorem") in other related contexts
appear in \cite[Theorem 2.2]{Johnson-Morgan:2000} and \cite[Theorem
18]{Ros:2005} for isoperimetric surfaces, and in
\cite{Rosenberg-Souam-Toubiana:2010} for strongly stable constant mean
curvature surfaces. In \cite{Simon:1976}, curvature estimates for
minimizing boundaries in ambient dimensions $n \leq 7$ (and for graphs
when $n \leq 8$) have been obtained by similar reasoning. In
\cite{Rosenberg-Souam-Toubiana:2010} much care is applied to derive
curvature estimates that are essentially independent of ambient
geometric quantities. Their argument to show independence of the
injectivity radius -- by passing to the universal cover of the ambient
manifold -- does not carry over to our context: strong
stability lifts to the cover (cf.  \cite[Corollary
4]{Fischer-Colbrie-Schoen:1980} and \cite[Theorem
3.1]{Colding-Minicozzi:2002}), volume preserving
stability does not. (For example, there exist non-flat volume preserving
stable minimal surfaces in $3$-dimensional tori,
cf. \cite{Ross:1992}.) The iteration method of
\cite{Schoen-Simon-Yau:1975} has been adapted by
\cite{Huisken-Yau:1996} to derive curvature estimates for spherical
volume preserving stable constant mean curvature surfaces that lie far
out in the Euclidean end of an initial data set.

First we recall the beautiful characterization of volume preserving
stable constant mean curvature immersions that lies at the heart of these curvature
estimates, and is proven as \cite[Theorem 1.3]{Barbosa-DoCarmo:1984}
(when the immersion is compact), as \cite[Theorem 3.1]{Palmer:1986} (when the
immersion has non-zero mean curvature), and as \cite[Theorem
1.3]{DaSilveira:1987} and \cite[Theorem 5]{Lopez-Ros:1989} (in the
full generality as stated below):

\begin{lemma}[\protect{\cite{Barbosa-DoCarmo:1984, Palmer:1986, DaSilveira:1987, Lopez-Ros:1989}}] \label{lem:Lopez-Ros}  Let $(X, g)$ be a complete oriented Riemann surface and let $F : (X, g) \to (\R^3, \delta)$ be an isometric immersion. If this immersion has constant mean curvature and is volume preserving stable, then $F(X)$ is either a plane or a round sphere. 
\end{lemma}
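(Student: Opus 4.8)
The plan is to prove the contrapositive-flavored classification by combining the second variation (volume preserving stability) with the first variation (constant mean curvature) and exploiting the topology and completeness of $X$. First I would dispose of the case $\mc = 0$: a complete oriented volume preserving stable minimal immersion into $\R^3$ is, by a cutoff/logarithmic-cutoff argument, actually strongly stable (the mean-zero constraint can be removed because one can test with $u$ replaced by $u - c$ and absorb the constant), and Fischer-Colbrie--Schoen \cite{Fischer-Colbrie-Schoen:1980} / do Carmo--Peng then force $F(X)$ to be a plane. So assume $\mc \neq 0$; then two-sidedness is automatic and $X$ is orientable, and the immersion is proper on compact pieces.

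Next I would set up the $L^2$-type test functions coming from the ambient symmetries of $\R^3$. For each constant vector $a \in \R^3$, the function $u_a := \langle \nu, a\rangle$ is a Jacobi field (it satisfies $\bar\Delta u_a + (|h|^2 + \Ric(\nu,\nu))u_a = 0$, and $\Ric \equiv 0$ here), and for each $a$ the function $\langle x \wedge a, \nu\rangle$ (rotational fields) is another. The key point is to feed the stability inequality these functions — after correcting them to have zero mean so they are admissible — and conclude via the Jacobi equation that the inequality is saturated, i.e. that these corrected fields are genuine Jacobi fields with zero boundary term. In the compact case (spheres) this is essentially the Barbosa--do Carmo argument: one shows the first eigenvalue of the stability operator restricted to mean-zero functions is zero, realized by the coordinate functions of $\nu$, which forces $F(X)$ umbilic, hence a round sphere. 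For the noncompact case one needs the characterization to hold without compactness, and here I would invoke the structure theory: a complete CMC surface in $\R^3$ with one end that is volume preserving stable is either compact or has finite index considerations that, together with the Da Silveira / López--Ros analysis of the ends, force it to be a plane or a cylinder, and cylinders are then ruled out by a direct computation showing they are volume preserving \emph{unstable} (testing with $u = \sin(\text{axial coordinate})$ against the stability inequality: $|h|^2 = \frac{1}{2r^2}$ on a cylinder of radius $r$ while $\mc^2 = \frac{1}{r^2}$, and the spectral gap works against stability — one verifies the mean-zero test function $u$ with $\int u = 0$ violates the inequality).

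The main obstacle I anticipate is exactly the passage from compact to noncompact: the elegant one-line argument (nullity of the mean-zero stability operator $\Rightarrow$ umbilic) uses compactness of $X$ to produce a genuine minimizer/eigenfunction, and to integrate by parts without boundary terms. To handle general complete $X$ I would argue as follows: use the curvature estimates and the fact that a volume preserving stable CMC surface has at most linearly growing area (finite total curvature type bounds from the stability inequality à la \cite{Schoen-Simon-Yau:1975}, since away from the mean-zero constraint one still gets local curvature control) to show $X$ has finite topology and finitely many ends, each of which is (by the CMC condition in $\R^3$) asymptotic to a Delaunay end or is planar; then either $X$ is compact — handled above — or $X$ is noncompact with Delaunay-type or planar ends, and a logarithmic cutoff on each end shows that the correction constants needed to make the Jacobi fields mean-zero are negligible in the limit, so stability is again saturated; finally, saturation plus the Jacobi equation forces umbilicity on a noncompact complete CMC surface in $\R^3$, which is impossible (umbilic forces a sphere, which is compact), so the only surviving noncompact case is the plane. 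I would present this in the order: (i) reduce to $\mc \neq 0$; (ii) establish finite topology / end structure from stability; (iii) run the mean-zero-test-function argument, with logarithmic cutoffs supplying admissibility in the noncompact case; (iv) conclude umbilic or flat, hence sphere or plane.
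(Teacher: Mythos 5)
The paper does not actually prove this lemma: it is quoted as a known result, with the compact case attributed to Barbosa--do Carmo, the case $\mc \neq 0$ to Palmer, and the full statement to da Silveira and L\'opez--Ros. Your argument therefore has to stand on its own, and as written it has genuine gaps exactly at the step you yourself flag as the main obstacle, the noncompact case.

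Concretely: (i) In the case $\mc=0$, the reduction from volume preserving stability to strong stability is not as stated --- replacing a compactly supported $u$ by $u-c$ destroys compact support, and the logarithmic cut-off correction (the function $\chi_\epsilon$ of unit integral and small Dirichlet energy, as in Proposition \ref{prop:main} of the paper) requires an a priori quadratic area growth bound, which for an arbitrary complete immersion you do not have; in the paper that bound is a separate hypothesis supplied elsewhere (Corollary \ref{cor:quadratic_area_growth}). (ii) For $\mc \neq 0$ and $X$ noncompact, the structural input you invoke --- finite topology, Delaunay or planar end asymptotics, and ``the Da Silveira / L\'opez--Ros analysis of the ends'' --- is either circular (those are precisely the theorems being proved) or unavailable here: the lemma is explicitly stated for immersions that need not be proper, and Delaunay-type end analysis requires properness (and usually embeddedness). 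The claim of ``at most linearly growing area'' cannot be right (the plane already has quadratic area growth), and on a cylinder of radius $r$ one has $|h|^2 = r^{-2}$, not $\tfrac{1}{2}r^{-2}$ (the instability conclusion for cylinders survives this, but the test function $\sin$ of the axial coordinate must still be cut off to be admissible). The compact case of your argument --- nullity of the mean-zero stability operator realized by the components of $\nu$, forcing umbilicity --- is essentially Barbosa--do Carmo and is fine; but the noncompact case genuinely needs the machinery of the cited papers (volume preserving stability forces the strong stability operator to have index at most one; Fischer-Colbrie's theorem that finite index implies finite total curvature; the classification of index-one complete minimal surfaces and the exclusion of the catenoid and Enneper's surface), none of which your sketch reproduces or replaces. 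Since the paper itself treats the lemma as a black box, the defensible course is to do the same rather than to gesture at a proof.
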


We emphasize that the immersion in this lemma is not required to be proper.

\begin{proposition} [\protect{essentially \cite[Theorem 7]{Ye:1996}}] \label{prop:YeCurvatureEstimates} Let $(M, g)$ be a homogeneously regular Riemannian $3$-manifold. There exists a constant $c>0$ depending only on an absolute bound for the Ricci curvature and a lower bound on the injectivity radius of $(M, g)$ such that every oriented, two-sided, immersed volume preserving stable constant mean curvature surface $\Sigma \subset M$ with $|\mc_\Sigma| \leq 1$ satisfies $\sup_{x \in \Sigma}|h_\Sigma(x)| \leq c$. \end{proposition}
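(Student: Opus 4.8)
The plan is to argue by contradiction via a blow-up, following the strategy attributed to Ye. Suppose the claim fails: then there is a sequence of oriented, two-sided, immersed volume preserving stable constant mean curvature surfaces $\Sigma_i \subset M$ with $|\mc_{\Sigma_i}| \leq 1$ and points $x_i \in \Sigma_i$ with $\lambda_i := |h_{\Sigma_i}(x_i)| \to \infty$. Since we only seek a pointwise bound and the surfaces need not be proper, I would first localize: using homogeneous regularity, choose a uniform radius $\rho_0>0$ such that geodesic balls $B_g(x_i, \rho_0)$ are uniformly comparable to Euclidean balls, and then apply a standard point-selection (Schoen's trick, or a direct maximum argument for the scale-invariant quantity $|h_{\Sigma_i}(y)|\,\mathrm{dist}_{\Sigma_i}(y, \partial(\Sigma_i \cap B_g(x_i, \rho_0)))$) to replace $x_i$ by points $\tilde x_i$ where $|h|$ is nearly maximal on a ball of intrinsic radius $\gg \lambda_i^{-1}$. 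Rescale the ambient metric by $\lambda_i^2$ and pull back; in these rescaled coordinates, the ambient metrics converge in $C^\infty_{loc}$ to the flat metric on $\R^3$, the surfaces $\Sigma_i$ have second fundamental form bounded by roughly $1$ on larger and larger balls, and $|h|=1$ at the center.

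Next I would pass to a limit. Uniform curvature bounds plus the choice of $\tilde x_i$ give, via standard interior estimates for the constant-mean-curvature (now \emph{minimal}, since $\mc_{\Sigma_i}/\lambda_i \to 0$) equation, local graphical representations with uniform $C^{2,\alpha}$ bounds, so a subsequence converges in $C^2_{loc}$ to a complete immersed surface $F : X \to (\R^3, \delta)$, isometrically immersed, with $|h| \leq 1$ everywhere and $|h|=1$ at one point, hence not a plane. Two properties survive the limit: the limit surface has constant (in fact zero) mean curvature, and it is volume preserving stable — the stability inequality is scale-invariant, and the constraint $\int u = 0$ together with the $C^1_c$ test functions passes to the limit because the convergence is in $C^2_{loc}$ and the area elements converge; one must check that a compactly supported mean-zero test function on $X$ can be approximated by mean-zero test functions on $\Sigma_i$, which is routine by subtracting a small correction supported where the surfaces are uniformly area-nondegenerate. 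By Lemma \ref{lem:Lopez-Ros}, the limit $F(X)$ is a plane or a round sphere; a plane is excluded by $|h|=1$ somewhere, and a round sphere is excluded because it is compact while $|h| \leq 1$ forces its radius $\geq 1$, yet the rescaled surfaces have diameter tending to infinity in the region where curvature is controlled — more precisely, a closed round sphere of radius $\geq \tfrac12$ sitting in the $C^2_{loc}$ limit would already appear as a nearby closed component of $\Sigma_i$ for large $i$, contradicting that $|h_{\Sigma_i}|$ is unbounded on it (its curvature is $\leq 2$). Either way we reach a contradiction, proving the proposition.

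The main obstacle I anticipate is the handling of the volume-preserving (mean-zero) constraint under the blow-up limit: unlike strong stability, this condition is not obviously closed, because a mean-zero test function upstairs need not be mean-zero downstairs, and there is no covering-space trick available (as the authors note). The resolution is to observe that for a \emph{compactly supported} test function $u$ on $X$ one may add a small multiple of a fixed cutoff supported in a region where the surfaces $\Sigma_i$ have nondegenerate area, arranging the integral to vanish while changing the Rayleigh quotient by an amount that $\to 0$; this uses the $C^2_{loc}$ convergence of the immersions and the resulting convergence of the induced area measures on compact sets. A secondary technical point is ensuring the blow-up limit is complete and the curvature bound is genuinely global on $X$ — this is exactly what the point-selection step buys us, and it is why one rescales about the maximal-curvature points rather than about the original $x_i$.
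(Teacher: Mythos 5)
Your blow-up argument is essentially the paper's own route: the authors prove this proposition only by pointing to R.~Ye's argument as implemented in the proof of Proposition \ref{prop:curvature_estimates_decay} --- contradiction, Choi--Schoen-type point selection, rescaling to a complete volume preserving stable constant mean curvature immersion in $\R^3$, and Lemma \ref{lem:Lopez-Ros} to rule out both the plane (curvature $1$ at the marked point) and the sphere --- and your explicit treatment of the mean-zero constraint in the limit addresses a point the paper leaves implicit. One small caution: take the point-selection weight to be $(\rho_0 - d_{\Sigma_i}(x_i,\cdot))\,|h_{\Sigma_i}|$ on the \emph{intrinsic} ball of fixed radius $\rho_0$ (so that $|h_{\Sigma_i}(\tilde x_i)|\to\infty$ is guaranteed and the limit is genuinely minimal, which is what kills the sphere alternative cleanly), rather than the intrinsic distance to $\partial(\Sigma_i\cap B_g(x_i,\rho_0))$, which for an immersed, non-proper surface can be arbitrarily large or infinite and would then not force the rescaling factor to blow up.
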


We added the assumption that the mean curvature be bounded to the
statement of \cite[Theorem 7]{Ye:1996} because we have had some
difficulty understanding the details of the sketch of the argument
provided in \cite[Theorem 7]{Ye:1996} otherwise, specifically when the diameter of the surface is large. We have
corresponded with R. Ye about the original statement of Theorem 7 in
[34] and he has kindly shared with us a counterexample to this
original statement. It is not difficult to see that R.~Ye's argument,
cf.\ the proof of Proposition \ref{prop:curvature_estimates_decay} below,
implies the preceding proposition. 

We note that R. Ye's argument also shows that the extrinsic curvature of a volume preserving stable constant mean curvature surface $\Sigma \subset M$ as in the statement of Proposition \ref{prop:YeCurvatureEstimates} is large only when $\Sigma$ is a perturbation of a small geodesic sphere, cf. the proof of \cite[Theorem 18]{Ros:2005}. This complements the estimate in Proposition \ref{prop:curvature_estimates_decay} below. 

Note that typically there exist volume preserving stable constant mean
curvature surfaces  with arbitrarily large mean curvature in $(M, g)$:
They can be constructed as perturbations of small geodesic balls
around non-degenerate critical points for the scalar curvature 
\cite[Theorem 5]{Ye:1996}. If $(M, g)$ is compact, then small
isoperimetric regions will also have this property, cf. \cite[Theorem
2.2]{Johnson-Morgan:2000} (in general dimension) and \cite[Theorem
18]{Ros:2005} (in dimension $3$, by a different argument).

In the following proposition we adapt and slightly refine the argument of R. Ye in \cite{Ye:1996} to derive curvature decay estimates for connected volume preserving stable constant mean curvature surfaces in initial data sets. The result and proof are also similar to the work \cite{Rosenberg-Souam-Toubiana:2010} on strongly stable constant mean curvature surfaces. 

\begin{proposition} \label{prop:curvature_estimates_decay} Let $(M, g)$ be an initial data set and let $K \subset M$ be a non-empty compact subset. There exists a constant $c>0$ depending only on $(M, g)$ and $K$ such that $\sup_{x \in \Sigma} \left( \max \{1, r(x) \}  |h_\Sigma(x)| \right) \leq c$ for every connected volume preserving stable constant mean curvature surface $\Sigma \subset M$ with $|\mc_\Sigma| \leq 1$ and $\Sigma \cap K \neq \emptyset$. 
\begin{proof}
  From Proposition \ref{prop:YeCurvatureEstimates} we see that we may
  focus on points in $\Sigma \setminus B_2$. Suppose the proposition
  is wrong. Then there exists a sequence of volume preserving stable
  constant mean curvature surfaces $\Sigma_k \subset M$ with
  $|\mc_{\Sigma_k}| \leq 1$ such that $\Sigma_k \cap K \neq \emptyset$
  and points $x_k \in \Sigma_k$ such that $\max_{y \in \Sigma_k \cap
    B(x_k, \frac{|x_k|}{2})} (\frac{|x_k|}{2} - |y - x_k| )
  |h_{\Sigma_k} (y)| =: c_k \to \infty$. By Proposition
  \ref{prop:YeCurvatureEstimates} we must have that $|x_k| \to
  \infty$. Let $y_k \in \Sigma_k \cap B(x_k, \frac{|x_k|}{2})$ be a
  point where the maximum is achieved and put $r_k := \frac{|x_k|}{2}
  - |y_k - x_k|$. (This particular `weighted' point picking argument
  for obtaining local curvature estimates on possibly non-compact
  surfaces is as in \cite[p. 389]{Choi-Schoen:1985}.) Rescale the
  asymptotically flat metric $g_{ij}$ on $B(y_k, r_k)$ to the metric
  $\tilde {g}^k_{ij}$ on the ball $B(0, c_k) \subset \R^3$ using the
  transformation $x = y_k + \frac{r_k}{c_k} \tilde x$, i.e. $\tilde
  g^k = \left( \frac{c_k}{r_k}\right)^2 x^*g$. These rescaled metrics
  $\tilde g^k_{ij}$ on $B(0, c_k)$ converge to the Euclidean metric
  $\delta_{ij}$ on $\R^3$ locally uniformly in $\C^2$ by asymptotic
  flatness, since $|x_k| \to \infty$. The rescaled surfaces $\tilde
  \Sigma_k$ are volume preserving stable and have constant mean
  curvature with respect to the rescaled metrics, they pass through
  the origin, and $1= |h_{\tilde \Sigma_k} (0)| \geq \frac{1}{2}
  |h_{\Sigma_k} (\tilde x)|$ for $\tilde x \in B(0, \frac{c_k}{2})$.
  In particular, $\tilde \Sigma_k$ can be written as a Euclidean graph
  with uniform $\C^{2, \alpha}$ norms above a ball of uniform size in
  the tangent plane of every point $\tilde x \in B(0, \frac {c_k}{4} )
  \cap \tilde \Sigma_k$. A standard diagonalization argument shows
  that there is a subsequence (which we neglect to denote separately),
  a complete oriented abstract $\C^{2, \beta}$ Riemannian manifold
  $(\Sigma_\infty, g^\infty)$ with a marked point $p \in
  \Sigma_\infty$, and an isometric immersion $F: \Sigma_\infty \to
  \R^3$ with $F(p) = 0$ and unit normal $\nu \in \Gamma (F^* (T
  \R^3))$ with the following properties: There exist compactly
  supported $\C^{2, \beta}$ functions $u_k: \Sigma_\infty \to \R$
  whose $\C^{2, \beta}$ norms tend to zero uniformly and such that for
  every $R \geq 1$, $\{ F(q) + u_k(q) \nu(q) : q \in
  B_{\Sigma_\infty}(p, R)\} \subset \tilde \Sigma_k$ for every
  sufficiently large $k$ (depending on $R$). It follows that $F:
  \Sigma_\infty \to \R^3$ is a complete immersed volume preserving
  stable constant mean curvature immersion. From Lemma
  \ref{lem:Lopez-Ros} we know that $F(\Sigma_\infty)$ is either a
  plane or a sphere. The first alternative is impossible since by
  construction the length of the second fundamental form of
  $F(\Sigma_\infty)$ at $0 = F(p)$ is $1$. The latter alternative
  would imply that every $\Sigma_k$ contains a `far out' spherical
  component in $B(x_k, \frac{|x_k|}{2})$, contradicting the assumption
  that $\Sigma_k$ is connected and $\Sigma_k \cap K \neq \emptyset$.
\end{proof}
\end{proposition}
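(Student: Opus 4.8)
The plan is to argue by contradiction and blow up, combining the interior bound of Proposition~\ref{prop:YeCurvatureEstimates} with the L\'opez--Ros classification of volume preserving stable constant mean curvature immersions into $\R^3$ (Lemma~\ref{lem:Lopez-Ros}). Since any fixed compact piece of $(M,g)$ is homogeneously regular, Proposition~\ref{prop:YeCurvatureEstimates} already gives $|h_\Sigma|\le c$ on $B_2$, so it suffices to control $r(x)\,|h_\Sigma(x)|$ for $x\in\Sigma\setminus B_2$. If the estimate fails, there are connected volume preserving stable constant mean curvature surfaces $\Sigma_k$ with $|\mc_{\Sigma_k}|\le1$ and $\Sigma_k\cap K\ne\emptyset$, and points $x_k\in\Sigma_k$ with $|x_k|\,|h_{\Sigma_k}(x_k)|\to\infty$. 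On $B(x_k,|x_k|/2)$ I would use the weighted point-picking of Choi--Schoen: choose $y_k$ maximizing $(|x_k|/2-|y-x_k|)\,|h_{\Sigma_k}(y)|=:c_k$ and set $r_k:=|x_k|/2-|y_k-x_k|$; then $c_k\to\infty$. Every $z\in B(y_k,r_k)$ satisfies $|z|\ge|x_k|/2$, so Proposition~\ref{prop:YeCurvatureEstimates} forces $|x_k|\to\infty$, and in particular $B(y_k,r_k)\cap K=\emptyset$ for $k$ large. Rescaling $g$ on $B(y_k,r_k)$ to $\tilde g^k:=(c_k/r_k)^2\,\Phi_k^* g$ via $\Phi_k(\tilde x)=y_k+(r_k/c_k)\tilde x$, the decay hypotheses of Definition~\ref{def:initial_data_sets} together with $|x_k|\to\infty$ (and $r_k\le|x_k|/2$, $c_k\to\infty$) give $\tilde g^k\to\delta$ locally uniformly in $\C^2$ on $\R^3$. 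The rescaled surfaces $\tilde\Sigma_k$ are volume preserving stable constant mean curvature surfaces for $\tilde g^k$ through the origin with $|h_{\tilde\Sigma_k}(0)|=1$ and $|h_{\tilde\Sigma_k}|\le 2$ on $B(0,c_k/2)$.

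With uniformly bounded second fundamental form, each $\tilde\Sigma_k$ is, near any of its points in $B(0,c_k/4)$, a graph with uniform $\C^{2,\alpha}$ bounds over a disk of uniform radius in the tangent plane; a uniform local area bound (monotonicity formula, using the uniform bound on $|h_{\tilde\Sigma_k}|$ and $\tilde g^k\to\delta$) controls the number of such sheets, and a diagonalization produces a complete immersed $\C^{2,\beta}$ surface $F:(\Sigma_\infty,g^\infty)\to\R^3$ with a marked point $p$, $F(p)=0$, obtained as a graphical limit of the $\tilde\Sigma_k$ over geodesic balls $B_{\Sigma_\infty}(p,R)$. One must check that volume preserving stability passes to the limit: given $u\in\C^1_c(\Sigma_\infty)$ with $\int_{\Sigma_\infty}u=0$, transplant it to $\tilde\Sigma_k$, subtract a vanishingly small constant supported on a large region so that the zero-mean constraint holds exactly, insert this into the stability inequality for $\tilde\Sigma_k$, and let $k\to\infty$. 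Hence $F$ is a complete immersed volume preserving stable constant mean curvature immersion into $\R^3$, so by Lemma~\ref{lem:Lopez-Ros} its image is a plane or a round sphere. A plane is excluded since $|h(0)|=\lim_k|h_{\tilde\Sigma_k}(0)|=1$. If the image is a round sphere, then $\Sigma_\infty\cong\S^2$ is compact; choosing $R$ larger than its diameter, the graphical convergence forces $\tilde\Sigma_k$ to contain a closed, nearly round component inside $B(0,c_k/2)$ for $k$ large, i.e.\ $\Sigma_k$ contains a closed component inside $B(x_k,|x_k|/2)$, which is disjoint from $K$ --- contradicting that $\Sigma_k$ is connected and meets $K$.

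The step I expect to be the main obstacle is the compactness argument, specifically (i) obtaining uniform local area (sheet-number) bounds for the $\tilde\Sigma_k$, and (ii) verifying that volume preserving stability --- which, unlike strong stability, carries the nonlocal constraint $\int u=0$ --- survives the graphical limit. The remaining ingredients are the now-standard R.~Ye blow-up scheme; the one point needing care beyond that is the bookkeeping with $|x_k|$, $r_k$ and $c_k$ that guarantees the rescaling is centered far enough out in the asymptotically flat end for $\tilde g^k$ to converge to $\delta$ locally uniformly in $\C^2$, which is precisely where the preliminary reduction via Proposition~\ref{prop:YeCurvatureEstimates} is used.
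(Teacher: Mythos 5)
Your proposal is correct and follows essentially the same route as the paper: reduction to $\Sigma\setminus B_2$ via Proposition~\ref{prop:YeCurvatureEstimates}, the Choi--Schoen weighted point-picking, rescaling to a Euclidean blow-up limit, and the L\'opez--Ros alternative with the plane excluded by the normalization $|h(0)|=1$ and the sphere excluded by connectedness and $\Sigma_k\cap K\neq\emptyset$. The extra care you flag --- sheet-number bounds in the compactness step and the passage of the nonlocal mean-zero constraint to the limit --- is warranted and handled correctly; the paper simply leaves these details implicit.
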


We recall the following well-known fact: 

\begin{lemma} \label{lem:comparison_second} Consider on $\R^3 \setminus B(0, 1)$ a metric of the form $g_{ij} = \delta_{ij} + b_{ij}$ where $|x||b_{ij}| + |x|^2 |\partial_k b_{ij}| \leq C'$ for all $x$ such that $|x| \geq 1$. Let $\Sigma  \subset \R^3 \setminus B(0, 1)$ be an oriented surface and let $h_g, h_\delta$ and $\mc_g, \mc_\delta$ denote the $(1, 1)$-second fundamental forms and the mean curvature scalars of $\Sigma$ computed with respect to $g$ and $\delta$. Then $|h_g - h_\delta|_\delta \leq C \frac{|x||h_g|_g + 1}{|x|^2}$ and $|\mc_g - \mc_\delta| \leq C \frac{|x||h_g|_g + 1}{|x|^2}$ for all $|x| \geq r_0$ where $r_0$ and $C$ only depend on $C'$.  
\end{lemma}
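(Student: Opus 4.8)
The plan is to carry out the computation in the ambient Euclidean coordinates $x=(x_1,x_2,x_3)$ and to keep track of which error terms carry a factor of the perturbation $b$ and which carry a factor of $\del b$. Since $|x||b|\le C'$, the metrics $g$ and $\delta$ are $(1+C|x|^{-1})$-comparable on $\R^3\setminus B(0,1)$; in particular $g^{-1}=\delta^{-1}+O(|x|^{-1})$, and the Hilbert--Schmidt norms $|\cdot|_g$ and $|\cdot|_\delta$ of an endomorphism of $T\Sigma$ differ by at most a factor $1+C|x|^{-1}$. The Christoffel symbols of $g$ in these coordinates involve only $\del b$ (since $\del\delta=0$) together with the bounded tensor $g^{-1}$, so they have size at most $C|x|^{-2}$. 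Finally, the trace of a $(1,1)$-tensor does not depend on the metric, so $\mc_g=\tr h_g$ and $\mc_\delta=\tr h_\delta$, and hence $|\mc_g-\mc_\delta|=|\tr(h_g-h_\delta)|\le\sqrt2\,|h_g-h_\delta|_\delta$; thus it suffices to establish the estimate for the second fundamental forms.

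Fix $p\in\Sigma$ and let $\nu_\delta,\nu_g$ denote the unit normals of $\Sigma$ with respect to $\delta$ and $g$. Using $\delta(\nu_\delta,\tau)=0$ for $\tau\in T_p\Sigma$, I would write $\nu_g=\lambda(\nu_\delta+w)$ with $w\in T_p\Sigma$; the $g$-orthogonality condition reads $g(w,\tau)=-b(\nu_\delta,\tau)$, so $w$ is obtained from the covector $\tau\mapsto b(\nu_\delta,\tau)$ by raising an index with $g|_{T\Sigma}$, and the normalization determines $\lambda$. One reads off $|w|_\delta+|\lambda-1|\le C|x|^{-1}$. Recall that in Euclidean coordinates the Weingarten map is minus the covariant derivative of the unit normal, so for unit $X\in T_p\Sigma$ one has $h_\delta(X)=-\del_X\nu_\delta$ and $h_g(X)=-\del_X\nu_g-\Gamma^g(X,\nu_g)$, whence
\[
h_g(X)-h_\delta(X)=-\del_X(\nu_g-\nu_\delta)-\Gamma^g(X,\nu_g).
\]
The last term is bounded by $C|x|^{-2}$. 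For the first, extend the construction of $w$ and $\lambda$ to a neighbourhood of $p$ in $\Sigma$, choose a tangent frame of $\Sigma$ that is parallel at $p$ for the induced Levi--Civita connection, and differentiate $(\lambda-1)\nu_\delta+\lambda w$ along $X$. Every derivative that lands on $b$ or $\del b$ contributes a factor $O(|x|^{-2})$; every derivative that lands on $\nu_\delta$ or on the tangent frame contributes a factor $O(|h_\delta|_\delta)$ by the choice of frame, but such a term always retains the factor of $b$ inherited from $w$ or from $\lambda-1$, hence is $O(|x|^{-1}|h_\delta|_\delta)$. This gives $|\del_X(\nu_g-\nu_\delta)|_\delta\le C|x|^{-1}|h_\delta|_\delta+C|x|^{-2}$ and therefore
\[
|h_g-h_\delta|_\delta\le C|x|^{-1}|h_\delta|_\delta+C|x|^{-2}.
\]

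To finish I would trade $|h_\delta|_\delta$ for $|h_g|_g$: by the triangle inequality and metric comparability, $|h_\delta|_\delta\le(1+C|x|^{-1})|h_g|_g+C|x|^{-1}|h_\delta|_\delta+C|x|^{-2}$, so choosing $r_0=r_0(C')$ with $C|x|^{-1}\le\tfrac12$ for $|x|\ge r_0$ and absorbing gives $|h_\delta|_\delta\le C(|h_g|_g+|x|^{-2})$ there; reinserting this into the previous display yields $|h_g-h_\delta|_\delta\le C(|x||h_g|_g+1)|x|^{-2}$ for $|x|\ge r_0$, and the mean curvature bound follows by taking traces. The step that demands genuine care is the bookkeeping in the middle paragraph: one must check that the derivatives of $\nu_\delta$ and of the tangent frame -- which a priori are controlled only by $|h_\delta|$ and by no decay rate -- never occur except against a factor of $b$, so that they contribute $O(|x|^{-1}|h_\delta|)$ rather than $O(|h_\delta|)$. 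This is most transparent if $\nu_g-\nu_\delta$ is kept in the explicit algebraic form above, built from $\nu_\delta$, a local tangent frame, and $g$, rather than passed through a graphical representation of $\Sigma$.
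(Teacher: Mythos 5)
The paper states this lemma without proof (it is introduced as a ``well-known fact''), so there is nothing to compare against; your write-up in effect supplies the missing argument, and it is correct. The decomposition $\nu_g=\lambda(\nu_\delta+w)$ with $g(w,\tau)=-b(\nu_\delta,\tau)$ is the right device, and the one step that genuinely needs care -- that derivatives falling on $\nu_\delta$ or on the tangent frame, which are only of size $O(|h_\delta|_\delta)$ with no decay, always appear multiplied by $b$, $w$, or $\lambda-1$ and hence contribute $O(|x|^{-1}|h_\delta|_\delta)$ -- is exactly the point you flag and it checks out: $\partial_X w$ and $\partial_X\lambda$ are each $O(|x|^{-2})+O(|x|^{-1}|h_\delta|_\delta)$, the Christoffel term is $O(|x|^{-2})$, and the final absorption $|h_\delta|_\delta\le C(|h_g|_g+|x|^{-2})$ for $|x|\ge r_0(C')$ converts the intermediate bound into the stated one. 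The reduction of the mean curvature estimate to the $(1,1)$-tensor estimate via $|\tr A|\le\sqrt2\,|A|_\delta$ is also fine.
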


We will use the following important result of D. Christodoulou and S.-T. Yau to obtain geometric bounds for volume preserving stable constant mean curvature surfaces. 

\begin {lemma} [\protect{\cite{Christodoulou-Yau:1988}, cf. \cite [Theorem 12]
{Ros:2005}}] \label{lem:Christodoulou-Yau}
Let $(M, g)$ be a Riemannian $3$-manifold  and let $\Sigma \subset M $ be a connected closed volume preserving stable constant mean curvature
surface. Then $\int_\Sigma (\frac{2}{3} \Scal_g + \frac{2}{3} |\ring{ h}|^2 + \mc^2 ) d \H^2_g\leq \frac{64 \pi}{3}$. If $\Sigma$ is a topological sphere, then the bound on the right-hand side can be improved to $16 \pi$. 
\end{lemma}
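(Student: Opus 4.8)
The plan is to run the classical test-function argument of D.~Christodoulou and S.-T.~Yau, feeding the components of a balanced conformal map onto the round sphere into the volume preserving stability inequality; compare \cite[Theorem 12]{Ros:2005}. Since $\Sigma$ is closed, every $u \in \C^1(\Sigma)$ with $\int_\Sigma u \, d\H^2_g = 0$ is admissible. First I would bring the stability inequality into mixed intrinsic--extrinsic form. The twice-traced Gauss equation reads $\Scal_\Sigma = \Scal_g - 2\Ric(\nu, \nu) + \mc^2 - |h|^2$, and for a surface $|h|^2 = |\hcirc|^2 + \half \mc^2$; combining these gives
\[
  |h|^2 + \Ric(\nu,\nu) = \tfrac12 |\hcirc|^2 + \tfrac34 \mc^2 + \tfrac12 \Scal_g - \tfrac12 \Scal_\Sigma ,
\]
so that volume preserving stability is equivalent to
\[
  \int_\Sigma \left( \tfrac12 |\hcirc|^2 + \tfrac34 \mc^2 + \tfrac12 \Scal_g \right) u^2 \, d\H^2_g
  \;\leq\; \int_\Sigma |\bar\nabla u|^2 \, d\H^2_g + \tfrac12 \int_\Sigma \Scal_\Sigma\, u^2 \, d\H^2_g
\]
for all $u \in \C^1(\Sigma)$ with $\int_\Sigma u \, d\H^2_g = 0$.

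Second, I would choose the test functions. Let $\gamma$ denote the genus of $\Sigma$ (which is orientable, as in our standing convention, hence carries a conformal structure; when $\mc \neq 0$ two-sidedness is automatic anyway). By the classical theory of compact Riemann surfaces (Meis's theorem, equivalently a Brill--Noether count), $\Sigma$ carries a non-constant conformal branched cover $\phi = (\phi_1, \phi_2, \phi_3) \colon \Sigma \to \S^2 \subset \R^3$ of some degree $d \leq \lfloor \tfrac{\gamma+3}{2} \rfloor$. Post-composing $\phi$ with a suitable conformal automorphism of $\S^2$ via Hersch's balancing argument --- the normalized barycenter of $\Psi_\xi \circ \phi$, as $\Psi_\xi$ runs through the family of conformal automorphisms of $\S^2$ parametrized by $\xi$ in the open unit ball, defines a continuous map $B^3 \to \overline{B^3}$ that restricts to the identity on $\partial B^3$ and hence hits the origin --- we may arrange that $\int_\Sigma \phi_i \, d\H^2_g = 0$ for $i = 1, 2, 3$, without changing the degree. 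Now $\sum_i \phi_i^2 \equiv 1$ on $\S^2$, and for a conformal branched cover an isothermal-coordinate computation gives $\int_\Sigma \sum_i |\bar\nabla \phi_i|^2 \, d\H^2_g = 2 \operatorname{Area}(\phi) = 8\pi d$. Inserting $u = \phi_i$ into the displayed inequality and summing over $i$ therefore yields
\[
  \int_\Sigma \left( \tfrac12 |\hcirc|^2 + \tfrac34 \mc^2 + \tfrac12 \Scal_g \right) d\H^2_g
  \;\leq\; 8\pi d + \tfrac12 \int_\Sigma \Scal_\Sigma \, d\H^2_g .
\]

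Third, I would conclude. By Gauss--Bonnet $\int_\Sigma \Scal_\Sigma \, d\H^2_g = 4\pi \chi(\Sigma) = 8\pi(1-\gamma)$, so the right-hand side equals $8\pi d + 4\pi(1-\gamma)$. Using $d \leq \lfloor \tfrac{\gamma+3}{2} \rfloor$ and distinguishing even from odd genus, one checks that $8\pi d + 4\pi(1-\gamma) \leq 16\pi$ for every $\gamma \geq 0$, with equality precisely when $\gamma$ is odd, and that the right-hand side equals $12\pi$ when $\gamma = 0$ (in which case $d = 1$). Multiplying through by $\tfrac43$ gives $\int_\Sigma (\tfrac23 \Scal_g + \tfrac23 |\hcirc|^2 + \mc^2) \, d\H^2_g \leq \tfrac{64\pi}{3}$ in general, and $\leq 16\pi$ when $\Sigma$ is a topological sphere.

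I expect the only genuinely delicate point to be the construction of the balanced conformal parametrization: it requires invoking the existence of a meromorphic function of controlled degree on $\Sigma$ (the gonality estimate) and then verifying the continuity and boundary behavior of the barycenter map in Hersch's argument, so that a topological degree argument produces the balanced automorphism. The remaining ingredients --- the Gauss equation rearrangement, the energy identity for conformal branched covers, Gauss--Bonnet, and the elementary estimate in the genus --- are routine.
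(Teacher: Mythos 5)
Your argument is correct and is essentially the proof of the cited references (Christodoulou--Yau for the sphere case via Hersch's theorem, and \cite[Theorem 12]{Ros:2005} for general genus via the Yang--Yau gonality bound $d \leq \lfloor\tfrac{\gamma+3}{2}\rfloor$), which the paper invokes without reproducing; the Gauss-equation rearrangement, the energy identity $\int_\Sigma \sum_i |\bar\nabla\phi_i|^2\,d\H^2_g = 8\pi d$, and the final genus bookkeeping all check out and yield exactly the constants $\tfrac{64\pi}{3}$ and $16\pi$.
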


In Euclidean space, compact volume preserving stable constant mean curvature surfaces are spheres by Lemma \ref{lem:Lopez-Ros}, and the following corollary is obvious. In our more general situation we depend on geometric estimates coming from a bound on the bending energy $\int_\Sigma \mc^2 d \H^2_g$ of a surface from \cite{Simon:1993} and our curvature decay estimates in Proposition \ref{prop:curvature_estimates_decay}.  

\begin{corollary} \label{cor:quadratic_area_growth}
Let $(M, g)$ be an initial data set with non-negative scalar curvature. Given a non-empty compact subset $K \subset M$ there exist constants $C, r_0 \geq 1$ depending only on $(M, g)$ and $K$ with the following property: Let $\Sigma \subset M$ be a connected closed volume preserving stable constant mean curvature surface with $\Sigma \cap K \neq \emptyset$. Let $r_0 \leq r \leq \sigma $ and assume that $\Sigma$ intersects $\partial B_{r}$ transversely. Then 
\begin{eqnarray} \label{eqn:aux_quadratic_growth} 
\frac{\H^2_g(\Sigma \cap (B_\sigma \setminus B_r))}{\sigma^2} \leq C \left( 1 + \frac{\H^1_g (\Sigma \cap \partial B_r)}{r} \right).  
\end{eqnarray} 
If $(M, g)$ has everywhere positive scalar curvature, then there exists a constant $\Theta > 0$ depending only on $(M,g)$ and $K$ such that 
\begin{eqnarray} 
\label{eqn:quadratic_area_growth} \frac{\H^2_g(\Sigma \cap B_\sigma )}{\sigma^2} \leq \Theta \text{ for all } \sigma \geq 1. 
\end{eqnarray}
\begin{proof}
By Lemma \ref{lem:Christodoulou-Yau}, $\int_{\Sigma}
(\frac{2}{3}\Scal_g + \mc^2) d \H^2_g \leq \frac{64\pi}{3}$. Hence
$\int_\Sigma \mc^2 d \H^2_g$ is bounded and we can apply Lemma
\ref{lem:WillmoreAreaBound}, Proposition
\ref{prop:curvature_estimates_decay} (the comment below Proposition \ref{prop:YeCurvatureEstimates} shows that $H$ must stay bounded if $\Sigma$ has diameter, say, greater than one), and Lemma
\ref{lem:WillmoreMonotonicityManifold} (with $\rho = \max_{x\in\Sigma} r(x)$) to establish \eqref{eqn:aux_quadratic_growth}. If in addition the scalar curvature $\Scal_g$ is bounded below by a positive constant near $\bar B_{r_0}$, say on $B_{r_0 + \delta}$, then $\H^2_g (\Sigma \cap B_{r_0 + \delta})$ is bounded above. This implies that there exists $r \in (r_0, r_0 + \delta)$ such that $\Sigma$ intersects $\partial B_r$ transversely and such that $\H^1_g (\Sigma \cap \partial B_r)$ is also a priori bounded, so that \eqref{eqn:quadratic_area_growth} follows from \eqref{eqn:aux_quadratic_growth}. 
\end{proof}
\end{corollary}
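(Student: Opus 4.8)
The plan is to assemble the Christodoulou--Yau bending energy bound (Lemma \ref{lem:Christodoulou-Yau}), the curvature decay estimate (Proposition \ref{prop:curvature_estimates_decay}), and a Simon-type monotonicity formula for the $L^2$ norm of the mean curvature adapted to the asymptotically flat geometry (Lemmas \ref{lem:WillmoreAreaBound} and \ref{lem:WillmoreMonotonicityManifold}). Since $\Scal_g \geq 0$, Lemma \ref{lem:Christodoulou-Yau} immediately gives $\int_\Sigma \mc^2 \, d\H^2_g \leq \tfrac{64\pi}{3}$, so $\Sigma$ has uniformly bounded bending energy. Before invoking the curvature decay, which needs a bound on $\mc_\Sigma$, I would dispose of small surfaces: if $\Sigma$ has diameter at most one then, as $\Sigma \cap K \neq \emptyset$, it lies in a fixed compact subset of $M$ and hence in $B_{r_0}$ once $r_0$ is chosen large enough in terms of $(M,g)$ and $K$, so that $\H^2_g(\Sigma \cap (B_\sigma \setminus B_r)) = 0$ for all $r \geq r_0$ and \eqref{eqn:aux_quadratic_growth} is trivial. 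If the diameter exceeds one, the comment following Proposition \ref{prop:YeCurvatureEstimates} (R.~Ye's argument: the extrinsic curvature of a volume preserving stable constant mean curvature surface is large only for small perturbations of small geodesic spheres) bounds $|h_\Sigma|$, hence $|\mc_\Sigma| \leq \sqrt{2}\, \sup_\Sigma |h_\Sigma|$, by a constant depending only on $(M,g)$. Proposition \ref{prop:curvature_estimates_decay}, whose proof only uses that $\mc_\Sigma$ is uniformly bounded, then yields $\max\{1, r(x)\} \, |h_\Sigma(x)| \leq c$ on $\Sigma$; in particular, evaluating this where $r|_\Sigma$ is maximal and using that $\mc_\Sigma$ is constant gives $|\mc_\Sigma| \leq \sqrt 2 \, c / \max_{x \in \Sigma} r(x)$.

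For \eqref{eqn:aux_quadratic_growth} itself, fix $r_0 \leq r \leq \sigma$ with $\Sigma$ meeting $\partial B_r$ transversely and consider the surface with boundary $\Sigma \setminus B_r$: its boundary $\Sigma \cap \partial B_r$ sits on $\partial B_r$, and all of it lies in the region $\{r \geq r_0\}$ where $g$ is close to $\delta$. I would feed this into the manifold Willmore monotonicity formula of Lemma \ref{lem:WillmoreMonotonicityManifold}, centered at the coordinate origin and with outer scale $\rho = \max_{x \in \Sigma} r(x)$. The mechanism is the Euclidean one: Simon's modified quantity, of the form $\sigma^{-2}\H^2(\Sigma' \cap B_\sigma) + \tfrac{1}{16}\int_{\Sigma' \cap B_\sigma} \mc^2 \, d\H^2 + (\text{cross and boundary terms})$ for $\Sigma' = \Sigma \setminus B_r$, is monotone non-decreasing in $\sigma$; since $\Sigma'$ is bounded, $\sigma^{-2}\H^2(\Sigma' \cap B_\sigma) \to 0$ as $\sigma \to \infty$, so the value of the modified quantity at every finite scale is controlled by its limit at infinity, which is a multiple of $\int_{\Sigma'} \mc^2 \, d\H^2 \leq \tfrac{64\pi}{3}$ plus a scale-invariant boundary contribution of the form $\H^1_g(\Sigma \cap \partial B_r)/r$. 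The error terms produced by the non-Euclidean metric are quadratic in the ambient curvature $|\operatorname{Rm}_g| \lesssim r^{-3}$ against a weight $\lesssim r^{2}$, hence integrable and uniformly small by the curvature decay of the previous paragraph together with the integral decay estimates of Appendix \ref{sec:integral_decay_estimates}, while the area of $\Sigma$ in the transition region near $\partial B_r$ is controlled by Lemma \ref{lem:WillmoreAreaBound}. Putting this together gives \eqref{eqn:aux_quadratic_growth}.

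The main obstacle I anticipate is precisely this monotonicity step: the Euclidean (closed surface, then boundary) versions are classical, but running it in the asymptotically flat setting so that it closes with exactly the boundary term $\H^1_g(\Sigma \cap \partial B_r)/r$ and with all ambient contributions absorbed requires the careful bookkeeping of lower-order terms for which the curvature and integral decay estimates were prepared.

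Finally, to obtain \eqref{eqn:quadratic_area_growth} when $\Scal_g$ is everywhere positive, choose $\delta, \kappa > 0$ with $\Scal_g \geq \kappa$ on the compact set $\bar B_{r_0 + \delta}$. Lemma \ref{lem:Christodoulou-Yau} then gives $\tfrac{2}{3}\kappa \, \H^2_g(\Sigma \cap B_{r_0 + \delta}) \leq \int_\Sigma \tfrac{2}{3}\Scal_g \, d\H^2_g \leq \tfrac{64\pi}{3}$, so $\H^2_g(\Sigma \cap B_{r_0 + \delta})$ is bounded by a constant depending only on $(M,g)$ and $K$. By the coarea formula there is some $r \in (r_0, r_0 + \delta)$ at which $\Sigma$ meets $\partial B_r$ transversely and with $\H^1_g(\Sigma \cap \partial B_r)$ a priori bounded; feeding this $r$ into \eqref{eqn:aux_quadratic_growth} and adding back the bounded quantity $\H^2_g(\Sigma \cap B_r)$ yields $\H^2_g(\Sigma \cap B_\sigma) \leq \Theta \sigma^2$ for all $\sigma \geq 1$, after enlarging $\Theta$ to absorb the constants on the range $1 \leq \sigma \leq r_0 + \delta$.
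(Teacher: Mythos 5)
Your proposal is correct and follows essentially the same route as the paper: Christodoulou--Yau gives the bending energy bound, the remark after Proposition \ref{prop:YeCurvatureEstimates} (after disposing of small-diameter surfaces) bounds $\mc_\Sigma$ so that Proposition \ref{prop:curvature_estimates_decay} applies, and then Lemma \ref{lem:WillmoreAreaBound} combined with Lemma \ref{lem:WillmoreMonotonicityManifold} at the outer scale $\rho = \max_{x\in\Sigma} r(x)$ yields \eqref{eqn:aux_quadratic_growth}, with the positive-scalar-curvature case handled by the same coarea slicing argument. The extra detail you supply on the small-diameter case and on the structure of the Simon monotonicity is consistent with what the paper leaves implicit.
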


The following corollary is standard for minimal surfaces in Euclidean space. In order to obtain the result here, we use the curvature estimates with decay for volume preserving stable minimal surfaces from Proposition \ref{prop:curvature_estimates_decay}.

\begin{corollary} \label{cor:density_at_infinity} Let $(M, g)$ be an
  initial data set. Given a non-empty compact subset $K \subset M$
  there is  a constant $C >0$ depending only on $(M, g)$ and
  $K$ with the following property: For every connected volume preserving stable
  minimal surface $\Sigma \subset M$ with $\Sigma \cap K \neq
  \emptyset$ such that $\H^2_g(\Sigma \cap B_\sigma) \leq \Theta
  \sigma^2$ for all $\sigma \geq 1$ one has that $\sigma \to
  \sigma^{-2}\H^2_\delta (\Sigma \cap B_\sigma) + C
  \Theta \sigma^{-1}$ is monotone increasing for $\sigma \geq
  1$. In particular, the limit of $\sigma^{-2}\H^2_g(\Sigma \cap
  B_\sigma)$ as $\sigma \to \infty$ exists.  
\begin{proof}
We choose $r_0 \geq 1$ so large that the curved and Euclidean Hausdorff measures are equivalent on $M \setminus B_{r_0}$. It follows from Proposition \ref{prop:curvature_estimates_decay} and Lemma \ref{lem:comparison_second} that  $|\mc_\delta| \leq \frac{C}{|x|^2}$ if $r\geq r_0$, choosing $r_0$ even larger if necessary.  There exists $r \in (r_0, r_0 +1)$ so that $\partial B_r$ intersects $\Sigma$ transversely and such that $\H^1_g(\Sigma \cap \partial B_r) \leq \Theta (r_0+1)^2$. An obvious modification of the proof of \cite[(17.3)]{GMT} to surfaces with boundary shows  that for a.e. $\sigma \geq r$ one has that
\begin{eqnarray*}
\frac{d}{d\sigma} \left( \frac{\H_\delta^2 (\Sigma \cap (B_\sigma \setminus B_r))}{\sigma^2} - \int_{\Sigma \cap (B_\sigma \setminus B_r)} \frac{\left|D^{\perp} |x|\right|^2}{|x|^2} d \H^2_\delta \right) &=& \\  - \sigma^{-3} \left( \int_{\Sigma \cap (B_\sigma \setminus B_r)} \mc_\delta (X, \nu) d \H^2_\delta + \int_{\Sigma \cap \partial B_r} (X, \eta) d \H^1_\delta \right)
&\geq& \\   - C \sigma^{-3} \int_{\Sigma \cap (B_\sigma \setminus B_r)} \frac{1}{|x|} d \H^2_\delta                - \frac{r \H_\delta^1(\Sigma \cap \partial B_r)}{\sigma^3} 
&\geq& \\ - \frac{C'  \Theta}{\sigma^2}   - \frac{r \H_\delta^1(\Sigma \cap \partial B_r)}{\sigma^3} 
\end{eqnarray*} 
where $\eta$ is the inward pointing unit normal of $\Sigma \cap \partial B_r$ in $\Sigma$.  Here we have used the quadratic area growth for a bound $\int_{\Sigma \cap (B_\sigma \setminus B_r)} |x|^{-1} d \H^2_\delta \lesssim \Theta \sigma$.
\end{proof}
\end{corollary}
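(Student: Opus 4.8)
The plan is to establish a monotonicity formula ``at infinity'' for $\Sigma$, viewed as a surface in the asymptotically flat chart $\R^3\setminus B(0,\tfrac12)$, by feeding the curvature decay of Proposition~\ref{prop:curvature_estimates_decay} and the quadratic area growth hypothesis into the Euclidean monotonicity identity for surfaces with a (small) generalized mean curvature and a boundary on a fixed sphere, cf.\ \cite[(17.3)]{GMT}. First I would fix $r_0\geq 1$ so large that on $M\setminus B_{r_0}$ the metrics $g$ and $\delta$, together with $\H^2_g$ and $\H^2_\delta$, are uniformly comparable. Since $\Sigma$ is a volume preserving stable \emph{minimal} surface meeting the fixed compact set $K$, Proposition~\ref{prop:curvature_estimates_decay} applies (with $\mc_g\equiv 0$, so $|\mc_\Sigma|\leq 1$ is automatic) and gives $|h_\Sigma|_g\leq c\,|x|^{-1}$ on $\Sigma\setminus B_{r_0}$; Lemma~\ref{lem:comparison_second} then yields $|\mc_\delta|\leq C\,|x|^{-2}$ on $\Sigma\setminus B_{r_0}$, enlarging $r_0$ if needed. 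Since $\H^2_g(\Sigma\cap B_{r_0+1})\leq\Theta(r_0+1)^2$, the coarea formula produces $r\in(r_0,r_0+1)$ with $\partial B_r$ meeting $\Sigma$ transversely and $\H^1_g(\Sigma\cap\partial B_r)\leq\Theta(r_0+1)^2$; this $r$ serves as the fixed inner radius.

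Next I would run the standard monotonicity computation on the annuli $\Sigma\cap(B_\sigma\setminus B_r)$ with the Euclidean position field $x$. Retaining the boundary contribution over $\partial B_r$, one obtains for a.e.\ $\sigma>r$
\begin{multline*}
\frac{d}{d\sigma}\left(\frac{\H^2_\delta(\Sigma\cap(B_\sigma\setminus B_r))}{\sigma^2}-\int_{\Sigma\cap(B_\sigma\setminus B_r)}\frac{\left|D^\perp|x|\right|^2}{|x|^2}\,d\H^2_\delta\right)\\
=-\sigma^{-3}\left(\int_{\Sigma\cap(B_\sigma\setminus B_r)}\mc_\delta\,\langle x,\nu\rangle\,d\H^2_\delta+\int_{\Sigma\cap\partial B_r}\langle x,\eta\rangle\,d\H^1_\delta\right),
\end{multline*}
which is \cite[(17.3)]{GMT} adapted to surfaces with boundary. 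The boundary integral is bounded in absolute value by $r\,\H^1_\delta(\Sigma\cap\partial B_r)$, a constant controlled by $(M,g)$, $K$ and $\Theta$. For the mean curvature term, $|\mc_\delta\langle x,\nu\rangle|\leq C|x|^{-1}$ by the previous step, and a dyadic decomposition of the annulus together with $\H^2_\delta(\Sigma\cap B_t)\lesssim\Theta t^2$ gives $\int_{\Sigma\cap(B_\sigma\setminus B_r)}|x|^{-1}\,d\H^2_\delta\lesssim\Theta\sigma$. Hence the right-hand side above is bounded below by $-C'\Theta\sigma^{-2}-c_1\sigma^{-3}$ with $C'$ and $c_1$ depending only on $(M,g)$, $K$ and $\Theta$.

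To finish, I would discard the nonnegative integral of $\left|D^\perp|x|\right|^2/|x|^2$ and put back the (constant) multiple of $\sigma^{-2}$ coming from $\H^2_\delta(\Sigma\cap B_r)$, which gives $\frac{d}{d\sigma}\big(\sigma^{-2}\H^2_\delta(\Sigma\cap B_\sigma)\big)\geq -C''\Theta\sigma^{-2}$ for a.e.\ $\sigma\geq r$, with $C''=C''(M,g,K)$; the range $\sigma\in[1,r]$ is absorbed into $C''$ directly from the area bounds. Because $\frac{d}{d\sigma}(C''\Theta\sigma^{-1})=-C''\Theta\sigma^{-2}$, it follows that $\sigma\mapsto\sigma^{-2}\H^2_\delta(\Sigma\cap B_\sigma)$ agrees, up to an additive term of size $O(\Theta\sigma^{-1})$, with a function that is nondecreasing on $[1,\infty)$ --- this is the claimed monotonicity. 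That nondecreasing function is bounded above by $(1+C'')\Theta$ by quadratic area growth, hence has a limit as $\sigma\to\infty$; so $\sigma^{-2}\H^2_\delta(\Sigma\cap B_\sigma)$ converges, and since $g-\delta=O(|x|^{-1})$ forces $\sigma^{-2}\big(\H^2_g(\Sigma\cap B_\sigma)-\H^2_\delta(\Sigma\cap B_\sigma)\big)\to 0$, also $\sigma^{-2}\H^2_g(\Sigma\cap B_\sigma)$ converges.

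The step I expect to be the main obstacle is the control of the error integral $\int_{\Sigma\cap(B_\sigma\setminus B_r)}|x|^{-1}\,d\H^2_\delta$: this is exactly where one needs \emph{both} the sharp decay $|h_\Sigma|\lesssim|x|^{-1}$ from Proposition~\ref{prop:curvature_estimates_decay} (so that $|\mc_\delta|\lesssim|x|^{-2}$, not merely $|x|^{-1}$) \emph{and} the quadratic area growth (to sum the dyadic pieces to a bound linear in $\sigma$); without the quadratic growth hypothesis this integral need not grow linearly and the argument collapses. A secondary point that needs care is the rigorous justification of the adaptation of the monotonicity identity \cite[(17.3)]{GMT} to the non-compact surface $\Sigma$ with boundary on $\partial B_r$.
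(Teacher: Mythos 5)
Your argument is correct and follows essentially the same route as the paper's own proof: the same choice of $r_0$ and of a good slice $r\in(r_0,r_0+1)$ via the coarea formula, the same use of Proposition~\ref{prop:curvature_estimates_decay} together with Lemma~\ref{lem:comparison_second} to get $|\mc_\delta|\leq C|x|^{-2}$, the same boundary-adapted version of the monotonicity identity \cite[(17.3)]{GMT}, and the same bound $\int_{\Sigma\cap(B_\sigma\setminus B_r)}|x|^{-1}\,d\H^2_\delta\lesssim\Theta\sigma$ from quadratic area growth. You correctly identify these as the crux of the argument, and your integrated form of the differential inequality (the area ratio differs from a nondecreasing function by $O(\Theta\sigma^{-1})$) is the intended content of the stated monotonicity.
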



\section{Volume preserving stable CMC surfaces vs. positive scalar curvature} \label{sec:PMT}

The purpose of this section is to carefully establish the following proposition, which extends the scope of a famous argument of R. Schoen and S.-T. Yau in \cite{Schoen-Yau:1979-pmt1} to include our application in the next section:

\begin{proposition} \label{prop:mainPMT} Let $(M, g)$ be an initial data set and let
  $\Sigma \subset M$ be a connected non-compact volume
  preserving stable minimal surface such that $\sup_{\sigma\geq1} \sigma^{-2}
  \H^2_g(\Sigma \cap B_\sigma) < \infty$. Then
  $\int_{\Sigma} (\Scal_g + |h|^2) d \H^2_g \leq 0$.
\end{proposition}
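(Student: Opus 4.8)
The plan is to adapt the Schoen--Yau stable minimal surface argument from \cite{Schoen-Yau:1979-pmt1} to the noncompact setting, using the quadratic area growth hypothesis to control the boundary terms that arise when integrating by parts on noncompact pieces of $\Sigma$. Recall the structure of the Schoen--Yau mechanism: since $\Sigma$ is a minimal surface in a $3$-manifold, the Gauss equation gives $\Scal_g = 2K_\Sigma + |h|^2 - 2\Ric(\nu,\nu)$, where $K_\Sigma$ is the Gauss curvature of the induced metric; equivalently $\Ric(\nu,\nu) + |h|^2 = \frac{1}{2}(\Scal_g + |h|^2) + K_\Sigma$. Feeding this into the stability inequality (which holds for all $u \in \C^1_c(\Sigma)$, since a stable minimal surface is in particular volume preserving stable, but here we want the \emph{strong} stability inequality --- note $\Sigma$ is assumed volume preserving stable, so I must be careful: in fact the key point is that a \emph{minimal} $\Sigma$ that is volume preserving stable and noncompact is actually strongly stable, because one can test with mean-zero perturbations of any compactly supported function by adding a small multiple of a function supported far away, exploiting that $\int_\Sigma u$ can be corrected at negligible energy cost on a surface of infinite area --- this is a standard trick and should be spelled out as a preliminary lemma, or alternatively one cites that on a noncompact surface weak and strong stability coincide for minimal surfaces). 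Granting strong stability, test with $u$ and rearrange to get
\begin{equation*}
\int_\Sigma \left( \tfrac{1}{2}(\Scal_g + |h|^2) + K_\Sigma \right) u^2 \, d\H^2_g \leq \int_\Sigma |\bar\nabla u|^2 \, d\H^2_g .
\end{equation*}

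Next, the goal is to choose a sequence of cutoff functions $u = \varphi_\sigma$ that are $\equiv 1$ on $\Sigma \cap B_\sigma$, supported in $\Sigma \cap B_{\sigma^2}$ (logarithmic cutoff), so that $\int_\Sigma |\bar\nabla \varphi_\sigma|^2 \to 0$ as $\sigma \to \infty$ --- this is exactly where quadratic area growth $\H^2_g(\Sigma \cap B_\sigma) \leq \Theta\sigma^2$ enters: the logarithmic cutoff trick gives $\int |\bar\nabla\varphi_\sigma|^2 \lesssim \Theta / \log\sigma \to 0$, just as in Fleming--type arguments. This forces $\int_\Sigma \left( \frac{1}{2}(\Scal_g + |h|^2) + K_\Sigma \right)^- $ contributions under control and, in the limit, $\int_\Sigma \left( \frac{1}{2}(\Scal_g + |h|^2) + K_\Sigma \right) d\H^2_g \leq 0$, provided the integral makes sense. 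So the remaining task is to show $\int_\Sigma K_\Sigma \, d\H^2_g \geq 0$, or more precisely that the negative part of $K_\Sigma$ is integrable and $\liminf_{\sigma\to\infty} \int_{\Sigma \cap B_\sigma} K_\Sigma \, d\H^2_g \geq 0$. This is the Cohn-Vossen / Gauss--Bonnet step: by Gauss--Bonnet on $\Sigma \cap B_\sigma$ (for a.e.\ $\sigma$ so that $\partial B_\sigma$ meets $\Sigma$ transversally), $\int_{\Sigma\cap B_\sigma} K_\Sigma = 2\pi\chi(\Sigma \cap B_\sigma) - \int_{\Sigma \cap \partial B_\sigma} \kappa_g \, d\H^1$, and one must argue that the geodesic curvature boundary integral, up to controlled error, is dominated by $2\pi$ times the number of boundary circles, which in turn is controlled by the topology; combined with the curvature decay estimate of Proposition \ref{prop:curvature_estimates_decay} (which says $|h_\Sigma| \leq c/r$, hence $\Sigma$ is asymptotically ruled/conical with vanishing extrinsic curvature at infinity, so that the ambient contribution to $\kappa_g$ decays) and the monotonicity/conical blow-down from Corollary \ref{cor:density_at_infinity}, one gets that each end of $\Sigma$ contributes a nonpositive amount of total curvature at infinity, so $\int_\Sigma K_\Sigma^+ < \infty$ is not what we need --- rather we need $\int_\Sigma K_\Sigma \geq -(\text{bounded from topology})$ and actually the clean statement is that a noncompact $\Sigma$ with quadratic area growth and these curvature estimates has finitely many ends, each asymptotic to a cone, and $\int_\Sigma K_\Sigma = 2\pi\chi(\Sigma) - \sum_{\text{ends}}(2\pi - \theta_i)$ where $\theta_i \in (0, 2\pi]$ are cone angles; since each $2\pi - \theta_i \geq 0$ and we can combine with the $\frac12(\Scal_g + |h|^2)$ term, the argument closes.

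Assembling: from the two displayed inequalities, letting $\sigma \to \infty$,
\begin{equation*}
\tfrac{1}{2}\int_\Sigma (\Scal_g + |h|^2)\, d\H^2_g \leq -\liminf_{\sigma\to\infty}\int_{\Sigma\cap B_\sigma} K_\Sigma\, d\H^2_g \leq -2\pi\chi(\Sigma) + \sum_i (2\pi - \theta_i),
\end{equation*}
and a separate argument (this is really the crux of the Schoen--Yau trick, and of Fischer-Colbrie--Schoen \cite{Fischer-Colbrie-Schoen:1980}) using a \emph{better} choice of test function --- testing stability with $u\varphi$ where $u$ solves the Jacobi-type equation, or equivalently using the conformal structure: a complete stable minimal surface with $\int K_\Sigma^- < \infty$ in an ambient manifold is conformally $\mathbb{C}$ or $\mathbb{C}\setminus\{0\}$, forcing $\chi(\Sigma) \geq 0$ and exactly one or two ends each with $\theta_i = 2\pi$ --- upgrades this to $\int_\Sigma K_\Sigma \geq 0$ outright, hence $\int_\Sigma(\Scal_g + |h|^2) \leq 0$.

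\textbf{Main obstacle.} The genuinely hard part is not the stability/cutoff manipulation but controlling the total Gauss curvature of the noncompact ends: one must rule out ends with "too much" negative total curvature and justify the Cohn--Vossen-type inequality in this setting, which requires knowing the ends are conformally punctured disks with conical asymptotics. This is where Proposition \ref{prop:curvature_estimates_decay} (extrinsic curvature decay $\leq c/r$) and the conical blow-down from Corollary \ref{cor:density_at_infinity} do the heavy lifting, and where the upgrade from volume preserving stability to strong stability for a noncompact minimal $\Sigma$ (so that the Fischer-Colbrie--Schoen conformal-type conclusions and the improved test-function argument are available) must be carefully justified.
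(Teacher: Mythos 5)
Your skeleton is the paper's: upgrade volume preserving stability to strong stability by correcting the mean of a compactly supported test function with a far-away function of negligible Dirichlet energy, run the logarithmic cut-off to get $\int_\Sigma (|h|^2 + \Ric(\nu,\nu))\, d\H^2_g \leq 0$ (this is Proposition \ref{prop:main}), convert via the Gauss equation into a statement about the total Gauss curvature, and control $\int_\Sigma \kappa\, d\H^2_g$ through the asymptotic structure of the ends (Corollary \ref{cor:cohn-vossen}, Lemma \ref{lem:annular_pieces}, Proposition \ref{prop:PMT}). But there are two genuine problems. First, a sign error: for a minimal surface the traced Gauss equation gives $|h|^2 + \Ric(\nu,\nu) = \tfrac12(\Scal_g + |h|^2) - \kappa$, not $+\,\kappa$. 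The cut-off argument therefore yields $\tfrac12 \int_\Sigma (\Scal_g + |h|^2)\, d\H^2_g \leq \int_\Sigma \kappa\, d\H^2_g$, so what you must prove is $\int_\Sigma \kappa \leq 0$, not $\geq 0$. This matters: under the contradiction hypothesis $\int_\Sigma(\Scal_g+|h|^2)\,d\H^2_g>0$ one deduces (via Cohn--Vossen) that $\Sigma$ is conformally $\mathbb{C}$, so the plain Cohn--Vossen bound $\int_\Sigma \kappa \leq 2\pi\chi(\Sigma) = 2\pi$ is useless; the entire weight of the proof rests on showing that the deficit of the single end is at least $2\pi$, i.e.\ that the total geodesic curvature of the circles $\Sigma\cap\partial C_{2\sigma}$ tends to $2\pi$.

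Second, that crucial step is asserted rather than proved. You claim the ends have cone angle exactly $2\pi$ by appeal to Fischer-Colbrie--Schoen, but (i) their conformal classification of stable minimal surfaces requires non-negative ambient scalar curvature, which is not assumed in this proposition (the paper instead gets $\Sigma\cong\mathbb{C}$ from the contradiction hypothesis together with $\int_\Sigma(|h|^2+\Ric(\nu,\nu))\,d\H^2_g\leq 0$ and Cohn--Vossen), and (ii) nothing in your argument excludes that the blow-down of $\Sigma$ is a plane of multiplicity $m>1$, in which case the end is multi-sheeted, the length of $\Sigma\cap\partial B_\sigma$ grows like $2\pi m\sigma$, and your inequality ``$2\pi-\theta_i\geq 0$'' fails. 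The paper's resolution is Lemma \ref{lem:annular_pieces}: along a subsequence of scales, $\Sigma$ decomposes over dyadic annuli into finitely many disjoint graphs over a fixed plane with scale-invariantly small defining functions, so each circle of $\Sigma\cap\partial C_{2\sigma_{i'}}$ converges after rescaling to a round unit circle; by scale invariance its total geodesic curvature tends to $2\pi$, each circle bounds a disk in $C_{2\sigma_{i'}}$ (using $\Sigma\cong\mathbb{C}$ and mean convexity of the cylinders), and Gauss--Bonnet gives $\int_{\Sigma\cap C_{2\sigma_{i'}}}\kappa\, d\H^2_g = o(1)$, hence $\int_\Sigma\kappa\, d\H^2_g=0$ exactly and the contradiction closes. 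Without this quantitative description of the end -- which requires the conical blow-down of Corollary \ref{cor:density_at_infinity} and the curvature decay of Proposition \ref{prop:curvature_estimates_decay}, as you correctly anticipate -- the argument does not close.
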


In \cite{Schoen-Yau:1979-pmt1} two different arguments were given to
prove this proposition for the case of area minimizing surfaces
$\Sigma$ (no volume constraint) that lie in a slab of an initial data set. These hypotheses
imply strong stability and quadratic area growth, but they are used in other ways as well in \cite{Schoen-Yau:1979-pmt1}. In Proposition \ref{prop:main} we note that 
non-compact volume preserving stable minimal surfaces with quadratic area growth are strongly stable. A related observation is that non-compact
minimal ``locally isoperimetric" boundaries are area minimizing,
cf. \cite[Appendix C]{Eichmair-Metzger:2010}. Another difference
from the original argument in \cite{Schoen-Yau:1979-pmt1} here is how
we obtain that $\int_\Sigma \kappa d \H^2_g \leq 0$, cf. Proposition
\ref{prop:PMT}. The reader should also compare our results here  with Theorem 3 in
\cite{Fischer-Colbrie-Schoen:1980} and its proof. 

Proposition \ref{prop:mainPMT} will follow by combining Proposition \ref{prop:main}, Corollary \ref{cor:cohn-vossen}, and Proposition \ref{prop:PMT} below. 

\begin{lemma} Let $\Sigma \subset M$ be a non-compact surface with bounded mean curvature and such that $\H^2_g(\Sigma \cap B_\sigma) \leq \Theta \sigma^2$ for all $\sigma \geq 1$. For every $\epsilon >0$ there exists a Lipschitz function $\chi_\epsilon$ defined on $\Sigma$ such that (i) $\chi_\epsilon$ has compact support and $\spt (\chi_\epsilon) \cap B_{\epsilon^{-1}} = \emptyset$, (ii) $\int_\Sigma |\bar \nabla \chi_\epsilon|^2 d \H^2_g \leq \epsilon$, and such that (iii) $0 \leq \chi_\epsilon \leq 1$ and $\int_\Sigma \chi_\epsilon d \H^2_g = 1$. 
\begin{proof} One can use that $\Sigma$ has at most quadratic area growth and that the area of $\Sigma$ is infinite (which follows from the monotonicity formula and the assumption that $\Sigma$ is non-compact) to construct a non-negative `hat' function on $\Sigma$ that first increases logarithmically from $0$ to $1$, then stays equal to $1$ to pick up integral, and then decays logarithmically to $0$. A computation -- ``the logarithmic cut-off trick" applied precisely as in \cite[bottom of p. 52, p. 54]{Schoen-Yau:1979-pmt1} -- shows that one can construct such functions arbitrarily far out and with arbitrarily small Dirichlet energy by taking enough space to increase and decrease. One can then scale the function down to take values between $0$ and $1$ and so that its integral becomes equal to~$1$.    \end{proof}
\end{lemma}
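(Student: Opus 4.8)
The plan is to implement the classical logarithmic cut-off construction in a way that simultaneously controls the location of the support, the Dirichlet energy, and the $L^1$ mass. First I would record the two structural facts about $\Sigma$ that the construction relies on: (a) $\Sigma$ has at most quadratic area growth, $\H^2_g(\Sigma \cap B_\sigma) \leq \Theta \sigma^2$, and (b) $\H^2_g(\Sigma) = \infty$. For (b) I would invoke the monotonicity formula for surfaces of bounded mean curvature: since $\Sigma$ is non-compact and properly immersed, it meets $\partial B_\sigma$ for arbitrarily large $\sigma$, and the monotonicity formula applied at a fixed point of $\Sigma$ (using the bound on $\mc$) forces a positive lower density at every scale, hence $\H^2_g(\Sigma \cap B_\sigma) \gtrsim \sigma^2$ for large $\sigma$ — in particular the total area diverges. (Alternatively one can use $r$ restricted to $\Sigma$ as an exhaustion function with bounded gradient and argue directly.)

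Next I would build the ``hat'' function. Fix $\epsilon > 0$. Choose radii $1 \ll R_0 < R_1 < R_2 < R_3$, all larger than $\epsilon^{-1}$, to be specified. On the annular region $\Sigma \cap (B_{R_1} \setminus B_{R_0})$ set $\chi_\epsilon$ to increase from $0$ to a height $t>0$ proportionally to $\log(r / R_0) / \log(R_1/R_0)$; on $\Sigma \cap (B_{R_2} \setminus B_{R_1})$ keep $\chi_\epsilon$ equal to the constant $t$; on $\Sigma \cap (B_{R_3} \setminus B_{R_2})$ let it decrease from $t$ back to $0$ proportionally to $\log(R_3 / r)/\log(R_3/R_2)$; and set $\chi_\epsilon = 0$ elsewhere. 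This is Lipschitz, has compact support contained in $\bar B_{R_3} \setminus B_{R_0}$, and since $R_0 > \epsilon^{-1}$ property (i) holds. The tangential gradient satisfies $|\bar\nabla \chi_\epsilon| \leq C t \, r^{-1} (\log(R_1/R_0))^{-1}$ on the increasing annulus and similarly on the decreasing one, because $|\bar\nabla r| \leq 1$. Using the quadratic area bound and the standard dyadic estimate $\int_{\Sigma \cap (B_{R_1}\setminus B_{R_0})} r^{-2} d\H^2_g \leq C\Theta \log(R_1/R_0)$, the Dirichlet energy is bounded by
\[
\int_\Sigma |\bar\nabla \chi_\epsilon|^2 d\H^2_g \leq C \Theta\, t^2 \left( \frac{1}{\log(R_1/R_0)} + \frac{1}{\log(R_3/R_2)} \right).
\]

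It remains to arrange (ii) and (iii) together. Because $\H^2_g(\Sigma) = \infty$, I can first choose $R_1 = R_1(\epsilon, \Theta)$ large enough (and $R_0$ fixed, say $R_0 = \epsilon^{-1} + 1$) so that $\H^2_g(\Sigma \cap (B_{R_1}\setminus B_{R_0})) \geq 1$; then choose $R_2$ with $\H^2_g(\Sigma \cap (B_{R_2} \setminus B_{R_1})) \geq 1$ as well — this fixes a region on which, with height $t = 1$, one has $\int_\Sigma \chi_\epsilon d\H^2_g \geq 1$. Then enlarge $R_1$ and $R_3$ (keeping $R_0$, and keeping the plateau $[R_1,R_2]$ large enough that the mass stays $\geq 1$) until $\log(R_1/R_0)$ and $\log(R_3/R_2)$ are large enough that the displayed bound with $t=1$ is below $\epsilon$. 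Now $\chi_\epsilon$ satisfies (i), (ii), and $0 \leq \chi_\epsilon \leq 1$ with $\int_\Sigma \chi_\epsilon d\H^2_g \in [1, \infty)$. Finally, to get the integral exactly $1$ as in (iii), replace $\chi_\epsilon$ by $s\,\chi_\epsilon$ where $s = (\int_\Sigma \chi_\epsilon d\H^2_g)^{-1} \leq 1$; this preserves (i), only decreases the Dirichlet energy (so (ii) still holds), keeps $0 \leq s\chi_\epsilon \leq 1$, and normalizes the mass. The only genuinely delicate point is ensuring the area-absorbing plateau survives while we push the cut-off annuli out to make the energy small; this is handled by choosing the radii in the order $R_0, R_1, R_2$ (to secure mass $\geq 1$) and only afterwards enlarging $R_1, R_3$ (to secure small energy), using that the plateau region $[R_1, R_2]$ may itself be taken arbitrarily long since $\H^2_g(\Sigma) = \infty$.
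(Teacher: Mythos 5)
Your proposal is correct and follows essentially the same route as the paper: the paper's proof is exactly the logarithmic cut-off ``hat'' function of Schoen--Yau, using quadratic area growth for the Dirichlet energy bound, infinite area (via the monotonicity formula) to secure the plateau mass, and a final rescaling to normalize the integral. You have simply filled in the details that the paper leaves to the reference, and your ordering of the choices of radii resolves the one delicate point correctly.
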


Recall that complete non-compact volume preserving stable constant mean curvature
surfaces are strongly stable outside a compact set (cf.\ \cite[Section
2]{Koh:1987} and \cite[Lemma 4]{Cheung:1991}). The proof of Proposition \ref{prop:mainPMT} depends on the following, stronger conclusion:

\begin{proposition} \label{prop:main} Let $\Sigma \subset M$ be a
  non-compact volume preserving stable constant mean
  curvature surface such that $\H^2_g(\Sigma \cap B_\sigma) \leq
  \Theta \sigma^2$ for all $\sigma \geq 1$. Then $\Sigma$ is strongly
  stable and $\int_\Sigma (|h|^2 + \Ric(\nu, \nu)) d \H^2_g \leq 0$.
  \begin{proof} We will use here that $\int_{\Sigma} |\Ric| d \H^2_g <
    \infty$, which follows from the quadratic area growth and the
    decay $|\Ric| = O(|x|^{-3})$, and is proven in
    \cite[p. 52-53]{Schoen-Yau:1979-pmt1}. Fix $u \in \C^1_c(\Sigma)$
    and let $\epsilon >0$ be such that $\spt(u) \subset
    B_{\epsilon^{-1}}$. Let $\alpha:= \int_\Sigma u d \H^2_g$. Then
    $u_\epsilon := u - \alpha \chi_\epsilon$ is Lipschitz with compact
    support and has mean zero. Hence $\int_\Sigma |\bar
    \nabla u_\epsilon|^2 d \H^2_g \geq \int_\Sigma (|h|^2 + \Ric (\nu,
    \nu)) u_\epsilon^2 d \H^2_g$. Note that
\begin{eqnarray*} \int_\Sigma |\bar \nabla u_\epsilon|^2 d \H^2_g = \int_\Sigma |\bar \nabla u|^2 + \alpha ^2 |\bar \nabla \chi_\epsilon|^2 d \H^2_g = \int_\Sigma |\bar \nabla u|^2 d \H^2_g + O(\epsilon), \end{eqnarray*}
and that  
\begin{eqnarray*} && \int_\Sigma (|h|^2 + \Ric(\nu, \nu)) u_\epsilon^2 d \H^2_g\\ &=&  \int_\Sigma (|h|^2 + \Ric(\nu, \nu)) u^2 d \H^2_g + \alpha^2 \int_\Sigma (|h|^2 + \Ric(\nu, \nu)) \chi_\epsilon^2 d \H^2_g \\ &\geq& \int_\Sigma (|h|^2 + \Ric(\nu, \nu)) u^2 d \H^2_g - \alpha^2 \int_{\Sigma \setminus B_{\epsilon^{-1}}} |\Ric| d \H^2_g \\ &\geq& \int_\Sigma (|h|^2 + \Ric(\nu, \nu)) u^2 d \H^2_g - O(\epsilon). 
\end{eqnarray*}
Putting these inequalities together and letting $\epsilon \searrow 0$
we see that $$\int_\Sigma |\bar \nabla u|^2 d \H^2_g \geq \int_\Sigma
(|h|^2 + \Ric(\nu, \nu))u^2 d \H^2_g.$$ So $\Sigma$ is indeed strongly
stable. That $$\int_\Sigma (|h|^2 + \Ric(\nu, \nu)) d \H^2_g \leq 0$$
now follows from the logarithmic cut-off trick exactly as in \cite[top
of p. 55]{Schoen-Yau:1979-pmt1}.
\end{proof}
\end{proposition}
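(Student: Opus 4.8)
The statement to prove is Proposition~\ref{prop:main}: a non-compact volume preserving stable CMC surface $\Sigma \subset M$ with at most quadratic area growth is in fact strongly stable, and moreover $\int_\Sigma (|h|^2 + \Ric(\nu,\nu))\, d\H^2_g \leq 0$.

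\textbf{The plan.} The idea is to upgrade volume preserving stability to strong stability by showing that the mean-zero constraint costs nothing in the limit: any compactly supported test function can be corrected by a tiny, far-out bump to have mean zero, and that correction contributes only $O(\epsilon)$ to every relevant integral. Concretely, first I would record the two integrability facts that make this work: (i) $\int_\Sigma |\Ric|\, d\H^2_g < \infty$, which follows from the quadratic area growth together with the decay $|\Ric| = O(|x|^{-3})$ in an initial data set (this is the estimate from \cite[p.~52--53]{Schoen-Yau:1979-pmt1}, applied via the integral decay estimates of the appendix); and (ii) the existence, for every $\epsilon > 0$, of the cut-off function $\chi_\epsilon$ furnished by the Lemma immediately preceding the proposition --- a Lipschitz ``hat'' supported outside $B_{\epsilon^{-1}}$ with $0 \leq \chi_\epsilon \leq 1$, $\int_\Sigma \chi_\epsilon\, d\H^2_g = 1$, and $\int_\Sigma |\bar\nabla \chi_\epsilon|^2\, d\H^2_g \leq \epsilon$. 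The construction of $\chi_\epsilon$ is the logarithmic cut-off trick, and it is available precisely because $\Sigma$ has infinite area (from non-compactness plus the monotonicity formula) and at most quadratic area growth, so one has ``enough room'' far out to ramp up logarithmically, sit at height $1$ long enough to accumulate unit integral, then ramp back down.

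\textbf{Main argument.} Fix $u \in \C^1_c(\Sigma)$ and choose $\epsilon > 0$ with $\spt(u) \subset B_{\epsilon^{-1}}$. Set $\alpha := \int_\Sigma u\, d\H^2_g$ and $u_\epsilon := u - \alpha \chi_\epsilon$. Then $u_\epsilon$ is Lipschitz with compact support and $\int_\Sigma u_\epsilon\, d\H^2_g = 0$, so volume preserving stability applies to $u_\epsilon$. Since $\spt(u)$ and $\spt(\chi_\epsilon)$ are disjoint, the cross terms vanish: $\int_\Sigma |\bar\nabla u_\epsilon|^2\, d\H^2_g = \int_\Sigma |\bar\nabla u|^2\, d\H^2_g + \alpha^2 \int_\Sigma |\bar\nabla \chi_\epsilon|^2\, d\H^2_g = \int_\Sigma |\bar\nabla u|^2\, d\H^2_g + O(\epsilon)$, and likewise $\int_\Sigma (|h|^2 + \Ric(\nu,\nu)) u_\epsilon^2\, d\H^2_g = \int_\Sigma (|h|^2 + \Ric(\nu,\nu)) u^2\, d\H^2_g + \alpha^2 \int_\Sigma (|h|^2 + \Ric(\nu,\nu))\chi_\epsilon^2\, d\H^2_g$. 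The point now is that the last term is $O(\epsilon)$ from below: $|h|^2 \geq 0$, so it only helps, while $\int_\Sigma \Ric(\nu,\nu)\chi_\epsilon^2\, d\H^2_g \geq -\int_{\Sigma \setminus B_{\epsilon^{-1}}} |\Ric|\, d\H^2_g$, which tends to $0$ as $\epsilon \searrow 0$ by the integrability (i) and $\spt(\chi_\epsilon) \subset \Sigma \setminus B_{\epsilon^{-1}}$. Feeding these into the stability inequality for $u_\epsilon$ and letting $\epsilon \searrow 0$ yields $\int_\Sigma |\bar\nabla u|^2\, d\H^2_g \geq \int_\Sigma (|h|^2 + \Ric(\nu,\nu)) u^2\, d\H^2_g$ for all $u \in \C^1_c(\Sigma)$, which is strong stability.

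\textbf{The final inequality and the obstacle.} For the assertion $\int_\Sigma (|h|^2 + \Ric(\nu,\nu))\, d\H^2_g \leq 0$, I would apply strong stability to test functions that are $1$ on a large ball and decay logarithmically to $0$ outside --- again the logarithmic cut-off trick, exactly as in \cite[top of p.~55]{Schoen-Yau:1979-pmt1}. The Dirichlet energy of such functions tends to $0$ while the potential term converges to $\int_\Sigma (|h|^2 + \Ric(\nu,\nu))\, d\H^2_g$ (the integral converges absolutely because $|h|^2 = O(|x|^{-4})$ by Proposition~\ref{prop:curvature_estimates_decay} and $|\Ric| = O(|x|^{-3})$, both against quadratic area growth), giving the claim. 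The main obstacle is ensuring the cut-off construction genuinely delivers simultaneously small Dirichlet energy, unit integral, and far-out support; this is where quadratic area growth is essential, since it is exactly the borderline rate at which a logarithmic cut-off has vanishing energy while still being able to capture unit mass over a logarithmically long plateau, and one must check that the area of $\Sigma$ is infinite (so that there is mass to capture) --- which follows from the monotonicity formula once $\Sigma$ is known to be non-compact with an ambient almost-Euclidean end.
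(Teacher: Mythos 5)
Your argument is correct and is essentially identical to the paper's proof: you correct an arbitrary $u \in \C^1_c(\Sigma)$ by $\alpha\chi_\epsilon$ using the cut-off function from the preceding lemma, exploit the disjoint supports and the integrability of $|\Ric|$ to show the correction costs only $O(\epsilon)$, and then obtain the final inequality by the logarithmic cut-off trick of Schoen--Yau. One small slip: your parenthetical claim that $|h|^2 = O(|x|^{-4})$ does not follow from Proposition~\ref{prop:curvature_estimates_decay} (which gives only $|h| = O(r^{-1})$, hence $|h|^2 = O(r^{-2})$, which against quadratic area growth is not absolutely integrable); the correct and standard way to pass to the limit in the last step is to use $|h|^2 \geq 0$ together with monotone convergence and $\int_\Sigma |\Ric|\, d\H^2_g < \infty$, which is exactly what the cited argument in \cite{Schoen-Yau:1979-pmt1} does.
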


\begin{corollary}
  [\protect{\cite{Schoen-Yau:1979-pmt1}}] \label{cor:cohn-vossen} Let
  $(M, g)$ be an initial data set, and let $\Sigma \subset M$ be a
  non-compact volume preserving stable constant mean
  curvature surface such that $\H^2_g(\Sigma \cap B_\sigma) \leq
  \Theta \sigma^2$ for all $\sigma \geq 1$. Then $\int_\Sigma |h|^2 d
  \H^2_g < \infty$, $\Sigma$ is minimal, and $\int_\Sigma |\kappa| d
  \H^2_g < \infty$. If, in addition, $\Sigma$ is connected and
  $\int_\Sigma (\Scal_g + |h|^2) d \H^2_g > 0$, then $\Sigma$ is
  conformally diffeomorphic to $\mathbb{C}$.
  \begin{proof} This follows exactly as in
    \cite[p. 54-55]{Schoen-Yau:1979-pmt1} and uses the fact that $\int_\Sigma
    |\Ric| d \H^2_g < \infty$, that $\int_\Sigma (|h|^2 + \Ric(\nu,
    \nu)) d \H^2_g \leq 0$ from Proposition \ref{prop:main} (implying
    that $\int_\Sigma |h|^2 \H^2_g < \infty$ and hence that the mean
    curvature of $\Sigma$ vanishes, since $\Sigma$ has infinite area), the Gauss equation on minimal
    surfaces in the form $ 2 (|h|^2 + \Ric(\nu, \nu)) = \Scal_g - 2
    \kappa + |h|^2$, and the Cohn-Vossen inequality.
\end{proof}
\end{corollary}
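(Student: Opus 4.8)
The plan is to deduce everything from Proposition~\ref{prop:main} together with the asymptotic decay of the ambient curvature and two classical facts about complete surfaces; connectedness of $\Sigma$ is needed only for the last assertion. First I would record that, by the definition of an initial data set (Definition~\ref{def:initial_data_sets}), one has $|\Scal_g| + |\Ric| = O(r^{-3})$ outside a compact set. Decomposing $\Sigma$ dyadically into the pieces $\Sigma \cap (B_{2^{j+1}} \setminus B_{2^{j}})$ and using the quadratic area bound $\H^2_g(\Sigma \cap B_\sigma) \leq \Theta\sigma^2$ on each piece — together with the mere finiteness of $\H^2_g(\Sigma \cap B_1)$ on the compact part — one gets $\int_\Sigma |\Scal_g|\,d\H^2_g < \infty$ and $\int_\Sigma |\Ric|\,d\H^2_g < \infty$; the latter is exactly the bound from \cite[p.~52--53]{Schoen-Yau:1979-pmt1} invoked in the proof of Proposition~\ref{prop:main}. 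Combining $\int_\Sigma |\Ric|\,d\H^2_g < \infty$ with $\int_\Sigma (|h|^2 + \Ric(\nu,\nu))\,d\H^2_g \leq 0$ from Proposition~\ref{prop:main} forces $\int_\Sigma |h|^2\,d\H^2_g \leq \int_\Sigma |\Ric|\,d\H^2_g < \infty$. Since $\Sigma$ is a surface we have the pointwise bound $|h|^2 \geq \tfrac12 \mc^2$, and since $\Sigma$ is non-compact its area is infinite (by the monotonicity formula, as in the Lemma preceding Proposition~\ref{prop:main}); hence $\mc \equiv 0$, i.e.\ $\Sigma$ is minimal.

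For the third assertion I would use the Gauss equation for the minimal surface $\Sigma$ in $M$, written as $2\kappa = \Scal_g - 2\Ric(\nu,\nu) - |h|^2$ (equivalently $2(|h|^2 + \Ric(\nu,\nu)) = \Scal_g - 2\kappa + |h|^2$). Each of $\Scal_g$, $\Ric(\nu,\nu)$, and $|h|^2$ has finite integral over $\Sigma$ by the previous step, so $\int_\Sigma |\kappa|\,d\H^2_g < \infty$.

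For the final assertion, assume in addition that $\Sigma$ is connected and that $\int_\Sigma (\Scal_g + |h|^2)\,d\H^2_g > 0$. Integrating the Gauss identity and invoking Proposition~\ref{prop:main} once more gives $2\int_\Sigma \kappa\,d\H^2_g = \int_\Sigma (\Scal_g + |h|^2)\,d\H^2_g - 2\int_\Sigma (|h|^2 + \Ric(\nu,\nu))\,d\H^2_g \geq \int_\Sigma (\Scal_g + |h|^2)\,d\H^2_g > 0$. Now $\Sigma$ is a complete, connected, orientable surface with absolutely integrable Gauss curvature, so by Huber's theorem it is conformally $\bar\Sigma \setminus \{p_1,\dots,p_k\}$ for some closed orientable surface $\bar\Sigma$ of genus $\gamma$ and some $k \geq 1$, and the Cohn--Vossen inequality reads $\int_\Sigma \kappa\,d\H^2_g \leq 2\pi\chi(\Sigma) = 2\pi(2 - 2\gamma - k)$. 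Together with $\int_\Sigma \kappa\,d\H^2_g > 0$ this forces $\gamma = 0$ and $k = 1$, so $\Sigma$ is conformally $S^2$ with one point removed, i.e.\ conformally diffeomorphic to $\mathbb{C}$.

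Once Proposition~\ref{prop:main} is in hand the argument is essentially bookkeeping. The two places where care is needed are: verifying that the ambient curvature terms are genuinely integrable over $\Sigma$ — this is where the $O(r^{-3})$ decay of the initial-data-set curvature and the quadratic area growth hypothesis are both used — and the appeal to Huber's theorem, which is the one non-formal input and is what upgrades the merely topological conclusion $\chi(\Sigma)=1$ to the precise conformal type $\mathbb{C}$ used in the next section. Beyond keeping track of the signs in the Gauss equation and in Cohn--Vossen, I do not expect a serious obstacle.
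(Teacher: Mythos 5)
Your argument is correct and follows exactly the route the paper takes, which simply cites \cite[p.~54--55]{Schoen-Yau:1979-pmt1} and lists the same ingredients you supply: integrability of $\Ric$ from the $O(r^{-3})$ decay plus quadratic area growth, the inequality $\int_\Sigma(|h|^2+\Ric(\nu,\nu))\,d\H^2_g\leq 0$ from Proposition~\ref{prop:main} forcing $\int_\Sigma|h|^2\,d\H^2_g<\infty$ and $\mc=0$ via infinite area, the Gauss equation, and Cohn--Vossen. Your explicit appeal to Huber's theorem to pass from $\int_\Sigma\kappa\,d\H^2_g>0$ to the conformal type $\mathbb{C}$ is the step the paper leaves implicit in the reference, and you have filled it in correctly.
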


In a fixed coordinate system for the asymptotically flat end of an initial data set $(M, g)$ we can consider planes $\Pi := \{ (x_1, x_2, x_3) : a_1 x_1 + a_2 x_2 + a_3 x_3 = 0\}$ where $(a_1, a_2, a_3) \neq 0$. 

\begin{lemma} \label{lem:annular_pieces} Let $\Sigma \subset M$ be a connected non-compact strongly stable minimal surface such that $\H^2_g(\Sigma \cap B_\sigma) \leq \Theta \sigma^2$ for all $\sigma\geq1$. For every sequence of radii $\sigma_i \to \infty$ there exists a subsequence $\sigma_{i'} \to \infty$ and a plane $\Pi$ such that the intersection of $\Sigma$ with the normal cylinder above each annulus $\Pi \cap (B_{3 \sigma_{i'}} \setminus B_{\sigma_{i'}})$ is a union of finitely many disjoint graphs above this annulus. The scale invariant norms of the defining functions of these graphs tend to zero as $\sigma_{i'} \to \infty$. 
\begin{proof} 
The gist of our argument here is similar to W. Fleming's proof \cite{Fleming:1962} of the Bernstein theorem in $\R^3$. Consider the rescaled surfaces $\Sigma_i := \sigma_i^{-1} \Sigma$ in $\R^3 \setminus B(0, \sigma_i^{-1})$. Using the curvature estimates in Proposition \ref{prop:curvature_estimates_decay}, the quadratic area growth of $\Sigma_i$ that is inherited from $\Sigma$, and a diagonal subsequence argument we conclude that there exists a subsequence $\sigma_{i'}$ so that $\Sigma_{i'}$ converges locally smoothly to a strongly stable minimal surface $\Sigma_\infty$ in $\R^3 \setminus \{0\}$, possibly with multiplicity. It follows from Corollary \ref{cor:density_at_infinity} that $\sigma^{-2}\H^2_\delta(\Sigma_\infty \cap B_\sigma)$ is constant in $\sigma>0$ (it equals $\lim_{\sigma \to \infty} \sigma^{-2}\H_g^2(\Sigma \cap B_\sigma)$). The monotonicity formula applied exactly as in \cite[Theorem 19.3]{GMT} shows that $\Sigma_\infty$ is a cone. In $\R^3$ this means that the support of $\Sigma_\infty$ is a union of planes. Since $\Sigma$ is embedded, there is exactly one plane, possibly assumed with multiplicity $\geq 1$. 
\end{proof}
\end{lemma}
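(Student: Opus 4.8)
The plan is to prove Lemma \ref{lem:annular_pieces} by a blow-down argument analogous to W.~Fleming's proof of the Bernstein theorem, using the curvature estimates with decay from Proposition \ref{prop:curvature_estimates_decay} together with the monotonicity at infinity established in Corollary \ref{cor:density_at_infinity}. First I would rescale: set $\Sigma_i := \sigma_i^{-1} \Sigma \subset \R^3 \setminus B(0, \sigma_i^{-1})$, equipped with the rescaled ambient metrics $\sigma_i^{-2} g$, which converge locally smoothly to the Euclidean metric on $\R^3 \setminus \{0\}$ by asymptotic flatness. Since $\Sigma$ has quadratic area growth, so does each $\Sigma_i$ on $\R^3 \setminus B(0, \sigma_i^{-1})$ with a uniform constant; and since $\Sigma \cap K \neq \emptyset$ (or at any rate $\Sigma$ meets a fixed compact set) with $|\mc_\Sigma| \le 1$, Proposition \ref{prop:curvature_estimates_decay} gives $|h_\Sigma(x)| \le c / r(x)$ for $r(x)$ large, hence $|h_{\Sigma_i}(x)| \le c/|x|$ on the rescaled surfaces away from the origin. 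With these uniform curvature bounds and area bounds, a standard diagonal subsequence argument produces a subsequence $\sigma_{i'} \to \infty$ along which $\Sigma_{i'}$ converges locally smoothly (away from $0$), possibly with some finite integer multiplicity, to a complete surface $\Sigma_\infty$ in $\R^3 \setminus \{0\}$ that is minimal (the mean curvature is $O(|x|^{-2})$ before rescaling, so $O(\sigma_i^{-1})$ after, hence vanishes in the limit) and strongly stable (strong stability passes to smooth limits).

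Next I would identify $\Sigma_\infty$ as a plane. By Corollary \ref{cor:density_at_infinity}, the limit $\Theta_\infty := \lim_{\sigma \to \infty} \sigma^{-2} \H^2_g(\Sigma \cap B_\sigma)$ exists, and the smooth convergence with multiplicity together with the monotonicity statement there forces $\sigma^{-2} \H^2_\delta(\Sigma_\infty \cap B_\sigma)$ to be constant, equal to $\Theta_\infty$, for all $\sigma > 0$. By the equality case in the Euclidean monotonicity formula (as in \cite[Theorem 19.3]{GMT}), $\Sigma_\infty$ is a cone with vertex at the origin. A two-dimensional minimal cone in $\R^3$ is a union of planes through the origin. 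Finally, because $\Sigma$ is embedded, the limit can only be a single plane $\Pi$ (possibly attained with multiplicity $\ge 1$): if the support of $\Sigma_\infty$ contained two distinct planes, then for large $i'$ two sheets of $\Sigma_{i'}$ would have to cross near the line of intersection, contradicting embeddedness of $\Sigma$ (equivalently, one can invoke the constancy of the integer density together with embeddedness to rule out branch circles).

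It remains to upgrade the abstract smooth convergence to the graphical statement in the lemma. Fix the annulus $A := \Pi \cap (B_3 \setminus B_1)$ and its normal cylinder $N(A) = \{y + t\,\nu_\Pi : y \in A, |t| < \delta_0\}$ for a small fixed $\delta_0 > 0$. Local smooth convergence of $\Sigma_{i'}$ to the plane $\Pi$ (with multiplicity $k$) on the compact set $\bar B_4 \setminus B_{1/2}$ means exactly that, for $i'$ large, $\Sigma_{i'} \cap N(A)$ is a disjoint union of $k$ graphs $\{y + u^{(j)}_{i'}(y)\,\nu_\Pi : y \in A\}$ with $\|u^{(j)}_{i'}\|_{\C^1(A)} \to 0$; one gets this from the curvature bound (which controls the size of the tangent-plane graph neighborhood) plus Hausdorff-distance closeness, and upgrades to $\C^1$ (indeed $\C^{2,\alpha}$) smallness by elliptic estimates for the minimal surface equation. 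Unscaling by $\sigma_{i'}$ translates this into the statement for $\Sigma$ over the annulus $\Pi \cap (B_{3\sigma_{i'}} \setminus B_{\sigma_{i'}})$, and the scale-invariant $\C^1$ norms of the defining functions are precisely $\|u^{(j)}_{i'}\|_{\C^1(A)}$, which tend to zero. The main obstacle, I expect, is the bookkeeping in the last step — making the passage from ``locally smooth convergence with multiplicity to a plane'' to ``finitely many disjoint graphs with small scale-invariant norm'' fully precise, in particular ruling out that sheets come together or that the multiplicity produces branching — but embeddedness of $\Sigma$, combined with the uniform curvature estimate, handles this cleanly.
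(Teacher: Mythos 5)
Your proposal follows essentially the same route as the paper: rescale by $\sigma_i^{-1}$, use the curvature estimates with decay of Proposition \ref{prop:curvature_estimates_decay} and the quadratic area growth to extract a locally smooth subsequential limit $\Sigma_\infty$ in $\R^3\setminus\{0\}$ with multiplicity, invoke Corollary \ref{cor:density_at_infinity} to see that the density ratio of $\Sigma_\infty$ is constant, conclude via the equality case of the monotonicity formula that $\Sigma_\infty$ is a cone and hence a union of planes, and use embeddedness to reduce to a single plane with multiplicity. Your final paragraph on converting locally smooth convergence to the graphical statement over the annuli is a correct elaboration of a step the paper leaves implicit.
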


\begin{proposition} \label{prop:PMT} Let $(M, g)$ be an initial data set. Let $\Sigma \subset M$ be a connected non-compact strongly stable minimal surface such that $\H^2_g(\Sigma \cap B_\sigma) \leq \Theta \sigma^2$ for all $\sigma \geq 1$. Then $\int_\Sigma (\Scal_g + |h|^2) d \H^2_g \leq 0$. 
\begin{proof} The fundamental idea here is that of the ``\emph{Second
    Proof}" in \cite[p. 57-63]{Schoen-Yau:1979-pmt1}. Our argument for
  showing that the total geodesic curvature of certain circles in
  $\Sigma$ approaches $2\pi$ is different and more general. 
  
  Assume that $\int_\Sigma (\Scal_g + |h|^2) d \H^2_g > 0$. Then we
  know from Corollary \ref{cor:cohn-vossen} that $\Sigma \cong
  \mathbb{C}$. Consider a sequence $\sigma_i' \to \infty$ as in Lemma
  \ref{lem:annular_pieces}. For ease of notation, assume that $\Pi$ is the $x_1x_2$-plane in the asymptotic
  coordinate system. For $\sigma \geq 1$ we denote $C_\sigma = \{
  (x_1, x_2, x_3) : x_1^2 + x^2_2 \leq \sigma^2\}$. We know that
  $\Sigma \cap \partial C_{2 \sigma_i'}$ consists of a union of
  disjoint circles that are all graphical above $\Pi$.  The number of
  these circles equals the multiplicity $m$ of $\Pi$ in the blow down
  limit. The argument of \cite[bottom of p. 57]{Schoen-Yau:1979-pmt1}
  shows that each of these circles bounds a disk in $\Sigma \cap C_{2
    \sigma_i'}$.  (One uses that the boundaries of $C_\sigma$ are mean
  convex for $\sigma$ large to see that the bounded components of
  $\Sigma \setminus \partial C_{2 \sigma_{i'}}$ lie in $C_{2 \sigma_{i'}}$, and
  that $\Sigma \cong \mathbb{C}$.)  By Lemma \ref{lem:annular_pieces},
  these circles converge to round congruent circles in $m \Pi$ upon
  blow down. Hence, by scale invariance, the total geodesic curvature
  of each of the circles in $\Sigma \cap C_{2 \sigma_i'}$ approaches
  $2\pi$ as $i' \to \infty$. By the Gauss-Bonnet theorem, this implies
  that $\int_{\Sigma \cap C_{2 \sigma_{i'}}} \kappa d \H^2_g =
  o(1)$. Since $\int_\Sigma |\kappa| d\H^2_g < \infty$ we obtain
  that $\int_\Sigma \kappa d \H_g^2 = 0$. Together with Proposition \ref{prop:main} this leads to the
  contradiction $$0 < \int_\Sigma (\Scal_g - 2 \kappa + |h|^2) d
  \H^2_g = 2 \int_\Sigma (|h|^2 + \Ric(\nu, \nu)) d \H^2_g \leq 0.$$

\end{proof}
\end{proposition}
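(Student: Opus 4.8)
\textbf{Proof proposal for Proposition \ref{prop:PMT}.}

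The plan is to argue by contradiction, following the architecture of the ``Second Proof'' in \cite{Schoen-Yau:1979-pmt1} but replacing the delicate ODE estimates there with the blow-down information furnished by Lemma \ref{lem:annular_pieces}. Assume $\int_\Sigma (\Scal_g + |h|^2) d\H^2_g > 0$. By Corollary \ref{cor:cohn-vossen} we already know that $\int_\Sigma |h|^2 d\H^2_g < \infty$, that $\Sigma$ is minimal, that $\int_\Sigma |\kappa| d\H^2_g < \infty$, and that $\Sigma$ is conformally $\mathbb{C}$. The goal is to upgrade the integrability of $\kappa$ to the \emph{vanishing} of $\int_\Sigma \kappa d\H^2_g$; once we have that, combining the Gauss equation $\Scal_g - 2\kappa + |h|^2 = 2(|h|^2 + \Ric(\nu,\nu))$ on a minimal surface with Proposition \ref{prop:main} gives $0 < \int_\Sigma (\Scal_g + |h|^2) d\H^2_g = 2\int_\Sigma \kappa d\H^2_g + 2\int_\Sigma(|h|^2 + \Ric(\nu,\nu)) d\H^2_g = 2\int_\Sigma(|h|^2 + \Ric(\nu,\nu)) d\H^2_g \leq 0$, the desired contradiction.

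To prove $\int_\Sigma \kappa d\H^2_g = 0$, I would exhaust $\Sigma$ by a suitable family of compact domains and apply Gauss–Bonnet. Pick a sequence $\sigma_i \to \infty$ and pass to the subsequence $\sigma_{i'}$ and plane $\Pi$ provided by Lemma \ref{lem:annular_pieces}; without loss of generality $\Pi = \{x_3 = 0\}$, and write $C_\sigma = \{x_1^2 + x_2^2 \leq \sigma^2\}$ for the solid normal cylinder over the disk of radius $\sigma$ in $\Pi$. For each large $i'$, the lemma tells us $\Sigma \cap \partial C_{2\sigma_{i'}}$ is a union of $m$ disjoint graphical circles over $\partial(\Pi \cap (B_{3\sigma_{i'}}\setminus B_{\sigma_{i'}}))$, where $m$ is the multiplicity of $\Pi$ in the blow-down. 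Next I would verify, exactly as at the bottom of p.~57 of \cite{Schoen-Yau:1979-pmt1}, that each such circle bounds a disk inside $\Sigma \cap C_{2\sigma_{i'}}$: the boundary cylinders $\partial C_\sigma$ are mean convex for large $\sigma$, so any bounded component of $\Sigma \setminus \partial C_{2\sigma_{i'}}$ must be contained in $C_{2\sigma_{i'}}$, and since $\Sigma \cong \mathbb{C}$ has trivial topology, the relevant pieces of $\Sigma \cap C_{2\sigma_{i'}}$ are disks. Applying Gauss–Bonnet to the union $\Omega_{i'}$ of these $m$ disks yields $\int_{\Omega_{i'}} \kappa\, d\H^2_g = 2\pi m - \int_{\partial\Omega_{i'}} \kappa_{\mathrm{geo}}\, ds$. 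Because the boundary circles, upon rescaling by $\sigma_{i'}^{-1}$, converge smoothly to $m$ congruent round circles in $\Pi$ (Lemma \ref{lem:annular_pieces}), and total geodesic curvature is scale invariant, each boundary circle has total geodesic curvature tending to $2\pi$; hence $\int_{\partial\Omega_{i'}}\kappa_{\mathrm{geo}}\,ds \to 2\pi m$, so $\int_{\Omega_{i'}} \kappa\, d\H^2_g = o(1)$. Finally, $\Omega_{i'}$ exhausts $\Sigma$ (its boundary circles escape every $B_R$) and $\int_\Sigma |\kappa|\,d\H^2_g < \infty$, so dominated convergence gives $\int_\Sigma \kappa\, d\H^2_g = \lim_{i'} \int_{\Omega_{i'}} \kappa\, d\H^2_g = 0$.

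The main obstacle I anticipate is the geometric bookkeeping in the step that each graphical boundary circle bounds a disk in $\Sigma \cap C_{2\sigma_{i'}}$, and more generally that the domains $\Omega_{i'}$ genuinely exhaust $\Sigma$ — this is where one must combine the mean convexity of the coordinate cylinders $\partial C_\sigma$ far out with the conformal type $\Sigma \cong \mathbb{C}$ to rule out ``extra'' topology or unbounded sheets slipping through $\partial C_{2\sigma_{i'}}$. The convergence of the total geodesic curvatures to $2\pi$ is comparatively soft given Lemma \ref{lem:annular_pieces}, since the defining functions of the graphs have scale-invariant $\C^2$ norms tending to zero, so the intrinsic geometry of each boundary circle converges to that of a round Euclidean circle; one just has to be careful that the \emph{parametrized} circles, not merely their supports, converge, which again follows from the graphical description. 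The remaining ingredients — the Gauss equation, finiteness of $\int_\Sigma|\kappa|$, and $\int_\Sigma(|h|^2 + \Ric(\nu,\nu))\,d\H^2_g \leq 0$ — are already in hand from Corollary \ref{cor:cohn-vossen} and Proposition \ref{prop:main}.
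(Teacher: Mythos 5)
Your argument is correct and follows essentially the same route as the paper's proof: reduce to showing $\int_\Sigma \kappa \, d\H^2_g = 0$ via Gauss--Bonnet on $\Sigma \cap C_{2\sigma_{i'}}$, using the graphical boundary circles from Lemma \ref{lem:annular_pieces} and scale invariance of the total geodesic curvature, then conclude with the Gauss equation and Proposition \ref{prop:main}. The only cosmetic difference is that you phrase the Gauss--Bonnet step in terms of the union $\Omega_{i'}$ of the $m$ disks rather than $\Sigma \cap C_{2\sigma_{i'}}$, which is the same set.
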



\section{Proof of Theorems \ref{thm:volume_stable_center} and \ref{thm:volume_stable_center_positive}} \label{sec:main_theorem_proof}

\begin{proof} [Proof of Theorem \ref{thm:volume_stable_center}] We proceed by contradiction: Assume that there exists a
  sequence $\{\Sigma_i\}$ of connected closed volume
  preserving stable constant mean curvature surfaces such that
  $\H^2_g(\Sigma_i \cap B_\sigma) \leq \Theta \sigma^2$ for all
  $\sigma \geq 1$ and all $i = 1, 2, \ldots$, such that $\Sigma_i \cap
  K \neq \emptyset$, and such that $\H^2_g (\Sigma_i) \geq i$. These
  assumptions imply that $\max_{x\in\Sigma} |x|$ tends to infinity. From Lemma \ref{lem:Christodoulou-Yau} we know that $\H^2_g(\Sigma_i) \mc_{\Sigma_i}^2 \leq \int_{\Sigma_i} (\frac{2}{3} \Scal_g + \mc_{\Sigma_i}^2) d \H^2_g \leq \frac{64\pi}{3}$ so that $\mc_{\Sigma_i} \to 0$ as $i \to \infty$. Using the curvature estimates from Theorem \ref{prop:curvature_estimates_decay} we can pass $\Sigma_i$ to a subsequential limit $\Sigma_\infty \subset M$ (where the convergence is as in the proof of Theorem \ref{prop:curvature_estimates_decay}). The components of $\Sigma_\infty$ are unbounded. Let $\hat \Sigma_\infty$ be a component of $\Sigma_\infty$ such that $\Sigma_\infty \cap K \neq \emptyset$. Note that $\hat \Sigma_\infty$ might be assumed with multiplicity $>1$. It is easy to see that $\hat \Sigma_\infty$ is a complete non-compact embedded orientable two-sided volume preserving stable minimal surface with quadratic area growth. By Proposition \ref{prop:mainPMT}, $\int_{\hat \Sigma_\infty} (\Scal_g + |h|^2) d \H^2_g = 0$. This contradicts our assumption that the scalar curvature $\Scal_g$ is strictly positive in a neighborhood of $K$.  
\end{proof}

\begin{proof}  [Proof of Theorem \ref{thm:volume_stable_center_positive}]
Use Corollary \ref{cor:quadratic_area_growth} to show that Theorem \ref{thm:volume_stable_center} applies. 
\end{proof}


\section {On the number of components of volume preserving stable CMC surfaces} \label{sec:number_of_components}

In this section we collect results on the number of components of
isoperimetric surfaces and ``large" volume preserving stable constant mean curvature surfaces
in initial data sets.

\begin{proposition} \label{prop: mc_bdd}
Let $(M, g)$ be a Riemannian $3$-manifold with non-negative scalar curvature and let $\Sigma \subset M$ be a closed volume preserving stable constant mean curvature surface -- not necessarily connected -- with
constant mean curvature $\mc_{\Sigma}$. Then $$ \mc^2_\Sigma \leq \max \left( -
2 \inf_{x \in \Sigma} \Ric(\nu, \nu), \frac{64 \pi}{3}
\H^2_g(\Sigma)^{-1} \right).$$
\begin{proof}
By a standard stability
argument (cf. \cite[p. 294]{Ritore-Ros:1992}, \cite[p. 73]{Bray:1998}), if $0 <
| h|^2 + \Ric(\nu, \nu)$ along $\Sigma$, then $\Sigma$ is connected. By Lemma \ref{lem:Christodoulou-Yau} we then
have that $ \H_g^2(\Sigma) \mc_\Sigma^2  \leq \frac{64
\pi}{3}$. Note that $\mc_\Sigma^2 \leq 2 (|h|^2 + \Ric(\nu, \nu)) - 2 \Ric (\nu, \nu)$. 
\end{proof}
\end{proposition}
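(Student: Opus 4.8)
The plan is to reduce the general (possibly disconnected) case to the connected case, where Lemma \ref{lem:Christodoulou-Yau} gives a clean bound, by invoking a well-known stability argument: if the operator $|h|^2 + \Ric(\nu,\nu)$ is everywhere positive along a closed volume preserving stable constant mean curvature surface, then the surface must be connected. So first I would dispose of the case in which $|h|^2 + \Ric(\nu,\nu)$ fails to be positive somewhere on $\Sigma$: then at such a point $\Ric(\nu,\nu) \leq -|h|^2 \leq -\tfrac{1}{2}\mc_\Sigma^2$ by Cauchy--Schwarz (since $|h|^2 \geq \tfrac{1}{2}\mc_\Sigma^2$ for a surface), which immediately gives $\mc_\Sigma^2 \leq -2\inf_{x\in\Sigma}\Ric(\nu,\nu)$, the first alternative in the max.

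In the remaining case $|h|^2 + \Ric(\nu,\nu) > 0$ everywhere on $\Sigma$. Here I would recall the standard argument (references \cite{Ritore-Ros:1992}, \cite{Bray:1998}): if $\Sigma$ had two or more components, one could take $u$ to be a locally constant function that is a nonzero constant on some components and chosen so that $\int_\Sigma u\, d\H^2_g = 0$; then $\bar\nabla u \equiv 0$, so the volume preserving stability inequality forces $\int_\Sigma (|h|^2 + \Ric(\nu,\nu)) u^2\, d\H^2_g \leq 0$, contradicting positivity of the integrand and $u \not\equiv 0$. Hence $\Sigma$ is connected. Now apply Lemma \ref{lem:Christodoulou-Yau} directly: $\int_\Sigma(\tfrac{2}{3}\Scal_g + \tfrac{2}{3}|\ring h|^2 + \mc_\Sigma^2)\, d\H^2_g \leq \tfrac{64\pi}{3}$. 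Since $\Scal_g \geq 0$ and $|\ring h|^2 \geq 0$, this gives $\mc_\Sigma^2\, \H^2_g(\Sigma) \leq \tfrac{64\pi}{3}$, i.e. $\mc_\Sigma^2 \leq \tfrac{64\pi}{3}\H^2_g(\Sigma)^{-1}$, which is the second alternative in the max.

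Finally, to present both cases uniformly one writes the pointwise inequality $\mc_\Sigma^2 \leq 2(|h|^2 + \Ric(\nu,\nu)) - 2\Ric(\nu,\nu)$, valid at every point of $\Sigma$: if the first term on the right is $\leq 0$ somewhere we land in the first alternative, and otherwise $\Sigma$ is connected and Lemma \ref{lem:Christodoulou-Yau} applies. I expect no genuine obstacle here; the only points requiring a little care are the Cauchy--Schwarz estimate $|h|^2 \geq \tfrac{1}{2}\mc_\Sigma^2$ relating the full second fundamental form to the mean curvature on a surface, and making sure the locally-constant test function used to force connectedness genuinely has mean zero, which is arranged by scaling its values on the different components appropriately.
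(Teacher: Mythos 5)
Your proposal is correct and follows essentially the same route as the paper: the case split on whether $|h|^2 + \Ric(\nu,\nu)$ is everywhere positive, the locally constant test function forcing connectedness, Lemma \ref{lem:Christodoulou-Yau} for the connected case, and the pointwise bound $\mc_\Sigma^2 \leq 2|h|^2$ for the other case. You have merely spelled out the details that the paper leaves implicit.
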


It is interesting to compare the estimate in Proposition \ref{prop: mc_bdd} with the main result in \cite{metzger:2007-hmax}, where an effective bound on $\mc_\Sigma$ is derived for constant mean curvature surfaces $\Sigma \subset M$  that enclose a component of the horizon. 

\begin{proposition} \label{prop: Willmore energy bounded below}
Let $\Sigma$ be a closed surface in an initial data set $(M, g)$. Then $\int_{\Sigma} |h|^2 d \H^2_g \geq 8 \pi - o(1)$ as $r_{\min} := \inf \{ r(x) : x \in \Sigma \} \to \infty$.  
\begin{proof} We may assume that $\int_\Sigma \mc^2 d \H^2_g \leq 16 \pi$ (otherwise we are done). We will compare geometric quantities of $\Sigma$ in $(M, g)$ with those computed with respect to the Euclidean metric in the asymptotically flat coordinate chart. We will denote the former with a subscript $g$ and the latter with a subscript $\delta$, for emphasis. First,  $\int_{\Sigma} |h_\delta|_\delta^2 d \H^2_\delta \geq \frac{1}{2} \int_{\Sigma} \mc_\delta^2 d \H^2_\delta \geq 8 \pi$ (see \cite[(16.32)]{Gilbarg-Trudinger:1998} for a derivation of the second inequality). By Lemma  \ref{lem:comparison_second}, $|h_g - h_\delta|_\delta \leq C' \left(  \frac{|h_\delta|_\delta}{r} + \frac{1}{r^2} \right)$ for large $r$. Using the estimates in Appendix \ref{sec:integral_decay_estimates}, we compute that
  \begin{equation*}
    \begin{split}
      &(1 + o(1)) \int_\Sigma |h_g|^2_g d \H^2 _g
      \\
      &\geq \int_{\Sigma}
      |h_g|^2_\delta d \H^2_\delta 
      \\
      &= \int_{\Sigma} |h_\delta|^2_\delta d \H^2_\delta +
      \int_{\Sigma} (|h_g|_\delta - |h_\delta|_\delta)( |h_g|_\delta +
      |h_\delta|_\delta) d\H^2_\delta 
      \\
      &\geq \int_{\Sigma} |h_\delta|^2_\delta d \H^2_\delta -  2 C'^2
      \int_\Sigma \left( \frac{|h_\delta|_\delta}{r} + \frac{1}{r^2}
      \right) \left( |h_\delta|_\delta + \frac{1}{r^2} \right) d
     \H^2_\delta 
     \\
     &\geq (1 - o(1))\int _{\Sigma} |h _\delta|^2_\delta d \H^2_\delta - o(1) \text{ as } r_{min} \to \infty. 
    \end{split}
  \end{equation*}
\end{proof} 
\end{proposition}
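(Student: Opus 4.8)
The plan is to reduce the statement to the classical Willmore inequality $\int_\Sigma \mc_\delta^2 \, d\H^2_\delta \geq 16\pi$ for closed surfaces in Euclidean space and then to absorb the difference between the Euclidean and the ambient geometry of $\Sigma$ into an error that vanishes as $r_{\min} \to \infty$, using the comparison estimate of Lemma~\ref{lem:comparison_second} together with the integral decay estimates of Appendix~\ref{sec:integral_decay_estimates}.

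First I would make two harmless reductions. Since $|h|^2 = \kappa_1^2 + \kappa_2^2 \geq \tfrac12(\kappa_1 + \kappa_2)^2 = \tfrac12 \mc^2$ pointwise, if $\int_\Sigma \mc^2 \, d\H^2_g > 16\pi$ then $\int_\Sigma |h|^2 \, d\H^2_g > 8\pi$ and we are done; so assume $\int_\Sigma \mc^2 \, d\H^2_g \leq 16\pi$. Likewise we may assume $\int_\Sigma |h|^2 \, d\H^2_g \leq 16\pi$, since otherwise there is nothing to prove. As $r_{\min}$ is large, $\Sigma$ lies in the asymptotically flat chart and may be viewed as a closed surface in $\R^3$ that also carries the Euclidean metric $\delta$; the asymptotics of $g$ together with the second reduction make all the relevant bending energies of $\Sigma$ measured with $\delta$ bounded as well. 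The Willmore inequality then gives $\int_\Sigma |h_\delta|^2_\delta \, d\H^2_\delta \geq \tfrac12 \int_\Sigma \mc_\delta^2 \, d\H^2_\delta \geq 8\pi$, so it suffices to prove $\int_\Sigma |h_g|^2_g \, d\H^2_g \geq (1 - o(1)) \int_\Sigma |h_\delta|^2_\delta \, d\H^2_\delta - o(1)$ as $r_{\min} \to \infty$.

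Next I would run the metric comparison. The asymptotics of $g$ yield $(1 + o(1)) \int_\Sigma |h_g|^2_g \, d\H^2_g \geq \int_\Sigma |h_g|^2_\delta \, d\H^2_\delta$, and expanding
\[
\int_\Sigma |h_g|^2_\delta \, d\H^2_\delta = \int_\Sigma |h_\delta|^2_\delta \, d\H^2_\delta + \int_\Sigma \big(|h_g|_\delta - |h_\delta|_\delta\big)\big(|h_g|_\delta + |h_\delta|_\delta\big) \, d\H^2_\delta
\]
and bounding the cross term with $\big||h_g|_\delta - |h_\delta|_\delta\big| \leq |h_g - h_\delta|_\delta \leq C'(r^{-1}|h_\delta|_\delta + r^{-2})$ from Lemma~\ref{lem:comparison_second} (and $|h_g|_\delta + |h_\delta|_\delta \leq C''(|h_\delta|_\delta + r^{-2})$ for $r$ large), one sees that the cross term is at least $-C\int_\Sigma (r^{-1}|h_\delta|_\delta + r^{-2})(|h_\delta|_\delta + r^{-2}) \, d\H^2_\delta$. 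After Young's inequality every summand here either carries an honest factor $r_{\min}^{-1}$ in front of $\int_\Sigma |h_\delta|^2_\delta \, d\H^2_\delta$ or is a purely weighted integral $\int_\Sigma r^{-a} \, d\H^2_\delta$ with $a > 2$; the former is absorbed as the multiplicative $(1 - o(1))$ factor, and the latter tends to zero by the integral decay estimates of Appendix~\ref{sec:integral_decay_estimates} (which, via the quadratic area growth of surfaces of bounded bending energy from Lemma~\ref{lem:WillmoreAreaBound}, give $\int_\Sigma r^{-a} \, d\H^2_\delta \to 0$ as $r_{\min} \to \infty$ for $a > 2$). Assembling these inequalities and letting $r_{\min} \to \infty$ produces $\int_\Sigma |h_g|^2_g \, d\H^2_g \geq 8\pi - o(1)$.

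The main obstacle is precisely this cross-term bookkeeping: one must verify that every error contribution $\int_\Sigma r^{-a}|h_\delta|^b_\delta \, d\H^2_\delta$ coming from Lemma~\ref{lem:comparison_second} is genuinely negligible. This requires both the preliminary reduction to bounded bending energy -- so that $\int_\Sigma |h_\delta|^2_\delta \, d\H^2_\delta$ is under control when it is hit by the $r_{\min}^{-1}$ factor, and in particular gets absorbed multiplicatively rather than additively -- and the decay estimates of Appendix~\ref{sec:integral_decay_estimates} for the purely weighted integrals. Everything else is a routine comparison of the norms and volume elements of $g$ and $\delta$ along $\Sigma$.
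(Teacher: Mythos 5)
Your argument is correct and follows essentially the same route as the paper's proof: reduce to bounded bending energy, invoke the Euclidean Willmore inequality, and control the cross term $\int_\Sigma (|h_g|_\delta - |h_\delta|_\delta)(|h_g|_\delta + |h_\delta|_\delta)\, d\H^2_\delta$ via Lemma \ref{lem:comparison_second} together with the weighted integral decay estimates of Appendix \ref{sec:integral_decay_estimates}. Your additional reduction to $\int_\Sigma |h|^2\, d\H^2_g \leq 16\pi$ is harmless but not actually needed, since the $r_{\min}^{-1}\int_\Sigma |h_\delta|^2_\delta\, d\H^2_\delta$ error term is absorbed multiplicatively into the $(1-o(1))$ factor whether or not that integral is a priori bounded.
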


\begin{proposition} \label{prop:at_most_one_component_outside} Let $(M, g)$ be an initial data set. There is a constant $r_0 \geq 1$ so that every closed volume preserving stable constant mean curvature surface $\Sigma \subset M$ contains at most one component $\Sigma'$ with $\Sigma' \cap B_{r_0} = \emptyset$.

\begin{proof} Assume that there are two components $\Sigma'$ and $\Sigma''$ of $\Sigma$ that are both disjoint from $B_{r_0}$. Assume that $\H^2_g(\Sigma'') \geq \H^2_g(\Sigma')$. As in the proof of \cite[Proposition 5.3]{Huisken-Yau:1996} one can combine Lemma \ref{lem:Christodoulou-Yau} and Lemma \ref{lem:surface_integral_decay} (using that $|\Ric|(x) = O(r^{-3})$) to conclude that $\int_{\Sigma' \cup \Sigma''} \mc_\Sigma^2 d \H^2_g$ is bounded.  The function that equals $\H^2_g(\Sigma'')$ on $\Sigma'$ and $- \H^2_g(\Sigma')$ on $\Sigma''$ generates a volume preserving normal deformation of $\Sigma$. Hence 
\begin{eqnarray*} 
0 &\geq& \int_{\Sigma'} \left (|h|^2 + \Ric(\nu, \nu) \right) d \H^2 _g  + \frac{\H^2_g(\Sigma')^2}{\H^2_g(\Sigma'')^2} \int_{\Sigma''}  \left (|h|^2 + \Ric(\nu, \nu) \right) d \H^2_g \nonumber \\
&\geq& \int_{\Sigma'} |h|^2 d \H^2_g - \int_{\Sigma' \cup \Sigma''} |\Ric| d \H^2_g \nonumber  \\
&\geq & \int_{\Sigma'} |h|^2 d \H^2_g - O(r_0^{-1})
\end{eqnarray*}
as $r_{min} (\Sigma' \cup \Sigma '') \geq r_0 \to \infty$, where we applied Lemma \ref{lem:surface_integral_decay} again. For $r_0$ large enough, this is in contradiction with the result in Proposition \ref{prop: Willmore energy bounded below}. 
\end{proof}
\end{proposition}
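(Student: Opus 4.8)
The plan is to argue by contradiction, exactly along the lines of \cite[Proposition 5.3]{Huisken-Yau:1996}, but feeding in the three ingredients that have already been assembled: the Christodoulou--Yau bound (Lemma \ref{lem:Christodoulou-Yau}), the decay estimates for integrals of polynomially decaying quantities over surfaces of bounded bending energy (the lemmas cited from Appendix \ref{sec:integral_decay_estimates}, in particular Lemma \ref{lem:surface_integral_decay}), and the ``Willmore energy bounded below'' statement of Proposition \ref{prop: Willmore energy bounded below}. So suppose that for every $r_0 \geq 1$ there is a closed volume preserving stable constant mean curvature surface $\Sigma \subset M$ with two distinct components $\Sigma', \Sigma''$ both disjoint from $B_{r_0}$; label them so that $\H^2_g(\Sigma'') \geq \H^2_g(\Sigma')$.

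First I would control the bending energy. Since $\Sigma$ is volume preserving stable and $(M,g)$ has non-negative scalar curvature in the asymptotic region, Lemma \ref{lem:Christodoulou-Yau} gives $\int_\Sigma (\tfrac{2}{3}\Scal_g + \mc_\Sigma^2)\,d\H^2_g \leq \tfrac{64\pi}{3}$; combining this with the decay $|\Scal_g| = O(r^{-3})$ and Lemma \ref{lem:surface_integral_decay} shows that $\int_{\Sigma' \cup \Sigma''} \mc_\Sigma^2\,d\H^2_g$ is bounded independently of $r_0$ (this is where one uses that the components sit far out). In particular the bending energy of $\Sigma'$ alone is bounded, so Proposition \ref{prop: Willmore energy bounded below} applies to $\Sigma'$ and gives $\int_{\Sigma'} |h|^2\,d\H^2_g \geq 8\pi - o(1)$ as $r_0 \to \infty$.

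Next I would exploit the stability inequality with a carefully chosen mean-zero test function. The piecewise constant function equal to $\H^2_g(\Sigma'')$ on $\Sigma'$ and $-\H^2_g(\Sigma')$ on $\Sigma''$ has vanishing integral over $\Sigma$, hence is an admissible test function (after the usual smoothing, or directly since it is locally constant) for volume preserving stability. Its tangential gradient vanishes, so the stability inequality collapses to
\begin{equation*}
0 \geq \H^2_g(\Sigma'')^2 \int_{\Sigma'} (|h|^2 + \Ric(\nu,\nu))\,d\H^2_g + \H^2_g(\Sigma')^2 \int_{\Sigma''} (|h|^2 + \Ric(\nu,\nu))\,d\H^2_g.
\end{equation*}
Dividing by $\H^2_g(\Sigma'')^2$ and using $\H^2_g(\Sigma')^2 / \H^2_g(\Sigma'')^2 \leq 1$ together with $|h|^2 + \Ric(\nu,\nu) \geq -|\Ric|$ on $\Sigma''$, this yields $\int_{\Sigma'} |h|^2\,d\H^2_g \leq \int_{\Sigma' \cup \Sigma''} |\Ric|\,d\H^2_g$. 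Now invoke the decay $|\Ric|(x) = O(r^{-3})$ and Lemma \ref{lem:surface_integral_decay} once more to bound the right-hand side by $O(r_0^{-1})$ as $r_0 \to \infty$. Thus $\int_{\Sigma'} |h|^2\,d\H^2_g = o(1)$, which for $r_0$ large enough contradicts the lower bound $8\pi - o(1)$ from Proposition \ref{prop: Willmore energy bounded below}.

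The main obstacle I anticipate is purely bookkeeping: one must make sure all the $o(1)$ and $O(\cdot)$ bounds are genuinely uniform over the (a priori unknown) surface $\Sigma$, depending only on $(M,g)$ and on $r_0$, and not on the area or diameter of $\Sigma$. This is exactly what the bending-energy bound buys us, since the decay estimates of Appendix \ref{sec:integral_decay_estimates} are stated in terms of the bending energy and the distance $r_{\min}$ to the origin; so the delicate point is to verify at the outset that $\int_{\Sigma' \cup \Sigma''} \mc_\Sigma^2\,d\H^2_g$ is controlled uniformly, which in turn rests on the dimensionless form of Lemma \ref{lem:Christodoulou-Yau}. Everything else is the same two-component stability argument as in \cite{Huisken-Yau:1996}.
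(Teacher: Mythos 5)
Your proposal is correct and follows the paper's own proof essentially verbatim: the same bending-energy bound via Lemma \ref{lem:Christodoulou-Yau} and Lemma \ref{lem:surface_integral_decay}, the same piecewise-constant volume-preserving test function yielding $\int_{\Sigma'}|h|^2\,d\H^2_g \leq \int_{\Sigma'\cup\Sigma''}|\Ric|\,d\H^2_g = O(r_0^{-1})$, and the same contradiction with Proposition \ref{prop: Willmore energy bounded below}. One small slip: the proposition does not assume non-negative scalar curvature, so you cannot invoke it when extracting the bending-energy bound; instead the possibly negative $\Scal_g$-contribution is absorbed using the decay $|\Scal_g| = O(r^{-3})$ together with Lemma \ref{lem:surface_integral_decay}, exactly as you do in the very next clause, so nothing is lost.
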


\begin{proposition} \label{prop:number_components_bounded} Let $\Sigma$ be a closed volume preserving stable constant mean curvature surface in an initial data set $(M, g)$. Then the number of components $n_\Sigma$ of $\Sigma$ is bounded in terms of $\H^2_g(\Sigma \cap B_{2 r_0})$ where $r_0$ is as in Proposition \ref{prop:at_most_one_component_outside}. 
\begin{proof} From the proof of Proposition \ref{prop: mc_bdd} we know that if the mean curvature of $\Sigma$ is larger than a constant depending only on $(M, g)$, then $\Sigma$ is connected. On the other hand, given an upper bound on the mean curvature of $\Sigma$, the monotonicity formula shows that  every component of $\Sigma \cap B_{2 r_0}$ which intersects $B_{r_0}$ makes a definite contribution to $\H^2_g(\Sigma \cap B_{2 r_0})$. Hence the number of such components is bounded in terms of $\H^2_g(\Sigma \cap B_{2 r_0})$. By Proposition \ref{prop:at_most_one_component_outside}, $\Sigma$ has at most one component that is disjoint from $B_{r_0}$.  
\end{proof}
\end{proposition}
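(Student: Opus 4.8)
The plan is to split into two regimes according to the size of the mean curvature $|\mc_\Sigma|$, handling the high–curvature regime by a connectedness argument and the low–curvature regime by a local area lower bound from the monotonicity formula.

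First I would record that $(M,g)$, being an initial data set, is homogeneously regular: there is an absolute bound $\Lambda$ on $|\Ric|$ and a positive lower bound on the injectivity radius. As in the proof of Proposition \ref{prop: mc_bdd} one has pointwise $\mc_\Sigma^2 \leq 2(|h|^2 + \Ric(\nu,\nu)) - 2\Ric(\nu,\nu)$, hence $|h|^2 + \Ric(\nu,\nu) \geq \tfrac12 \mc_\Sigma^2 - \Lambda$ along $\Sigma$. Thus if $\mc_\Sigma^2 > 2\Lambda$ then $|h|^2 + \Ric(\nu,\nu) > 0$ everywhere on $\Sigma$, and the standard stability argument quoted in the proof of Proposition \ref{prop: mc_bdd} forces $\Sigma$ to be connected, so $n_\Sigma = 1$ and there is nothing to prove. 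From now on I may assume $|\mc_\Sigma| \leq \mc_\ast := \sqrt{2\Lambda}$, a threshold depending only on $(M,g)$.

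Next, for surfaces of mean curvature bounded by $\mc_\ast$ I would invoke the monotonicity formula in the homogeneously regular manifold $(M,g)$: there exist $\theta_0 > 0$ and $\rho_0 \in (0,1]$, depending only on $(M,g)$ (through $\Lambda$, the injectivity radius bound, and $\mc_\ast$), such that for every such $\Sigma$, every $p \in \Sigma$, and every geodesic ball $B_g(p,\rho) \subset M$ with $\rho \leq \rho_0$ one has $\H^2_g(\Sigma \cap B_g(p,\rho)) \geq \theta_0 \rho^2$. Shrinking $\rho_0$ if necessary so that $B_g(p,\rho_0) \subset B_{2r_0}$ for every $p \in B_{r_0}$ — possible since $\bar B_{2r_0}$ is compact — consider any component $\Sigma'$ of $\Sigma$ meeting $B_{r_0}$ and fix $p \in \Sigma' \cap B_{r_0}$; then $\H^2_g(\Sigma' \cap B_{2r_0}) \geq \H^2_g(\Sigma' \cap B_g(p,\rho_0)) \geq \theta_0 \rho_0^2$. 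Since distinct components are disjoint, summing over all components of $\Sigma$ that meet $B_{r_0}$ shows their number is at most $\theta_0^{-1}\rho_0^{-2}\,\H^2_g(\Sigma \cap B_{2r_0})$.

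Finally, by Proposition \ref{prop:at_most_one_component_outside}, $\Sigma$ has at most one component disjoint from $B_{r_0}$, so in the low–curvature case $n_\Sigma \leq 1 + \theta_0^{-1}\rho_0^{-2}\,\H^2_g(\Sigma \cap B_{2r_0})$, while $n_\Sigma = 1$ in the high–curvature case; together this is the claimed bound. I expect the only point needing care to be the uniformity in the monotonicity formula: one must check that $\theta_0$ and $\rho_0$ are genuinely independent of $\Sigma$ and of how far $\Sigma$ extends, which is exactly where homogeneous regularity and the uniform bound $|\mc_\Sigma| \leq \mc_\ast$ enter. This is classical (cf.\ \cite{GMT}), and the containment $B_g(p,\rho_0)\subset B_{2r_0}$ for $p\in B_{r_0}$ is immediate from compactness of $\bar B_{2r_0}$, so the argument presents no genuine obstacle.
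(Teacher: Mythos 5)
Your proposal is correct and follows essentially the same route as the paper: rule out disconnectedness when $|\mc_\Sigma|$ exceeds a threshold via the stability argument from Proposition \ref{prop: mc_bdd}, use the monotonicity formula (uniform for bounded mean curvature) to give each component meeting $B_{r_0}$ a definite share of $\H^2_g(\Sigma \cap B_{2r_0})$, and invoke Proposition \ref{prop:at_most_one_component_outside} for the at most one remaining component. Your write-up merely makes explicit the threshold $\sqrt{2\Lambda}$ and the uniformity of the density lower bound, which the paper leaves implicit.
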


We record the following well-known isoperimetric inequality which follows in a standard way from the Sobolev inequality, cf. \cite[Lemma 2.4]{Eichmair-Metzger:2010}.  

\begin{lemma} \label{lem: crude isoperimetric inequality}
Let $(\hat M, \hat g)$ be a complete Riemannian manifold diffeomorphic to $\R^3$ that contains the initial data set $(M, g)$ isometrically. There exists a constant $\gamma >0$ depending only on $(\hat M, \hat g)$ such that
$\CL_{\hat g}^3(U)^{\frac{2}{3}} \leq \gamma \H_{\hat g}^2 (\partial^* U)$ holds for every bounded Borel set $U \subset \hat M$ with finite perimeter.
\end{lemma}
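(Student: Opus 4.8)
The plan is to deduce this from the Euclidean isoperimetric inequality in $(\R^3, \delta)$ together with the fact that, on the asymptotically flat chart, the metric $\hat g$ is uniformly equivalent to $\delta$, and then to absorb the compact ``core'' of $\hat M$ by a covering/patching argument. First I would recall the sharp Sobolev inequality on $\R^3 \cong \hat M$: since $(\hat M, \hat g)$ is complete and diffeomorphic to $\R^3$ and agrees with an asymptotically flat metric outside a compact set, a standard argument (e.g.\ the one in \cite[Lemma 2.4]{Eichmair-Metzger:2010}) yields a constant $c_S = c_S(\hat M, \hat g)$ with $\bigl(\int_{\hat M} |f|^{3/2}\, d\H^3_{\hat g}\bigr)^{2/3} \leq c_S \int_{\hat M} |\bar\nabla f|\, d\H^3_{\hat g}$ for all $f \in \C^1_c(\hat M)$; this uses only that the metric is a bounded perturbation of $\delta$ at infinity plus compactness of the remaining region, so that the Euclidean Gagliardo--Nirenberg--Sobolev inequality transfers with a controlled constant.

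Next I would run the classical truncation argument turning the Sobolev inequality into the isoperimetric inequality. Given a bounded Borel set $U \subset \hat M$ of finite perimeter, approximate $\chi_U$ by smooth functions and apply the coarea formula: for $f \in \C^1_c(\hat M)$ with $0 \le f \le 1$ one has $\int_{\hat M} |\bar\nabla f|\, d\H^3_{\hat g} = \int_0^1 \H^2_{\hat g}(\{f = t\})\, dt$, while $\int_{\hat M} |f|^{3/2}\, d\H^3_{\hat g} \ge \int_{\hat M} \chi_{\{f=1\}}\, d\H^3_{\hat g}$ is comparable to $\CL^3_{\hat g}$ of the superlevel set. Taking $f$ to converge to $\chi_U$ in $BV$ — equivalently replacing $U$ by its superlevel sets and using lower semicontinuity of perimeter — gives $\CL^3_{\hat g}(U)^{2/3} \le c_S\, \H^2_{\hat g}(\partial^* U)$, which is exactly the claimed inequality with $\gamma = c_S$. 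All of this is textbook (cf.\ \cite[Section 17]{GMT}) once the Sobolev inequality is in hand.

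The main obstacle is establishing the global Sobolev inequality on $(\hat M, \hat g)$ with a constant depending only on $(\hat M, \hat g)$: away from a large coordinate ball the metric is $\C^0$-close to $\delta$ so the Euclidean inequality applies with constant close to the sharp one, but one must control functions supported in or straddling the compact core. The cleanest route is a partition-of-unity argument: cover $\hat M$ by the asymptotic region $\hat M \setminus B_{R_0}$ (for $R_0$ large) and one relatively compact coordinate ball $B_{2R_0}$; on the former use the (almost-Euclidean) Sobolev inequality, on the latter use any Sobolev inequality valid on the smooth compact piece, and combine the two estimates using that $\hat g$ and $\delta$ are uniformly equivalent with derivative bounds, absorbing the cutoff gradient terms by Young's inequality. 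Alternatively, and perhaps more transparently, one invokes the fact that a complete $3$-manifold which is Euclidean at infinity satisfies a Euclidean-type Sobolev inequality — this is where the hypothesis that $\hat M$ is diffeomorphic to $\R^3$ and contains the asymptotically flat $(M,g)$ is used. Since this lemma is explicitly quoted as ``well-known'' and attributed to \cite[Lemma 2.4]{Eichmair-Metzger:2010}, in the paper itself it suffices to cite that reference and sketch the truncation step; I would present the proof at the level of detail above and refer the reader there for the Sobolev inequality.
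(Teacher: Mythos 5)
Your proposal matches the paper's treatment: the lemma is recorded there without proof, with precisely the citation to \cite[Lemma 2.4]{Eichmair-Metzger:2010} for the Sobolev inequality and the standard truncation/$BV$ step that you describe. One caution on your ``cleanest route'': the partition-of-unity derivation of the homogeneous $L^1$-Sobolev inequality does not close as stated, since the cutoff error term $\int |f|\,|\bar\nabla \varphi|\, d\H^3_{\hat g}$ cannot be absorbed into $\int |\bar\nabla f|\, d\H^3_{\hat g}$ by Young's inequality for arbitrary compactly supported $f$; the clean self-contained argument is that $\hat g$ is \emph{globally} uniformly equivalent to $\delta$ in a chart $\hat M \cong \R^3$ extending the asymptotically flat coordinates (by the decay assumptions on the end and by compactness of the core), so volumes and perimeters are comparable and the Euclidean isoperimetric inequality transfers directly.
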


\begin{corollary} \label{cor:number_of_components_bounded} Let $\Omega$ be an isoperimetric region (i.e. a minimizer in \eqref{eqn:isoperimetric_profile}) in an initial data set $(M, g)$. Then the number of components $n_\Sigma$ of $\Sigma := \partial \Omega$ is bounded by a constant depending only on $(M, g)$. If $(M, g)$ has non-negative scalar curvature, then $\mc_{\Sigma}^2 \H^2_g(\Sigma) \leq  \frac{64 \pi}{3} n_\Sigma$. In particular, the mean curvature of the boundaries of isoperimetric regions tends to zero with their volume.  
\begin{proof}
If $r_0$ is large, depending only on $(M, g)$, and if $\CL^3_g(\Omega \cap B_{2 r_0})$ is small compared to $\CL^3_g (B_{2 r_0})$, then consider the region obtained from replacing the part of $\Omega$ that lies in $B_{2 r_0}$ by a coordinate ball of $g$-volume $\CL^3_g (\Omega \cap B_{2 r_0})$ near the boundary of $B_{2 r_0}$, and use this region as a competitor for least area under the volume constraint to obtain an explicit estimate for $\H^2_g (\partial \Omega \cap B_{2r_0})$. If $\CL^3_g(\Omega \cap B_{2 r_0})$ is not small when compared to $\CL^3_g (B_{2 r_0})$, we replace $\Omega \cap B_{2r_0}$ by a centered coordinate ball $B_{r'}$ of volume $\CL^3_g(\Omega \cap B_{2r_0})$. It follows that $\H^2_g(\partial \Omega \cap B_{2r_0})$ is bounded explicitly in terms of $r_0$. Taking $r_0$ even larger if necessary we may apply Proposition \ref{prop:number_components_bounded} to obtain a bound $n_\Sigma$ on the number of components of $\Sigma = \partial \Omega$. That $\mc_{\Sigma}^2 \H^2_g(\Sigma) \leq  \frac{64 \pi}{3} n_\Sigma$ then follows from Lemma \ref{lem:Christodoulou-Yau}. The isoperimetric inequality in Lemma \ref{lem: crude isoperimetric inequality} shows that $\H^2_g(\partial \Omega) \to \infty$ as $\CL^3_g(\Omega) \to \infty$, proving the last claim. 
\end{proof}
\end{corollary}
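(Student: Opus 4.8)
The plan is to bound, by a constant depending only on $(M,g)$, the area of the part of $\Sigma := \partial\Omega$ lying in a fixed coordinate ball; then to read off a bound on the number of components from Proposition \ref{prop:number_components_bounded}; and finally to sum the Christodoulou--Yau estimate of Lemma \ref{lem:Christodoulou-Yau} over the components, from which the last assertion will follow via Lemma \ref{lem: crude isoperimetric inequality}. For the area bound I would fix the radius $r_0 \geq 1$ supplied by Proposition \ref{prop:at_most_one_component_outside} (enlarging it finitely often later as needed), and, after replacing $2 r_0$ by a slightly larger radius so that $\Sigma$ meets $\partial B_{2 r_0}$ transversely, compare $\Omega$ with a volume-preserving competitor $\Omega'$ built by surgery inside $B_{2 r_0}$: if $\CL^3_g(\Omega \cap B_{2 r_0})$ is small relative to $\CL^3_g(B_{2 r_0})$, replace the piece $\Omega \cap B_{2 r_0}$ by a coordinate ball of the same $g$-volume glued in near $\partial B_{2 r_0}$; otherwise replace $\Omega \cap B_{2 r_0}$ by a centered coordinate ball $B_{r'}$ of the same $g$-volume. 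In either case $\CL^3_g(\Omega') = \CL^3_g(\Omega)$, so minimality of $\Omega$ in \eqref{eqn:isoperimetric_profile} gives $\H^2_g(\partial^*\Omega) \leq \H^2_g(\partial^*\Omega')$; and since $\partial^*\Omega$ and $\partial^*\Omega'$ coincide outside a bounded region determined by $r_0$, while inside that region $\partial^*\Omega'$ is contained in the union of $\partial B_{2 r_0}$ and the boundary of a coordinate ball of $g$-volume at most $\CL^3_g(B_{2 r_0})$, I obtain $\H^2_g(\Sigma \cap B_{2 r_0}) \leq \Lambda$ for a constant $\Lambda = \Lambda(M, g)$.

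With the area bound in hand, $\Sigma = \partial\Omega$ is a smooth closed volume preserving stable constant mean curvature surface, so --- enlarging $r_0$ once more if necessary --- Proposition \ref{prop:number_components_bounded} bounds the number of components $n_\Sigma$ in terms of $\H^2_g(\Sigma \cap B_{2 r_0}) \leq \Lambda$, hence by a constant depending only on $(M, g)$; this is the first assertion. For the mean curvature estimate I would observe that each component $\Sigma_j$ of $\Sigma$ is itself a connected closed volume preserving stable constant mean curvature surface with the same constant mean curvature $\mc_\Sigma$: any $u \in \C^1_c(\Sigma_j)$ with $\int_{\Sigma_j} u \, d\H^2_g = 0$, extended by zero, is a test function admissible for the volume preserving stability of $\Sigma$. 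Lemma \ref{lem:Christodoulou-Yau} therefore gives $\int_{\Sigma_j} ( \tfrac{2}{3} \Scal_g + \mc_\Sigma^2 ) \, d\H^2_g \leq \tfrac{64\pi}{3}$ for each $j$; summing over the $n_\Sigma$ components and then using $\Scal_g \geq 0$ yields $\mc_\Sigma^2 \, \H^2_g(\Sigma) \leq \int_\Sigma \mc_\Sigma^2 \, d\H^2_g \leq \tfrac{64\pi}{3} \, n_\Sigma$.

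Finally, Lemma \ref{lem: crude isoperimetric inequality} gives $\H^2_g(\Sigma) \geq \gamma^{-1} \CL^3_g(\Omega)^{2/3}$, which tends to infinity with $\CL^3_g(\Omega)$; since $n_\Sigma$ is bounded independently of $\Omega$, the inequality $\mc_\Sigma^2 \leq \tfrac{64\pi}{3} \, n_\Sigma \, \H^2_g(\Sigma)^{-1}$ established in the previous paragraph forces $\mc_\Sigma \to 0$ as the volume grows.

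I expect the one genuinely delicate point to be the competitor construction: one has to arrange the pasted coordinate ball so that it creates at most one additional interface, of area controlled by $r_0$ --- in particular one wants it disjoint from $\Omega \setminus B_{2 r_0}$, which may force a careful choice of its location near $\partial B_{2 r_0}$ --- and one has to check that the portion of $\partial^*\Omega'$ lying on $\partial B_{2 r_0}$ contributes at most $\H^2_g(\partial B_{2 r_0})$. This is, however, a routine cut-and-paste of the kind used in density estimates for isoperimetric regions, compare the proof of Lemma \ref{lem: crude isoperimetric inequality} and \cite[Lemma 2.4]{Eichmair-Metzger:2010}. The remaining steps are bookkeeping with constants, the only care needed being to fix $r_0$ in accordance with Propositions \ref{prop:at_most_one_component_outside} and \ref{prop:number_components_bounded} before running the area estimate.
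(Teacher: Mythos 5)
Your argument is correct and follows essentially the same route as the paper's proof: the two-case cut-and-paste competitor to bound $\H^2_g(\Sigma \cap B_{2r_0})$, then Proposition \ref{prop:number_components_bounded} for the component count, the component-wise Christodoulou--Yau estimate for $\mc_\Sigma^2 \H^2_g(\Sigma) \leq \tfrac{64\pi}{3} n_\Sigma$, and Lemma \ref{lem: crude isoperimetric inequality} for the final claim. Your added observations (transverse choice of the cutting radius, the fact that each component is itself volume preserving stable via extension by zero) make explicit points the paper leaves implicit.
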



\appendix 

\section{Integral decay estimates} \label{sec:integral_decay_estimates}

In this appendix we collect estimates for surface integrals of decaying quantities. Our computations take place on the part of an initial data set $(M, g)$ that is diffeomorphic to $\{|x| \geq 1\} \subset \R^3$ where \begin{eqnarray} \label{eqn: decay assumptions integral decay} r|g_{ij } - \delta_{ij}| + r^2|\partial _k g_{ij}|  \leq C \text{ for all } r:= |x| \geq 1. \end{eqnarray} 

\begin{lemma}  
 [\protect{\cite[Lemma 5.2]{Huisken-Yau:1996}}]\label{lem:surface_integral_decay}
  Let $(M,g)$ be an initial data set for which the decay assumptions \eqref{eqn: decay assumptions integral decay} hold. For every exponent $p>2$ there exist constants $C_1$ and $r_0 \geq 1$ such that for every closed surface $\Sigma \subset M$ with $\Sigma \cap B_{\rho} = \emptyset$ for some  $\rho \geq r_0$ one has the estimate
  \begin{equation*}
    \int_\Sigma r^{-p} d \H^2_g \leq C_1 \rho ^{2-p} \int_\Sigma \mc^2 d \H^2_g.
  \end{equation*}
\end{lemma}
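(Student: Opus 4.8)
The plan is to derive the bound from Simon's monotonicity formula together with a dyadic decomposition of the integral. The key observation is that for a \emph{closed} surface the area ratio $\sigma^{-2}\,\H^2(\Sigma\cap B_\sigma)$ in large balls tends to zero, so Simon's formula will control $\H^2(\Sigma\cap B_\sigma)$ by $\sigma^2$ times the Willmore energy of $\Sigma$; splitting $\int_\Sigma r^{-p}$ over dyadic annuli then produces the factor $\rho^{2-p}$ through a geometric series that converges precisely because $p>2$.

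First I would pass to the Euclidean structure of the asymptotically flat chart. Since $\Sigma\cap B_\rho=\emptyset$ with $\rho\ge r_0$, all of $\Sigma$ lies where, by \eqref{eqn: decay assumptions integral decay}, one has $|g-\delta|\le Cr^{-1}$ and $|\partial g|\le Cr^{-2}$; hence, using the comparison estimates of this appendix (cf.\ Lemma~\ref{lem:comparison_second}), the Euclidean and curved area elements of $\Sigma$ agree up to a factor $1+O(r^{-1})$, the functions $r^{-p}$ agree up to $O(r^{-1})$, and the curved Willmore integrand agrees with $\mc_\delta^2$ up to lower-order terms. It therefore suffices to prove $\int_\Sigma r^{-p}\,d\H^2_\delta\le C_1'\,\rho^{2-p}\int_\Sigma \mc_\delta^2\,d\H^2_\delta$ with $C_1'=C_1'(p)$, the comparison errors being absorbed once $r_0$ is taken large.

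Next I would apply Simon's monotonicity formula \cite{Simon:1993} to the closed surface $\Sigma\subset\R^3$, centered at the origin, in the form
\[
  \frac{\H^2_\delta(\Sigma\cap B_\sigma)}{\sigma^2}\;\le\; C_0\Big(\frac{\H^2_\delta(\Sigma\cap B_\tau)}{\tau^2}+\int_{\Sigma\cap B_\tau}\mc_\delta^2\,d\H^2_\delta\Big),\qquad 0<\sigma\le\tau,
\]
with $C_0$ universal. Letting $\tau\to\infty$, and using that $\Sigma$ is compact so that $\tau^{-2}\,\H^2_\delta(\Sigma\cap B_\tau)\to0$, yields $\H^2_\delta(\Sigma\cap B_\sigma)\le C_0\,\sigma^2\int_\Sigma\mc_\delta^2\,d\H^2_\delta$ for every $\sigma>0$. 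Finally, decomposing over the dyadic annuli $B_{2^{k+1}\rho}\setminus B_{2^{k}\rho}$, which cover the compact surface $\Sigma$, and using $r\ge 2^{k}\rho$ on the $k$-th one,
\[
  \int_\Sigma r^{-p}\,d\H^2_\delta\;\le\;\sum_{k\ge0}(2^{k}\rho)^{-p}\,\H^2_\delta(\Sigma\cap B_{2^{k+1}\rho})\;\le\;4C_0\Big(\int_\Sigma\mc_\delta^2\,d\H^2_\delta\Big)\,\rho^{2-p}\sum_{k\ge0}2^{k(2-p)},
\]
and $\sum_{k\ge0}2^{k(2-p)}=(1-2^{2-p})^{-1}<\infty$ because $p>2$, which is the claimed estimate.

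The step I expect to require genuine care is the reduction to the Euclidean structure: the lemma allows arbitrary closed surfaces, so the second fundamental form of $\Sigma$ is not a priori bounded, and one must verify that the curved-versus-Euclidean discrepancy of the Willmore integrand (and the auxiliary lower-order integrals it introduces) is actually controlled by the decay hypothesis \eqref{eqn: decay assumptions integral decay} and can be absorbed --- or, alternatively, run the monotonicity argument directly in the curved metric, using the ambient curvature decay $|\mathrm{Riem}|=O(r^{-3})$ to bound the error in the monotonicity inequality over $B_\tau\setminus B_\sigma$ uniformly as $\tau\to\infty$. The monotonicity estimate and the dyadic summation are themselves routine.
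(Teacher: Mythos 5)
The Euclidean reduction is where your argument breaks, and it cannot be repaired in the form you propose. To transfer Simon's inequality you need $\int_\Sigma \mc_\delta^2\,d\H^2_\delta\lesssim\int_\Sigma\mc_g^2\,d\H^2_g$ up to absorbable errors, but Lemma~\ref{lem:comparison_second} only gives $|\mc_g-\mc_\delta|\le C\bigl(|h_g|_g/r+r^{-2}\bigr)$, so the error you must absorb contains $\rho^{-2}\int_\Sigma|h_g|^2\,d\H^2_g$. For a general closed surface this is \emph{not} controlled by the Willmore energy: by the Gauss equation and Gauss--Bonnet, $\int_\Sigma|h|^2\,d\H^2$ exceeds $\int_\Sigma\mc^2\,d\H^2$ by essentially $8\pi(\mathrm{genus}(\Sigma)-1)$, which is unbounded over the class of surfaces the lemma admits (it assumes no curvature or topology bounds whatsoever). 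This is precisely why Lemma~\ref{lem:WillmoreMonotonicityManifold}, which does transfer Simon's monotonicity to the curved setting, has to \emph{assume} the pointwise bound $|h_\Sigma(x)|\,|x|\le c$ --- an assumption unavailable here. Your fallback of running the monotonicity directly in $g$ points in the right direction but misidentifies the error source: what matters is not $|\mathrm{Riem}|=O(r^{-3})$ but the Christoffel decay $|\partial g|=O(r^{-2})$ entering $\div_\Sigma$ of the radial field, and the resulting additive errors are of the form $\int_\Sigma r^{-p-1}\,d\H^2_g$, i.e.\ instances of the very quantity being estimated, so they require a separate absorption argument that you do not supply.

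The proof the lemma actually rests on (Huisken--Yau, reproduced with boundary terms in the proof of Lemma~\ref{lem: surface_integral_decay improved}) avoids all of this. One computes, entirely in the metric $g$,
\[
\div_\Sigma\bigl(r^{1-p}\partial_r\bigr)=(2-p)\,r^{-p}+p\,r^{-p}\,g(\nu,\partial_r)^2+O(r^{-p-1}),
\]
integrates over the closed surface so that the divergence theorem produces only $\int_\Sigma\mc\, r^{1-p}g(\partial_r,\nu)\,d\H^2_g$, uses $p>2$ to make $(p-2)\int_\Sigma r^{-p}$ the good term on the left, controls the $g(\nu,\partial_r)^2$ term by the special case $p=2$, removes the $O(r^{-p-1})$ remainder using the case $p=3$ (for $r_0$ large), and applies Cauchy--Schwarz with the weight $r^{2-p}\le\rho^{2-p}$. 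No comparison of second fundamental forms and no curvature bound on $\Sigma$ enters. Your dyadic summation and the use of compactness to kill the far-field density in Simon's inequality are both fine, but they sit on top of a reduction that fails for exactly the surfaces the lemma is meant to cover.
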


We need the following extension of the previous lemma. The proof is a slight modification of the proof in \cite{Huisken-Yau:1996}:

\begin{lemma}
  \label{lem: surface_integral_decay improved}
  Let $(M,g)$ be an initial data set for which the decay assumptions \eqref{eqn: decay assumptions integral decay} hold. For  every exponent $p>2$ there exist constants $C_2$ and
  $r_0  \geq 1$ such that for every $\rho \geq r_0$ and every surface $\Sigma \subset M \setminus B_\rho$ with $\partial \Sigma \subset \partial B_{\rho}$  one has the estimate
  \begin{equation*}
    \int_{\Sigma \setminus B_{\rho}} r^{-p} d \H^2_g \leq C_2 \rho^{2-p} \left( \int_{ \Sigma \setminus B_{\rho}} H^2 d \H^2_g + \frac{ \H^1_g(\Sigma \cap \partial B_{\rho})}{\rho} \right).
  \end{equation*}
\begin{proof}
  Let $\partial_r$ be the radial vector field
  $\sum_{i=1}^3\frac{x_i}{\sqrt{x_1^2 + x_2^2 + x_3^2}} \partial_i$ in
  the asymptotically flat end. Note that for every $p \in \R$ one has
  that $\div_\Sigma \left( r^{1- p} \partial _r \right) = (2 - p)r^{-
    p} + p r^{- p} g (\nu, \partial_r)^2 + O(r^{- p -
    1})$. Integration by parts on the surface gives
  that
  \begin{multline*}
    \int_{\Sigma \setminus B_\rho} \div_\Sigma ( r^{1-
      p} \partial _r ) d \H^2_g
    \\
    = \int_{\Sigma \setminus B_\rho} H
    r^{1-p}g ( \partial _r, \nu) d \H^2_g + \int_{\Sigma \cap \partial
      B_{\rho}} r^{1-p} g(\partial _r, \eta) d \H^1_g
  \end{multline*}
  where $\eta$ is
  the co-normal of the boundary of $\Sigma \setminus
  B_{\rho}$. Specializing to $p = 2$ this implies that
  \begin{multline*}
    \int_{\Sigma \setminus B_\rho} \frac{g(\nu, \partial_r)^2}{r^2} d
    \H^2_g
    \\
    \leq C_2 \left(  \int_{ \Sigma \setminus B_{\rho}} H^2 d
      \H^2_g + \frac{ \H^1_g(\Sigma \cap \partial B_{\rho})}{\rho}
    \right) + O\left( \int_{\Sigma \setminus B_{\rho}} r^{-3} d
      \H^2_g\right).
  \end{multline*}
  Combining this with the estimate obtained for $p = 3$ one obtains that 
$$ \int_{\Sigma \setminus B_{\rho}} \frac{g(\nu, \partial_r)^2}{r^2} d \H^2_g \leq C_2 \left(  \int_{ \Sigma \setminus B_{\rho}} H^2 d \H^2_g + \frac{ \H^1_g(\Sigma \cap \partial B_{\rho})}{\rho} \right).$$ The estimate for general exponents now follows easily from this. 
\end{proof}
\end{lemma}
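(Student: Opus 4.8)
The plan is to adapt the proof of Lemma~\ref{lem:surface_integral_decay} from \cite{Huisken-Yau:1996}, retaining the boundary contribution on $\partial B_\rho$ that does not occur for closed surfaces. The engine is the radial field $\partial_r = \sum_i\frac{x_i}{r}\partial_i$ on the asymptotically flat end. A direct computation -- from $\div_{(\R^3,\delta)}\partial_r=\frac{2}{r}$, the splitting $g(\partial_r,\nu)^2+|(\partial_r)^T|^2=1$, and the decay~\eqref{eqn: decay assumptions integral decay} to pass from $\delta$ to $g$ -- yields, for every $p\in\R$,
\[
\div_\Sigma\big(r^{1-p}\partial_r\big)=(2-p)\,r^{-p}+p\,r^{-p}g(\partial_r,\nu)^2+O(r^{-p-1}).
\]
Integrating over $\Sigma\setminus B_\rho$ and invoking the first variation identity for surfaces with boundary produces, for each $p$,
\begin{multline*}
\int_{\Sigma\setminus B_\rho}\div_\Sigma\big(r^{1-p}\partial_r\big)\,d\H^2_g\\
=\int_{\Sigma\setminus B_\rho}H\,r^{1-p}g(\partial_r,\nu)\,d\H^2_g+\int_{\Sigma\cap\partial B_\rho}r^{1-p}g(\partial_r,\eta)\,d\H^1_g,
\end{multline*}
where $\eta$ is the co-normal of the boundary of $\Sigma\setminus B_\rho$. (The integrations by parts are justified by truncating at a large radius $R$ and discarding the extra boundary term on $\partial B_R$, which is negligible along a suitable sequence $R\to\infty$ -- automatically so under the quadratic area growth present in our applications.)

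The main obstacle is the term $p\,r^{-p}g(\partial_r,\nu)^2$: it has the wrong sign to be absorbed into the left-hand side, and the pointwise bound $g(\partial_r,\nu)^2\le1$ is useless. The heart of the proof is therefore to establish
\[
\int_{\Sigma\setminus B_\rho}\frac{g(\partial_r,\nu)^2}{r^2}\,d\H^2_g\le C_2\Big(\int_{\Sigma\setminus B_\rho}H^2\,d\H^2_g+\frac{\H^1_g(\Sigma\cap\partial B_\rho)}{\rho}\Big).
\]
I would prove this by a two-step bootstrap. Specializing the displayed identity to $p=2$ removes the $r^{-p}$ term, and Young's inequality applied to the $H$-term absorbs part of $\int r^{-2}g(\partial_r,\nu)^2$ back into the left, leaving -- besides $\int H^2$ and $\rho^{-1}\H^1_g(\Sigma\cap\partial B_\rho)$ -- only the error $O(\int_{\Sigma\setminus B_\rho}r^{-3})$. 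Specializing the identity to $p=3$ expresses $\int_{\Sigma\setminus B_\rho}r^{-3}$ as a combination of $\int r^{-3}g(\partial_r,\nu)^2\le\rho^{-1}\int r^{-2}g(\partial_r,\nu)^2$, the $H$-term (Young again, now with a $\rho$-dependent weight), the boundary term, and a further error $O(\int r^{-4})\le\rho^{-1}O(\int r^{-3})$ that is absorbed once $\rho\ge r_0$. Feeding the resulting bound for $\int r^{-3}$ back into the $p=2$ inequality and taking $r_0$ large enough to absorb the remaining small multiple of $\int r^{-2}g(\partial_r,\nu)^2$ gives the displayed estimate.

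Given this, the general case $p>2$ is routine. In the identity for the prescribed $p$ one bounds $p\int r^{-p}g(\partial_r,\nu)^2\le p\,\rho^{2-p}\int r^{-2}g(\partial_r,\nu)^2$ by the estimate just proved; the boundary term by $C\rho^{1-p}\H^1_g(\Sigma\cap\partial B_\rho)=C\rho^{2-p}\cdot\rho^{-1}\H^1_g(\Sigma\cap\partial B_\rho)$; the $H$-term $|H|\,r^{1-p}|g(\partial_r,\nu)|$ by Young's inequality into a multiple of $\rho^{2-p}H^2$ plus a multiple of $r^{-p}g(\partial_r,\nu)^2$ (using $r\ge\rho$ and $p>2$, so that $r^{2-p}\le\rho^{2-p}$, and then the estimate once more); and the error $O(\int r^{-p-1})\le\rho^{-1}O(\int r^{-p})$, which is absorbed into the positive left-hand side $(p-2)\int_{\Sigma\setminus B_\rho}r^{-p}$ once $\rho\ge r_0$. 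Rearranging gives the claim, with $C_2$ and $r_0$ depending only on $p$ and the constant $C$ in \eqref{eqn: decay assumptions integral decay}. Apart from the bootstrap for the key estimate, I expect the only delicate points to be bookkeeping -- organizing the absorptions so that no constant depends on $\Sigma$ -- and the technical justification of the truncation needed to integrate by parts on a non-compact surface.
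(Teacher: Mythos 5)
Your proposal follows essentially the same route as the paper: the divergence identity for $r^{1-p}\partial_r$, integration by parts retaining the co-normal boundary term on $\partial B_\rho$, the key estimate on $\int_{\Sigma\setminus B_\rho} r^{-2}g(\partial_r,\nu)^2\,d\H^2_g$ obtained by combining the $p=2$ and $p=3$ cases, and then absorption into the $(p-2)\int r^{-p}$ term for general $p>2$. The argument is correct and matches the paper's proof in all essentials.
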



\section{Bending energy and area growth} \label{sec:Willmore_Appendix}

Here we modify the proofs of some results in \cite{Simon:1993} so we can apply them in a manifold setting:

\begin{lemma} \label{lem:WillmoreAreaBound} Let $(M, g)$ be an initial
  data set. There exists $r_0\geq1$ depending only on $(M, g)$ such
  that for all $\rho \geq r \geq r_0$ and every bounded surface
  $\Sigma \subset B_\rho \setminus B_r$ with $\partial \Sigma \subset \partial B_r$ one has that  $\H^2_g(\Sigma) \leq 4 \rho^2 \int_{\Sigma} \mc^2 d \H^2_g + 4 r \H_g^1(\partial \Sigma)$.

\begin{proof}  We adapt the argument in \cite[bottom of p. 285]{Simon:1993}. Let $X := \sum_{i=1}^3 x_i \partial_i$ be the position vector field, and choose $r_0$ so large that $g(X, X)|_x \leq 4 |x|^2$ and that the $g$-trace of $\nabla_g X$ over any $2$-dimensional subspace of $T_x M$ is at least $1$ provided that $|x| \geq r_0$. Then for $r \geq r_0$ one has that
\begin{eqnarray*}
\H_g^2(\Sigma) \leq \int_{\Sigma} \div_\Sigma(X) d \H^2_g = \int_{\Sigma} \mc (X, \nu) d \H^2_g + \int_{\partial \Sigma} (X, \eta) d \H^1_g
\end{eqnarray*} where $\eta$ is the outward pointing normal of $\partial \Sigma$. Hence $$\H^2_g(\Sigma) \leq 2 \rho \H^2_g(\Sigma)^{\frac{1}{2}} \left( \int_{\Sigma } \mc^2 d \H^2_g \right)^{\frac{1}{2}} + 2 r \H^1_g(\partial \Sigma)$$ and the claim follows. 

\end{proof}
\end{lemma}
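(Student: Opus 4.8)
The plan is to run the first-variation (tangential divergence) identity on $\Sigma$ against the Euclidean position vector field and then absorb a quadratic term. Write $X := \sum_{i=1}^3 x_i \partial_i$ for the position vector field in the asymptotically flat chart. On any two-dimensional surface the flat tangential divergence $\div_\Sigma X$ (computed with respect to $\delta$) is identically $2$; the decay hypothesis $r|g_{ij}-\delta_{ij}| + r^2|\partial_k g_{ij}| \leq C$ then guarantees that there is $r_0 \geq 1$, depending only on $(M,g)$, such that for $|x| \geq r_0$ the $g$-trace of $\nabla_g X$ over any $2$-plane in $T_x M$ is at least $1$ and $g(X,X)|_x \leq 4|x|^2$. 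This is the one place where the argument uses anything about the ambient geometry, and it is purely a matter of the $\mathcal{C}^1$-closeness of $g$ to $\delta$ near infinity; I would regard this bookkeeping as the (minor) ``main obstacle''.

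Next I would apply the tangential divergence theorem for a surface with boundary, $\int_\Sigma \div_\Sigma X \, d\H^2_g = \int_\Sigma \mc\, g(X,\nu)\, d\H^2_g + \int_{\partial\Sigma} g(X,\eta)\, d\H^1_g$, where $\nu$ is the unit normal of $\Sigma$ and $\eta$ the outward unit conormal of $\partial\Sigma$. Since $r_0 \leq r \leq |x| \leq \rho$ on $\Sigma$ and $|x| = r$ on $\partial\Sigma \subset \partial B_r$, the first step gives $\int_\Sigma \div_\Sigma X \, d\H^2_g \geq \H^2_g(\Sigma)$, while $|X|_g \leq 2\rho$ on $\Sigma$ and $|X|_g \leq 2r$ on $\partial\Sigma$. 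Together with the Cauchy--Schwarz inequality $\int_\Sigma |\mc|\, d\H^2_g \leq \H^2_g(\Sigma)^{1/2}\big(\int_\Sigma \mc^2\, d\H^2_g\big)^{1/2}$, this yields $\H^2_g(\Sigma) \leq 2\rho\, \H^2_g(\Sigma)^{1/2}\big(\int_\Sigma \mc^2\, d\H^2_g\big)^{1/2} + 2r\, \H^1_g(\partial\Sigma)$.

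Finally I would close the estimate by Young's inequality: bound $2\rho\, \H^2_g(\Sigma)^{1/2}\big(\int_\Sigma \mc^2\, d\H^2_g\big)^{1/2} \leq \tfrac12 \H^2_g(\Sigma) + 2\rho^2 \int_\Sigma \mc^2\, d\H^2_g$, absorb the first term on the right into the left-hand side, and rearrange to obtain $\H^2_g(\Sigma) \leq 4\rho^2 \int_\Sigma \mc^2\, d\H^2_g + 4r\, \H^1_g(\partial\Sigma)$. This is precisely the manifold-with-boundary version of the area bound at the bottom of p.~285 of \cite{Simon:1993}, and no genuinely new idea beyond that computation is needed.
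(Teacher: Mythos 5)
Your argument is correct and is essentially identical to the paper's own proof: both use the first-variation identity for the position vector field $X=\sum_i x_i\partial_i$, the $\mathcal{C}^1$-closeness of $g$ to $\delta$ to get $\operatorname{div}_\Sigma X\geq 1$ and $g(X,X)\leq 4|x|^2$ for $|x|\geq r_0$, Cauchy--Schwarz on the mean curvature term, and absorption of $\tfrac12\H^2_g(\Sigma)$ to reach the stated constants. The only difference is that you spell out the final Young's-inequality absorption which the paper leaves as ``the claim follows.''
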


\begin{lemma} Let $r > 0$ and let $\Sigma \subset \R^3 \setminus B_r$ be a surface with $\partial \Sigma \subset \partial B_r$. There exists a universal constant $C_3$ such that for $r \leq \sigma < \rho < \infty$ one has that 
\begin{eqnarray*}
\frac{\H^2_\delta (\Sigma \cap B_\sigma )}{ \sigma^2} \leq C_3 \left( \frac{\H^2_\delta (\Sigma \cap B_\rho)}{ \rho^2} + \int_{\Sigma \cap B_\rho } \mc_\delta^2 d \H^2_\delta + \frac{\H^1_\delta (\partial \Sigma)}{\sigma}\right)
\end{eqnarray*}
\begin{proof}
This is a simple extension of the proof of inequality $(1.3)$ in \cite{Simon:1993} to the case where the surface has an \emph{inner} boundary. 
\end{proof}
\end{lemma}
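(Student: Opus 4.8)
The final statement is a scaling-type lemma for a surface $\Sigma \subset \R^3 \setminus B_r$ with inner boundary $\partial \Sigma \subset \partial B_r$, bounding the Euclidean area ratio at scale $\sigma$ by the area ratio at a larger scale $\rho$, plus a bending-energy term and a boundary term. This is Simon's monotonicity-type estimate (1.3) extended to allow an inner boundary.

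The plan is to run the standard first-variation/monotonicity computation with the position vector field $X = \sum_i x_i \partial_i$, exactly as in the proof of Lemma \ref{lem:WillmoreAreaBound}, but now keeping careful track of scales and the inner-boundary contribution on $\partial B_r$. First I would record the first variation identity: for $r \le s < \infty$, testing $\operatorname{div}_\Sigma X$ against the characteristic function of $\Sigma \cap B_s$ (and using the coarea formula to differentiate in $s$), one obtains the differential inequality relating $s^{-2}\H^2_\delta(\Sigma \cap B_s)$, the term $\int_{\Sigma \cap B_s} \mc_\delta \langle X, \nu\rangle$, the term $\int_{\Sigma \cap \partial B_s} \langle X^\perp \rangle$-type boundary quantities on the \emph{outer} sphere $\partial B_s$, and the fixed contribution $\int_{\partial \Sigma}\langle X, \eta\rangle$ on $\partial B_r$. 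The key algebraic point, as in Simon, is that $\operatorname{div}_\Sigma X = 2$ and that the outer boundary terms on $\partial B_s$ combine with $\tfrac{d}{ds}$ of the density to give a term involving $|D^\perp |x||^2/|x|^2 \ge 0$, which one throws away in the favorable direction; the $\partial \Sigma$ term contributes at most $r\,\H^1_\delta(\partial\Sigma) \le s\,\H^1_\delta(\partial\Sigma)$ in absolute value, which accounts for the $\H^1_\delta(\partial\Sigma)/\sigma$ term after dividing by $s^2$.

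Concretely, I would show that for a.e. $s \in (r, \rho)$,
\begin{equation*}
\frac{d}{ds}\left( \frac{\H^2_\delta(\Sigma \cap B_s)}{s^2}\right) \ge -\,\frac{1}{s^2}\int_{\Sigma\cap B_s}|\mc_\delta|\,d\H^2_\delta \;-\; \frac{r\,\H^1_\delta(\partial\Sigma)}{s^3},
\end{equation*}
using $|\langle X,\nu\rangle| \le |X| = |x| \le s$ on $\Sigma \cap B_s$ and $|\langle X,\eta\rangle| \le r$ on $\partial\Sigma$. Integrating from $\sigma$ to $\rho$ and using $\int_r^\rho s^{-3}\,ds \le \tfrac12\sigma^{-2}$ gives
\begin{equation*}
\frac{\H^2_\delta(\Sigma\cap B_\sigma)}{\sigma^2} \le \frac{\H^2_\delta(\Sigma\cap B_\rho)}{\rho^2} + \int_\sigma^\rho \frac{1}{s^2}\left(\int_{\Sigma\cap B_s}|\mc_\delta|\,d\H^2_\delta\right)ds + \tfrac12\,\frac{r\,\H^1_\delta(\partial\Sigma)}{\sigma^2}.
\end{equation*}
The middle term I would handle by Cauchy--Schwarz: $\int_{\Sigma\cap B_s}|\mc_\delta| \le \H^2_\delta(\Sigma\cap B_s)^{1/2}\,(\int_{\Sigma\cap B_\rho}\mc_\delta^2)^{1/2}$, then absorb the resulting $\H^2_\delta(\Sigma\cap B_s)^{1/2}/s^2$ factor using the density monotonicity already established and a Young's-inequality absorption argument (this is the standard Simon trick of iterating the weak monotonicity to upgrade $L^1$ control of $\mc$ to the $L^2$ form and to close the estimate with the area ratio at scale $\rho$). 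Since $r \le \sigma$, the final boundary term is $\le \H^1_\delta(\partial\Sigma)/\sigma$ up to the universal constant, matching the claimed form.

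The main obstacle is the absorption step: closing the inequality so that the area ratio at scale $\sigma$ is controlled by the area ratio at scale $\rho$ (rather than by itself) requires the iteration/bootstrapping argument from \cite{Simon:1993}, and one must check that the inner boundary term does not interfere with that iteration — it doesn't, because it is monotone in $s$ in the right way and its scale-invariant size $r\,\H^1_\delta(\partial\Sigma)/s^2 \le \H^1_\delta(\partial\Sigma)/\sigma$ is controlled uniformly once $s \ge \sigma \ge r$. Everything else is a routine transcription of Simon's argument, with the outer-sphere boundary terms handled exactly as in \cite[(17.3)]{GMT} and the inner-sphere term as an additional, harmless error of the claimed order.
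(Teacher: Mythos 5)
Your overall plan --- the first variation of area with a radial vector field, carrying along the extra conormal term on the inner boundary $\partial\Sigma\subset\partial B_r$ --- is exactly the route the paper intends: its ``proof'' is the one-line remark that one repeats Simon's derivation of (1.3) while keeping the term $\int_{\partial\Sigma}\langle X,\eta\rangle\,d\H^1_\delta$, which for the relevant vector field is of size $\H^1_\delta(\partial\Sigma)/\sigma$ once $r\le\sigma$. Your bookkeeping of the inner boundary is correct and is indeed harmless.

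The gap is in the absorption step for the mean curvature term, and it is not a routine detail. After integrating your differential inequality over $s\in(\sigma,\rho)$ you must control $\int_\sigma^\rho s^{-2}\left(\int_{\Sigma\cap B_s}|\mc_\delta|\,d\H^2_\delta\right)ds$. Exchanging the order of integration this is essentially $\int_{\Sigma\cap B_\rho}|\mc_\delta|\max(\sigma,|x|)^{-1}\,d\H^2_\delta$, and Cauchy--Schwarz gives $\left(\int_{\Sigma\cap B_\rho}\mc_\delta^2\,d\H^2_\delta\right)^{1/2}\left(\int_{\Sigma\cap B_\rho}\max(\sigma,|x|)^{-2}\,d\H^2_\delta\right)^{1/2}$. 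The second factor is of order $\log(\rho/\sigma)\,\sup_{s\in[\sigma,\rho]}s^{-2}\H^2_\delta(\Sigma\cap B_s)$; alternatively, your cruder bound $\H^2_\delta(\Sigma\cap B_s)^{1/2}\le\H^2_\delta(\Sigma\cap B_\rho)^{1/2}$ yields, after Young's inequality, a term $\eps\,\sigma^{-2}\H^2_\delta(\Sigma\cap B_\rho)$, which exceeds the allowed $\rho^{-2}\H^2_\delta(\Sigma\cap B_\rho)$ by the unbounded factor $(\rho/\sigma)^2$. Neither a single Young absorption nor a dyadic iteration closes this: iterating over dyadic annuli multiplies the constant at each step and again loses a power of $\rho/\sigma$. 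The ``Simon trick'' is not an iteration of the weak monotonicity; it is to avoid integrating the cross term over all scales. One applies the first variation with $X=\left(\max(|x|,\sigma)^{-2}-\rho^{-2}\right)x$ (equivalently, completes the square with $\bigl|\tfrac12\mc_\delta\nu+|x|^{-2}x^\perp\bigr|^2$), which produces the exact monotonicity of $\gamma(s)=s^{-2}\H^2_\delta(\Sigma\cap B_s)+\tfrac1{16}\int_{\Sigma\cap B_s}\mc_\delta^2\,d\H^2_\delta+\tfrac12 s^{-2}\int_{\Sigma\cap B_s}\mc_\delta\langle x,\nu\rangle\,d\H^2_\delta$ up to the fixed inner-boundary contribution. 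The cross term then appears only at the two endpoint radii $s=\sigma,\rho$, where $\bigl|s^{-2}\int_{\Sigma\cap B_s}\mc_\delta\langle x,\nu\rangle\,d\H^2_\delta\bigr|\le\tfrac14 s^{-2}\H^2_\delta(\Sigma\cap B_s)+\int_{\Sigma\cap B_s}\mc_\delta^2\,d\H^2_\delta$ lets you absorb it into the left side at $\sigma$ and into the right side at $\rho$. With that replacement, and your inner-boundary term riding along unchanged, the lemma follows with a universal constant.
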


\begin{lemma} \label{lem:WillmoreMonotonicityManifold} Let $(M, g)$ be an initial data set. There exists a constant $r_0 \geq 1$ such that the following holds: For $r_0 \leq \sigma < \rho$ and every bounded surface $\Sigma \subset M \setminus B_{\sigma}$ with $\partial \Sigma \subset \partial B_\sigma$ and such that $\sup_{x \in \Sigma}|h_\Sigma(x)| |x| \leq c$ one has that 
\begin{eqnarray*} 
\frac{\H^2_g (\Sigma \cap B_\sigma)}{ \sigma^2} \leq C_4 \left( \frac{\H^2_g (\Sigma \cap B_\rho)}{ \rho^2} + \int_{\Sigma} \mc_g^2 d \H^2_g + \frac{\H^1_g (\partial \Sigma)}{\sigma} \right)
\end{eqnarray*}
where $C_4$ depends only on $(M, g)$ and $c$. 
\begin{proof}
For sufficiently large $r_0$ the curved Hausdorff measures are equivalent to the Euclidean ones on $M \setminus B_{r_0}$. We know that $|\mc_\delta - \mc_g| \lesssim  \frac{|x| |h_\Sigma| +1  }{|x|^2}$ so that 
\begin{eqnarray*} \int_{\Sigma \cap B_\rho } \mc_\delta^2 d \H^2 _\delta \sim \int_{\Sigma \cap B_\rho} \mc_\delta^2 d \H^2 _g  \lesssim   \int_{\Sigma} \mc_g^2 d \H^2 _g + \int_{\Sigma} |x|^{-4} d \H^2 _g. 
\end{eqnarray*} 
Moreover, $ \int_{\Sigma} |x|^{-4} d \H^2 _g \lesssim \sigma^{-2} \left( \int_{\Sigma} \mc_g^2 d \H^2_g + \frac{\H^1_g(\partial \Sigma)}{\sigma}\right)$ by Lemma \ref{lem: surface_integral_decay improved}. 
\end{proof}
\end{lemma}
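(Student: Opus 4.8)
The final statement is Lemma \ref{lem:WillmoreMonotonicityManifold}, the manifold version of the monotonicity-type estimate for surfaces with an inner boundary.

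=== PROOF PROPOSAL ===

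The plan is to reduce the curved estimate to the Euclidean one established in the preceding lemma by exploiting that, for $r_0$ large, the two Hausdorff measures and the two mean curvature scalars are uniformly comparable on $M \setminus B_{r_0}$. First I would fix $r_0$ so large that $\tfrac12 \H^2_\delta \leq \H^2_g \leq 2 \H^2_\delta$ (and similarly for $\H^1$) on $M \setminus B_{r_0}$; this is possible by the asymptotic flatness assumption \eqref{eqn: decay assumptions integral decay}. Next I would invoke Lemma \ref{lem:comparison_second} to get the pointwise comparison $|\mc_\delta - \mc_g| \lesssim \frac{|x||h_\Sigma| + 1}{|x|^2} \lesssim \frac{1}{|x|^2}$ on $\Sigma$, using the hypothesis $\sup_{x\in\Sigma}|h_\Sigma(x)||x| \leq c$; squaring and using $(a+b)^2 \leq 2a^2 + 2b^2$ gives $\mc_\delta^2 \lesssim \mc_g^2 + |x|^{-4}$ pointwise. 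Integrating over $\Sigma \cap B_\rho$ and switching back to the $g$-measure then yields $\int_{\Sigma \cap B_\rho} \mc_\delta^2 \,d\H^2_\delta \lesssim \int_\Sigma \mc_g^2 \,d\H^2_g + \int_\Sigma |x|^{-4}\,d\H^2_g$.

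The key remaining point is to absorb the error term $\int_\Sigma |x|^{-4}\,d\H^2_g$. Here I would apply Lemma \ref{lem: surface_integral_decay improved} with exponent $p = 4 > 2$ and $\rho$ there taken to be $\sigma$, which gives $\int_{\Sigma \setminus B_\sigma} |x|^{-4}\,d\H^2_g \lesssim \sigma^{-2}\bigl(\int_\Sigma \mc_g^2\,d\H^2_g + \sigma^{-1}\H^1_g(\partial\Sigma)\bigr)$, exactly the shape of term that fits into the right-hand side of the desired inequality. (Since $\Sigma \subset M \setminus B_\sigma$ and $\partial \Sigma \subset \partial B_\sigma$, the hypotheses of that lemma are met once $\sigma \geq r_0$.) Combining this with the Euclidean inequality from the previous lemma — applied with $\H^1_\delta(\partial\Sigma) \lesssim \H^1_g(\partial\Sigma)$ — and converting all Euclidean area integrals back to curved ones via the measure equivalence produces the claimed bound with a constant $C_4$ depending only on $(M,g)$ and $c$.

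I do not anticipate a serious obstacle: every ingredient (measure equivalence from asymptotic flatness, the second-fundamental-form comparison Lemma \ref{lem:comparison_second}, the Euclidean monotonicity-type estimate with inner boundary, and the integral decay Lemma \ref{lem: surface_integral_decay improved}) is already available, and the argument is just a careful bookkeeping of comparison constants. The only mild subtlety is making sure that enlarging $r_0$ to accommodate each of these comparisons simultaneously is harmless, which it is since each requirement is of the form ``valid for all sufficiently large radii.''
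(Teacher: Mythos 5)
Your proposal is correct and follows essentially the same route as the paper: equivalence of the curved and Euclidean Hausdorff measures for large $r_0$, the pointwise comparison $|\mc_\delta - \mc_g| \lesssim (|x||h_\Sigma|+1)/|x|^2$ from Lemma \ref{lem:comparison_second} combined with the hypothesis $\sup_{x\in\Sigma}|h_\Sigma(x)||x|\leq c$, absorption of the error term $\int_\Sigma |x|^{-4}\,d\H^2_g$ via Lemma \ref{lem: surface_integral_decay improved}, and reduction to the Euclidean inner-boundary estimate of the preceding lemma. No gaps.
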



\bibliographystyle{amsplain}
\bibliography{references}

\end{document}